\newtheorem{thm}{Theorem}[section]
\newtheorem{theorem}[thm]{Theorem}
\newtheorem{cor}[thm]{Corollary}
\newtheorem{lem}[thm]{Lemma}
\newtheorem{prop}[thm]{Proposition}
\newtheorem{defn}[thm]{Definition}
\newtheorem{rem}[thm]{Remark}
\newcommand{\mb}{\mathbf}
\newcommand{\mc}{\mathcal}
\newcommand{\mf}{\mathfrak}
\newcommand{\de}{\textup{fdeg}}
\newcommand{\Lb}{\mathbf{L}}
\newcommand{\ep}{\epsilon}
\begin{document}
\title[Spectral correspondences]
{Spectral correspondences for affine Hecke algebras}
\author{Eric Opdam}
\address{Korteweg de Vries Institute for Mathematics\\
University of Amsterdam\\
P.O. Box 94248\\
1090 GE Amsterdam\\ 
The Netherlands\\
email: e.m.opdam@uva.nl}
\date{\today}
\keywords{Affine Hecke algebra, Plancherel measure}
\subjclass[2000]{Primary 20C08; Secondary 22D25, 43A30}
\thanks{This research was supported  by 
ERC-advanced grant no. 268105. It is a pleasure to thank 
Joseph Bernstein, Dan Ciubotaru and Mark Reeder for useful discussions and 
comments.}
\begin{abstract}We introduce the notion of spectral transfer morphisms 
between normalized affine Hecke algebras, and show that such morphisms 
induce 
spectral measure preserving correspondences on the level of the tempered 
spectra of the affine Hecke algebras involved. We define a partial ordering 
on the set of isomorphism classes of normalized affine Hecke algebras, which 
plays an important role for the Langlands parameters of Lusztig's unipotent 
representations.  
\end{abstract}
\maketitle
\tableofcontents
\section{Introduction}\label{sec:intro}
In this paper we introduce the notion of ``spectral transfer morphisms" between 
normalized affine Hecke algebras. 
We prove that such a spectral transfer morphism $\phi:\mc{H}_1\leadsto\mc{H}_2$ induces 
a finite morphism $\phi_Z$ from the affine variety defined by the center $\mc{Z}_1$ of $\mc{H}_1$ 
to the affine variety defined by the center $\mc{Z}_2$ of $\mc{H}_2$.
Using this map we will construct a correspondence between the tempered spectra of the affine 
Hecke 
algebras $\mc{H}_1$ and $\mc{H}_2$ which respects the connected components in the tempered 
spectra  of $\mc{H}_1$ and $\mc{H}_2$ and which respects the spectral measures 
up to rational constants and finite maps. By this we mean the following: If 
$\mf{S}_1$ is a connected component of the tempered spectrum of $\mc{H}_1$, then 
there exists a connected component $\mf{S}_2$ of the tempered spectrum of $\mc{H}_2$, 
a connected space $\mf{S}_{12}$ with 
finite surjective continuous maps $p_1:\mf{S}_{12}\to\mf{S}_1$ and $p_2:\mf{S}_{12}\to\mf{S}_2$, 
and a positive measure $\nu_{12}$ on $\mf{S}_{12}$ whose push forward to $\mf{S}_i$
yields the spectral measure of $\mc{H}_i$ restricted to $\mf{S}_i$, up to a rational constant 
factor depending on $\mf{S}_1$ and $\mf{S}_2$.

In \cite{Opd4} we apply this notion to unipotent affine Hecke algebras, i.e. to the normalized affine 
Hecke algebras associated to the unipotent types of inner forms of a given absolutely simple quasi split 
linear algebraic group $G$ defined over a non-archimedian local field, all normalized in 
an appropriate way. We will prove in that paper 
that all unipotent normalized affine Hecke algebras admit an 
essentially unique spectral transfer morphism to the Iwahori Hecke algebra of $G$, and that 
these maps are exactly the same as Lusztig's geometric-arithmetic correspondences \cite{Lu4,Lu6}. 
One could interpret this remarkable fact as an amplification of Mark Reeder's 
work on formal degrees of unipotent discrete series representations and L-packets for split 
exceptional groups \cite{Re} and small rank classical groups \cite{Re0}.

Thus Lusztig's geometric-arithmetic correspondences can be recovered 
from the perspective of harmonic analysis alone, and this is in itself a 
motivation for studying this notion of spectral transfer morphisms. 
As an application we will give a proof in \cite{Opd4} of a conjecture of K. Hiraga, A. Ichino 
and T. Ikeda \cite[Conjecture 1.4]{HII} on the formal degree of a discrete series representation $\pi$ 
of $G(k)$,  where $G$ is a connected reductive group over a local field $k$, for unipotent 
discrete series representations.
This conjecture expresses the formal degree in terms of the adjoint gamma 
factor of the conjectural Langlands parameters of $\pi$. 

It is well known \cite{Mo1,Mo2,Lu4,Lu6} that, if $k$ is non-archimedean,  
the unipotent Bernstein components \cite{Ber} of the category of smooth representations of $G(k)$ 
are Morita equivalent to the module category of an affine Hecke algebra, in general with 
unequal parameters. 
These Morita equivalences 
are realized by a type in the sense of \cite{BK1}, and in general, such Morita equivalences are 
known to respect the notion of temperedness and are Plancherel measure 
preserving \cite{BHK}. For more general Bernstein components one still expects variations 
of such descriptions to hold (see e.g. \cite{Yu}). An alternative apporoach to such results, suggested 
by J. Bernstein and established for classical groups by Heiermann \cite{Hei} , is to directly establish 
an isomorphism between the opposite endomorphism ring of a 
progenerator $\pi_\mc{O}$ of a Bernstein component $\mc{O}$ and an affine Hecke algebra (or a small 
variations thereof). 
In the latter situation there is not yet a complete understanding of the correspondences 
on the level of harmonic analysis.
In both approaches,  
the corresponding Bernstein variety is equipped with a rational function 
which has its origin in harmonic analysis: the $\mu$-function. By the work of Heiermann 
\cite{Hei1,Hei2}, it is known that the central support of the set of tempered representation in a 
Bernstein component can be completely described in terms of this $\mu$-function in analogy 
to the results of \cite{Opd1,Opd3} for affine Hecke algebras.

In recent years, from various perspectives, the representation 
theory and harmonic analysis of unequal parameter affine Hecke 
algebras has seen progress \cite{HOH,Opd1,OpdSol2,Sol2,CK,CKK,Lu6,COT,CiuOpd1,CiuOpd2}.  
The spectral transfer morphisms introduced in the present paper are defined in entirely 
terms of the $\mu$-functions of the affine Hecke algebras, using the properties of its poles \cite{Opd3}. 
It is therefore likely that spectral transfer morphism can also be defined for more general Bernstein 
components than the unipotent ones, without reference to an affine Hecke algebra.
In any case, in view of the natural role of (unequal parameter) affine Hecke algebras in the representation 
theory of reductive $p$-adic groups sketched above, it is a natural quest to understand how their irreducible 
spectra are combined into stable packets. 
Using the notion of spectral transfer morphisms we will show in this paper (together with 
\cite{Opd4} and \cite{FO}) that if such partitioning 
exists for tempered unipotent representations, then it is essentially uniquely determined by certain 
natural algebraic relations between 
the $\mu$-functions of the unipotent affine Hecke algebras, and that this yields  
the same partitioning as provided by Lusztig \cite{Lu4,Lu6}.

We will introduce a natural \emph{partial ordering} between the spectral isogeny classes of normalized affine Hecke algebras 
with respect to spectral transfer maps. For the normalized affine Hecke algebras which appear in the study of 
Lusztig's unipotent representations, we will in fact see that the spectral isogeny classes in this sense are the same as 
the isomorphism classes. This ordering gives a special role to the Iwahori Hecke algebra $\mc{H}^{IM}(G)$ of 
an unramified quasi-split semisimple group $G$ (defined over a nonarchimedean local field): 
Among the isomorphism classes of the unipotent affine Hecke 
algebras of inner forms of $G$, (the isomorphism class of) \emph{$\mc{H}^{IM}(G)$ is the least element.}
In view of the remarks above, one may expect that this ordering plays a role more generally for 
components of the Bernstein variety of $G$.

The aim of the present paper is the study of the Plancherel measure preserving correspondences associated 
to a spectral transfer morphism, using our knowledge \cite{Opd1,OpdSol2,Opd3} of the properties 
of the $\mu$-function and its relations to the Plancherel measure. 
\section{Affine Hecke algebras}\label{sub:affhecke}
We shall mainly follow the conventions of \cite{OpdSol2}. 
However, we will restrict ourselves in the present paper to (generic) affine Hecke algebras whose 
parameters are unequal but which are all integral powers of a fixed invertible indeterminate $v$.
Therefore certain notions and definitions of \cite{OpdSol2} require some small adaptations
(e.g. the notion of the ``spectral diagram" of the Hecke algebra). 
\subsection{Basic notions and conventions}
\subsubsection{Root data and affine Weyl groups}\label{sub:rootdata}
Recall the notion of a based, semisimple root datum
$\mc{R}=(X,R_0,Y,R_0^\vee,F_0)$, consisting of a semisimple root datum
with a given basis $F_0\subset R_0$ of simple roots.
Let $W_0=W(R_0)$ be the Weyl
group of the root system $R_0\subset X$, acting on the lattice $X$, and
let $W=W(\mc{R}):=X\rtimes W_0$ denote the associated extended affine Weyl group.
We equip the real vector space $V^*:=X\otimes \mathbb{R}$ with a Euclidian
structure which is $W_0$-invariant.
\subsubsection{Affine root system and the affine Coxeter group $W^a$}
\label{par:affroot}
The set $R=R_0^\vee+\mathbb{Z}\subset Y+\mathbb{Z}$ is an affine root
system whose associated affine Coxeter group $W^a=W(R)\approx Q(R_0)\rtimes W_0$
acts naturally on $X$. Then $W^a$ is a normal subgroup of $W$.
The basis $F_0$ of simple roots of $R_0$ determines a corresponding set
of simple affine roots denoted by $F\subset R$.
If $S\subset W^a$ is the corresponding set of affine simple reflections
then $(W^a,S)$ is an affine Coxeter group with Coxeter generators $S$.
The Coxeter group $(W^a,S)$ is determined by the Coxeter graph
$X(F)$ of $(W^a,F)$. Since we are assuming that $(W^a,S)$ is an
affine Coxeter group (i.e. all connected components of $X(F)$ are
irreducible affine Coxeter graphs) it is clear by the classification
of irreducible affine Coxeter graphs that $X(F)$ determines a unique
untwisted affine Dynkin diagram $D(F)$ (whose nodes are identified
with the set $F$).
\subsubsection{$W$ as extended Coxeter group}\label{sub:extcox}
Let $C\subset V^*$ be the fundamental alcove
$C=\{\lambda\in V^*\mid \forall\, a\in F: a(\lambda)>0\}$.
The closure $\overline{C}$ of $C$ is a fundamental domain for the
action of $W^a$ on $V^*$. Then $S$ is the set of isometric
affine hyperplane reflections in the faces of codimension $1$ of
$C$. We introduce the finite subgroup
$\Omega_X=\{\omega\in W\mid \omega(C)=C\}\subset W$.
Obviously $\Omega_X$ acts on $D(F)$ by diagram automorphisms.
Hence the natural action of $\Omega_X$ on $S$ extends to an
action of $\Omega_X$ on $W^a$ by automorphisms. It is clear that
we have $W=W^a\rtimes \Omega_X$. It follows easily that
$\Omega_X\approx W/W^a$ is isomorphic to the finitely generated
abelian group $\Omega_X=X/Q(R_0)$, where $Q(R_0)$ denotes the root lattice
of the root system $R_0$.
\subsubsection{Length function} The canonical length function $l$
on the Coxeter group $(W^a,S)$ can be extended uniquely to a length
function $l$ on the extended affine Coxeter group $W$ such that
$\Omega_X$ is the set of elements of length zero. Thus $W$ is generated
be its distinguished set $S$ of simple generators (of length $1$)
and its subgroup group $\Omega_X$ of elements of length zero.
\subsubsection{Generic affine Hecke algebra}\label{sub:genaffhecke}
We introduce the ring
$\Lambda=\mathbb{C}[v(s)^{\pm 1};s\in S]$ of Laurent polynomials
in invertible commuting indeterminates $v(s)$ (with $s\in S$) subject to the
relations $v(s)=v(s^\prime)$ iff $s$ and $s^\prime$ are conjugate
in $W$. The maximal spectrum of $\Lambda$ is a complex
algebraic torus denoted by $\mc{Q}_c$ in \cite{OpdSol2}
(we reserve the notation $\mc{Q}$ for the subset
$\mc{Q}\subset\mc{Q}_c$ of positive (or infinitesimally
real) points of $\mc{Q}_c$).

Let $\Lb=\mathbb{C}[v^{\pm 1}]$ denote be the ring of regular
functions on $\mathbb{C}^\times$, and let
$m:\mathbb{C}^\times\to\mc{Q}_c$ be a cocharacter. Then $m$ is
given by a collection of integers $m_S(s)\in\mathbb{Z}$ ($s\in S$)
defined on the set of $W$-conjugacy classes meeting $S$
by $m^*(v(s))=v^{m_S(s)}$ for all $s\in S$. Thus the collection
of such cocharacters $m$ is in natural bijection with the set of
$W$-invariant functions $m_R$ on $R$ with values in $\mathbb{Z}$,
where $m$ and $m_R$ correspond iff $m_R(a)=m(s_a)$ for all $a\in F$.
We consider $\Lb$ as a $\Lambda$ algebra via the
homomorphism $m^*:\Lambda\to\Lb$.

In \cite{OpdSol2} the generic affine Hecke algebra $\mc{H}_\Lambda(\mc{R})$
was defined (see below).
In the present paper the basic objects of study are Hecke
algebras with coefficient ring $\Lb$, obtained by
specialization from $\Lambda$ via a cocharacter $m$.
\begin{defn}\label{def:def}
Let $\mc{R}$ be a based affine root datum and
let $m:\mathbb{C}^\times\to\mc{Q}_c$ be a cocharacter as above.
We associated with these data the generic affine Hecke algebra
$\mc{H}(\mc{R},m)$ over $\mb{L}$ as follows.
Recall
that $\mc{H}_\Lambda(\mc{R})$ is the unique unital, associative,
$\Lambda$-algebra with distinguished $\Lambda$-basis $\{N_w\}_{w\in W}$
parametrized by $w\in W$, satisfying the relations
\begin{enumerate}
\item[(i)] $N_wN_{w^\prime}=N_{ww^\prime}$ for all $w,w^\prime\in W$
such that $l(ww^\prime)=l(w)+l(w^\prime)$.
\item[(ii)] $(N_s-v(s))(N_s+v(s)^{-1})=0$ for all $s\in S$.
\end{enumerate}
The generic affine Hecke algebra with parameter $m$
equals $\mc{H}(\mc{R},m):=\mc{H}_\Lambda(\mc{R})\otimes_{m^*} \Lb$

The set of distinguished generators $\{N_s\otimes 1\mid s\in S\}$
is considered an intrinsic part of the structure of $\mc{H}=\mc{H}(\mc{R},m)$,
as well as the set of generators $\{N_\omega\otimes 1\mid\omega\in\Omega_X\}$ of length $0$.
Finally we consider the subset $S_0\subset S$ as part of the structure of $\mc{H}$. 
\end{defn}
\begin{rem}
In particular the $\mb{L}$-basis
$\{N_w\otimes 1\mid w\in W\}$ is a distinguished bases of $\mc{H}$ which is an
integral part of its structure.
We will from now on write $N_w$ again instead of $N_w\otimes 1$.
\end{rem}
The following result is well known and its easy proof is left to the reader. 
\begin{prop}\label{prop:Riscan} The pair of data $(\mc{R},m)$ is canonically
determined by $\mc{H}$.
\end{prop}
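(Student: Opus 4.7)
My plan is to show that every piece of $(\mc{R},m)$ is intrinsically encoded in $\mc{H}$. The intrinsic data at our disposal, per Definition \ref{def:def} and the subsequent remark, is the $\mb{L}$-algebra $\mc{H}$ together with the distinguished generators $\{N_s\}_{s\in S}$, the set of length zero generators $\{N_\omega\}_{\omega\in\Omega_X}$, and the distinguished subset $S_0\subset S$. I would recover, in order: the parameter function $m_R$, then the Coxeter graph $X(F)$ and hence the untwisted affine Dynkin diagram $D(F)$, then the finite based root system $(R_0,F_0)$, and finally the lattice $X$. Since $\mc{R}$ is semisimple this suffices: $R_0^\vee$ is then forced by $R_0$ and $Y$ is the dual lattice of $X$.

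Reading off $m$ is immediate. After specialization, relation (ii) of Definition \ref{def:def} asserts that each $N_s$ satisfies $N_s^2=(v^{m_S(s)}-v^{-m_S(s)})N_s+1$ in $\mc{H}$. The coefficient of $N_s$, regarded as an element of $\mb{L}$, is an intrinsic invariant, and the assignment $k\mapsto v^k-v^{-k}$ is injective on $\mb{Z}$, so $m_S(s)$ is uniquely recovered. Varying $s$ and invoking $W$-invariance extends this to $m_R$ on all of $R$.

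Next, working modulo the quadratic relations in the subalgebra generated by $\{N_s\}_{s\in S}$ recovers the braid relations, i.e.\ the orders $m(s,s^\prime)$ of $ss^\prime$ in $W^a$ for every pair $s,s^\prime\in S$; this determines the Coxeter matrix and thus the Coxeter graph $X(F)$. By the classification of irreducible affine Coxeter graphs invoked in \S\ref{par:affroot}, $X(F)$ canonically lifts to the untwisted affine Dynkin diagram $D(F)$ (with nodes labelled by $F$). The intrinsic subset $S_0\subset S$ then cuts out the sub-Dynkin diagram corresponding to the finite root system $R_0$, so $(R_0,F_0)$ is determined.

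Finally, one recovers $X$. We have $Q(R_0)\subset X\subset P(R_0)$ and an isomorphism $\Omega_X\simeq X/Q(R_0)$ from \S\ref{sub:extcox}. The group $\Omega_X$ is intrinsic as the indexing set of the length zero generators, and its action on $S$ by conjugation, $N_\omega N_s N_\omega^{-1}=N_{\omega s\omega^{-1}}$, can be computed inside $\mc{H}$; this realizes $\Omega_X$ as a specific subgroup of $\mr{Aut}(D(F))$. Under the canonical embedding $P(R_0)/Q(R_0)\hookrightarrow\mr{Aut}(D(F))$ induced by translations of the fundamental alcove, this subgroup is identified with a concrete subgroup of $P(R_0)/Q(R_0)$, and its preimage in $P(R_0)$ is the desired lattice $X$.

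The only real obstacle in the plan is this last identification: one needs that the embedding $P(R_0)/Q(R_0)\hookrightarrow\mr{Aut}(D(F))$ is injective and that its image is uniquely recovered from the abstract combinatorial action on $D(F)$. For irreducible $R_0$ this reduces to a case-by-case verification against the tables of extended Dynkin diagrams, and the general semisimple case reduces to the irreducible one via the decomposition of $D(F)$ into connected components read off from $X(F)$.
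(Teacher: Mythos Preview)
Your argument is correct. The paper itself leaves this proof to the reader, so there is no printed argument to compare against, but an authorial sketch left in the source after \verb|\end{document}| takes a slightly different route that avoids the one point you flag as needing verification.

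The difference is in how $X$ is recovered. Rather than reconstructing $X$ from the $\Omega_X$-action on $D(F)$ together with the injectivity of $P(R_0)/Q(R_0)\hookrightarrow\mathrm{Aut}(D(F))$, one may use that the full distinguished basis $\{N_w\}_{w\in W}$ is part of the structure of $\mc{H}$: specializing at $\mb{v}=1$ turns this into the group basis of $\mathbb{C}[W]$, so $W$ itself is recovered as a group, with $S_0\subset S$ and $\Omega_X$ as distinguished subsets. The translation lattice $X$ is then characterized intrinsically inside $W$ as the normal subgroup consisting of elements with finite conjugacy class, and $W_0=\langle S_0\rangle$ acts on it. With $X$ already in hand, $R_0$ and $F_0$ are then pinned down inside $X$ by the affine Dynkin diagram $D(F)$ and the marked nodes $S\setminus S_0$. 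This order of reconstruction (lattice first, roots after) bypasses any case-by-case check.

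Your order (Dynkin data first, lattice after) is equally valid, and the injectivity you invoke is of course well known; the only cost is the appeal to tables that the other route avoids.
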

We include the following simple result on root data, for later reference.
\begin{prop}\label{prop:Rmcan} 
Suppose that $\mc{R}=(X,R_0,Y,R_0^\vee)$ is a root datum, and 
that $R_0'\subset R_0$ is an irreducible component of $R_0$. 
Put $X':=X\cap \mathbb{R}R_0'$, and $Y':=Y\cap \mathbb{R}R_0'^\vee$. 
Assume that either $(i)$: $X'=P(R_0')$ (the weight lattice of $R_0'$), or $(ii)$:  
$2Y\cap R_0'^\vee\not=\emptyset$.
Then the root datum $\mc{R}':=(X',R_0',Y',(R_0')^\vee)$ is a direct 
summand of $\mc{R}$. In the case $(ii)$,  $\mc{R}'$ has type $\textup{C}_n^{(1)}$
(i.e. $R_0'$ is of type $\textup{B}_n$, and $X'=Q(R_0')$).
\end{prop}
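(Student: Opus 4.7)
The plan is to build a complementary root datum $\mc{R}''$ so that $\mc{R} = \mc{R}' \oplus \mc{R}''$, treating case $(i)$ directly and reducing case $(ii)$ to case $(i)$ applied to the Langlands dual datum $\mc{R}^* := (Y, R_0^\vee, X, R_0)$. Let $R_0''$ be the union of the irreducible components of $R_0$ other than $R_0'$, and set $X'' := X \cap \mathbb{R}R_0''$ and $Y'' := Y \cap \mathbb{R}(R_0'')^\vee$. Since $\mathbb{R}R_0 = \mathbb{R}R_0' \oplus \mathbb{R}R_0''$ and roots in one component annihilate the coroots of the other, $X''$ also equals the annihilator of $(R_0')^\vee$ inside $X \cap \mathbb{R}R_0$, and dually for $Y''$.

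In case $(i)$, given $x \in X \cap \mathbb{R}R_0$, let $x'$ be its projection onto $\mathbb{R}R_0'$ parallel to $\mathbb{R}R_0''$. Integrality of the pairings $\langle x, \alpha^\vee\rangle$ with $\alpha \in R_0'$ forces $x' \in P(R_0')$, and by hypothesis this equals $X'$. Thus $x' \in X$ and $x - x' \in X \cap \mathbb{R}R_0'' = X''$, giving $X \cap \mathbb{R}R_0 = X' \oplus X''$. A dual argument (or the perfect pairing $X \times Y \to \mathbb{Z}$) yields $Y \cap \mathbb{R}R_0^\vee = Y' \oplus Y''$. One then checks routinely that $(X', R_0', Y', (R_0')^\vee)$ and $(X'', R_0'', Y'', (R_0'')^\vee)$ are root data whose direct sum recovers $\mc{R}$, with any torus factor of $\mc{R}$ absorbed into $\mc{R}''$.

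In case $(ii)$ the condition $\alpha^\vee \in 2Y$ forces $\langle \beta, \alpha^\vee\rangle \in 2\mathbb{Z}$ for every $\beta \in R_0 \subset X$. A case-by-case inspection of the irreducible root systems shows this can happen only when $R_0'$ has type $B_n$ and $\alpha$ is a short root: in simply laced types the off-diagonal Cartan entries lie in $\{0, \pm 1\}$, and in $C_n$, $F_4$, $G_2$ an odd entry appears whatever the length of $\alpha$. In the $B_n$ case $(R_0')^\vee$ has type $C_n$ and the class of $\alpha^\vee / 2 \in Y'$ generates $P((R_0')^\vee)/Q((R_0')^\vee) \cong \mathbb{Z}/2$; combined with $Y' \supset Q((R_0')^\vee)$ this forces $Y' = P((R_0')^\vee)$. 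Hence case $(i)$ applies to $\mc{R}^*$ with respect to the component $(R_0')^\vee \subset R_0^\vee$, and dualizing the resulting decomposition produces $\mc{R} = \mc{R}' \oplus \mc{R}''$.

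The type assertion is then immediate: with $R_0' = B_n$, the finite part of the affine root system $R' = (R_0')^\vee + \mathbb{Z}$ equals $(R_0')^\vee$ of type $C_n$, so by the conventions of \S\ref{par:affroot} the untwisted affine Dynkin diagram $D(F')$ is $C_n^{(1)}$. The main obstacle is the Cartan-integer classification at the start of case $(ii)$: it is elementary but must be verified against each irreducible type.
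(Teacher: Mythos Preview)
Your proof is correct. Case $(i)$ is essentially the paper's argument: both split $X$ via the projection onto $P(R_0')=X'$, with the paper phrasing this as ``$X'$ is the complement of the $W_0$-invariant sublattice $K:=\cap_{\alpha\in R_0'}\ker(\alpha^\vee)\subset X$''. One small point: your $X'':=X\cap\mathbb{R}R_0''$ misses any central torus, and the remark about ``absorbing'' it is a bit loose. The clean fix (which is what the paper does) is to take the complement to be the full annihilator $K$; your projection argument then gives $X=X'\oplus K$ directly for all of $X$, not just for $X\cap\mathbb{R}R_0$.

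Case $(ii)$ is where you genuinely diverge. You dualize: the hypothesis $\alpha^\vee\in 2Y$ together with the Cartan-integer classification forces $R_0'=B_n$ and $Y'=P((R_0')^\vee)$, so case $(i)$ applies to $\mc{R}^*=(Y,R_0^\vee,X,R_0)$, and re-dualizing gives the splitting of $\mc{R}$. The paper instead stays on the $X$-side but changes the root system: it introduces an auxiliary $R_1$ obtained from $R_0$ by doubling the short roots in $R_0'$ (so $R_1'$ has type $C_n$), observes that then $X'=Q(R_0')=P(R_1')$, and applies case $(i)$ to $\mc{R}_1=(X,R_1,Y,R_1^\vee)$; the resulting lattice decomposition then serves equally well for $\mc{R}$. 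Your route is more self-contained and makes the type identification explicit up front; the paper's route is terser but implicitly relies on the same classification to know that $R_1$ exists and is a root system in $(X,Y)$.
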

\begin{proof}
If $X'=P(R_0')$ then $X'$ is the complement of the $W_0$-invariant sub-lattice 
$K:=\cap_{\alpha\in R_0'}\textup{Ker}(\alpha^\vee)\subset X$. 
If $2Y\cap R_0'^\vee\not=\emptyset$ then the same argument applies to $\mc{R}_1:=(X,R_1,Y,R_1^\vee)$ 
(where $R_1$ is the set of inmultipliable roots of the non-reduced root system 
$R_{nr}$ defined by $R_{nr}^\vee=R_0^\vee\cup (Y\cap \frac{1}{2}R_0^\vee)$)
since $X'=Q(R_0')=P(R_1')$, showing that $\mc{R}'_1:=(X',R_1',Y',R_1'^\vee)$
is a direct summand of $\mc{R}_1$. But then $\mc{R}'$ is also a direct summand of $\mc{R}$. 
Moreover $2Y'\cap R_0'^\vee\not=\emptyset$. 
This implies that the irreducible root datum $\mc{R}'$ has type 
$\textup{C}_n^{(1)}$.
\end{proof}
\subsubsection{$\mc{H}(\mc{R},m)$ as an extended affine Hecke algebra}
As is well known, we can view $\mc{H}(\mc{R},m)$ as an extended affine Hecke algebra
in the following way. Define $\mc{R}^a=(R_0,Q(R_0),R_0^\vee,P(R_0^\vee),F_0)$
and let $m^a$ be the composition of $m$ with the natural map
$\mc{Q}_c\to\mc{Q}^a_c$. Then we have an algebra isomorphism
$\mc{H}(\mc{R},m)=\mc{H}(\mc{R}^a,m^a)\rtimes\Omega_X$.
\subsubsection{Homomorphisms of affine Hecke algebras} We introduce some
useful classes of $\Lb$-algebra homomorphisms between generic affine Hecke algebras.
A \emph{strict} homomorphism $\lambda:\mc{H}\to\mc{H}^\prime$ between two
generic extended affine Hecke algebras is an algebra homomorphism given by
a homomorphism 
$\lambda^W:(W,S,S_0,\Omega_X)\to(W^\prime,S^\prime,S^\prime_0,\Omega^\prime_{X^\prime})$
between the underlying (extended affine) Coxeter groups (of course, in order to extend to 
the Hecke algebra level, the Hecke parameters of $\mc{H}$ and $\mc{H}'$ should match
accordingly).
An \emph{essentially strict} homomorphism is like a strict homomorphism except that
$\lambda(N_s)=\epsilon(s) N_{\lambda^W(s)}$ for some signatures $\epsilon(s)\in\{\pm 1\}$,
and $\lambda(N_\omega)=\delta(\omega)N_{\lambda^W(\omega)}$ for some character
$\delta\in\Omega^*_X$.
An \emph{admissible} homomorphism is a homomorphism of algebras respecting the 
standard bases of $\mc{H}$ and $\mc{H}'$ in the sense that for all $w\in W$: 
$\lambda(N_w)=\epsilon(w) N_{\lambda^W(w)}$, 
where $\epsilon$ is a linear character of $W$ and $\lambda^W: W\to W^\prime$ a group
homomorphism.
If $\mc{H}=\mc{H}(\mc{R},m)$ we speak of strict, essentially strict
and admissible automorphisms of $\mc{H}$ (even though 
an ``admissible automorphism" of $\mc{H}$ may not 
respect all intrinsic structures of $\mc{H}$, in which case it is strictly speaking  
an isomorphism between different Hecke algebras).
\subsubsection{Notational conventions}
We use the following notational conventions for change of coefficients.
If $\mc{A}$ is an $\Lb$-algebra and
$\mb{M}$ is a unital, commutative $\Lb$-algebra, then
$\mc{A}_\mb{M}:=\mc{A}\otimes_{\Lb}\mb{M}$ denotes the extension of
scalars from $\Lb$ to $\mb{M}$. Given $\mb{v}\in\mathbb{C}^\times$
we use the notation $\mc{A}_\mb{v}$ as a shorthand for the
$\mathbb{C}$-algebra $\mc{A}_{\mathbb{C}_\mb{v}}$, where
$\mathbb{C}_\mb{v}$ denotes the residue field of $\Lb$ at
$\mb{v}$.

Now let $\mc{A}$ be a commutative $\mb{L}$-algebra,
and let $X=\textup{Spec}(\mc{A})$ be its spectrum
viewed as an affine scheme over $\textup{Spec}(\mb{L})$.
If $\mb{v}\in\textup{Spec}(\mb{L})$ we denote by
$X_\mb{v}=\operatorname{Spec}(\mc{A}_\mb{v})$
its fiber at $\mb{v}$. We denote by $X(\mb{L})$ the
set of $\Lb$-valued points of $X$.
\begin{rem}
We will often use the indeterminate $q=v^2$ (since various naturally
given functions turn out to be functions of $v^2$). If we specialize $q$ at
$\mb{q}>0$ we will tacitly assume that $\mb{v}$ is
the \emph{positive} square root of $\mb{q}>0$.
\end{rem}
\subsection{The Bernstein basis and the center}
Let $\mc{H}=\mc{H}(\mc{R},m)$ be a generic affine Hecke algebra, and
let $X^+\subset X$ denote the cone of dominant elements in the lattice $X$.
As is well known, the map $X^+\to\mc{H}^\times$ defined by
$x\to N_{t_x}:=\theta_x$ is a monomorphism of monoids, and the
commutativity of the monoid $X^+$ implies that this
can be uniquely extended to a group monomorphism
$X\ni x\to\theta_x\in\mc{H}^\times$.
We denote by
$\mc{A}\subset\mc{H}$ the commutative $\Lb$-subalgebra of
$\mc{H}$ generated by the elements $\theta_x$ with $x\in X$.
Observe that $\mc{A}\approx \Lb[X]$.
Let $\mc{H}_0=\mc{H}(R_0,m)$ be the Hecke
subalgebra (of finite rank over the algebra $\Lb$) corresponding to
the Coxeter subgroup $(W_0,S_0)\subset(W^a,S)$.
We have the following well known result due to Bernstein-Zelevinski
(unpublished) and Lusztig (\cite{Lus2}):
\begin{thm}\label{thm:ber}
Let $\mc{H}=\mc{H}(\mc{R},m)$ be a generic affine Hecke algebra.
The multiplication map defines an isomorphism of
$\mc{A}-\mc{H}_{0}$-modules
$\mc{A}\otimes\mc{H}_{0}\to\mc{H}$ and an isomorphism
of $\mc{H}_{0}-\mc{A}$-modules
$\mc{H}_{0}\otimes\mc{A}\to\mc{H}$.
The algebra structure on $\mc{H}$ is determined
by the the following cross relation (with $x\in X$, $\alpha\in F_0$,
$s=r_{\alpha^\vee}$, and $s^\prime\in S$ is a simple reflection
such that $s^\prime\sim_W r_{\alpha^\vee+1}$):
\begin{equation}\label{eq:ber}
\theta_x N_s-N_s \theta_{s(x)}=\left((v^{m_S(s)}-v^{-m_S(s)})+
(v^{m_S(s^\prime)}-v^{-m_S(s^\prime)})\theta_{-\alpha}\right)
\frac{\theta_x-\theta_{s(x)}}{1-\theta_{-2\alpha}}
\end{equation}
(Note that if $s^\prime\not\sim_W s$ then $\alpha^\vee\in 2Y$,
which implies  $x-s(x)\in 2\mathbb{Z}\alpha$ for all $x\in X$. This guarantees
that the right hand side of (\ref{eq:ber}) is always an element of $\mc{A}$).
\end{thm}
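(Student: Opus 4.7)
The plan is to first construct the elements $\theta_x$ for arbitrary $x\in X$ and establish the algebra $\mc{A}\cong\Lb[X]$, then deduce the two module isomorphisms from a length-additivity argument, and finally reduce the cross relation to a rank-one calculation.

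For the first step, I would extend the monoid homomorphism $X^+\ni x\mapsto N_{t_x}=\theta_x$ to a group homomorphism $X\to\mc{H}^\times$ in the standard way: given $x\in X$, write $x=y-z$ with $y,z\in X^+$ and set $\theta_x:=\theta_y\theta_z^{-1}$. Well-definedness follows from the identity $\theta_{y+y'}=\theta_y\theta_{y'}$ for $y,y'\in X^+$ (a consequence of $l(t_{y+y'})=l(t_y)+l(t_{y'})$ together with the length-additivity braid relation (i)) and the cancellation property of the commutative monoid $X^+$. One then checks $\theta_x\theta_{x'}=\theta_{x+x'}$ for all $x,x'\in X$, so the subalgebra $\mc{A}\subset\mc{H}$ generated by $\{\theta_x\}_{x\in X}$ is isomorphic to $\Lb[X]$.

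For the module isomorphism, I would use the semidirect product decomposition $W=X\rtimes W_0$ (as sets) to write every $w\in W$ uniquely as $w=t_x u$ with $x\in X$, $u\in W_0$. When $x\in X^+$, length-additivity gives $\theta_x N_u=N_{t_x u}$, and for general $x$ one obtains $\theta_x N_u$ as an $\Lb$-linear combination of basis vectors $N_{w'}$ whose leading term is $N_{t_x u}$ with respect to a suitable partial order (Bruhat-type, controlled by the difference $x-y$ with $y\in X^+$ chosen so that $x+y\in X^+$). A triangularity argument against the basis $\{N_w\}_{w\in W}$ then shows that $\{\theta_x N_u\mid x\in X, u\in W_0\}$ is an $\Lb$-basis of $\mc{H}$, establishing the first isomorphism; the second isomorphism is symmetric, using right multiplication.

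For the cross relation (\ref{eq:ber}), I would observe that both sides are $\Lb$-linear in $x$ once we fix $s$, so it suffices to verify it on a generating subset of $X$ (e.g.\ a set of fundamental weights, or in fact on $X$ itself elementwise, since the case $s(x)=x$ is trivial and the case $s(x)\neq x$ reduces via translation by the $s$-fixed part of $X$). Fixing $s=r_{\alpha^\vee}$ for $\alpha\in F_0$, the identity lives in the rank-one sub-structure generated by $\mc{A}$ and $N_s$ (together with $N_{s'}$ when $s'\not\sim_W s$), so one reduces to the rank-one affine Hecke algebra of type $\mathrm{A}_1^{(1)}$ (when $\alpha^\vee\notin 2Y$) or $\mathrm{C}_1^{(1)}$ (when $\alpha^\vee\in 2Y$). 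In each case the verification is a direct computation using only the quadratic relation (ii) and the definitions of $\theta_x$ in terms of $N_{t_x}$.

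The main obstacle is the rank-one computation in the twisted case $s'\not\sim_W s$, where the two independent parameters $v^{m_S(s)}$ and $v^{m_S(s')}$ must appear in the correct combination; here one must carefully use that $x-s(x)\in 2\mathbb{Z}\alpha$ (as noted in the parenthetical remark of the theorem) so that the factor $(\theta_x-\theta_{s(x)})/(1-\theta_{-2\alpha})$ is a genuine element of $\mc{A}$, and then match it against the explicit form of $N_s\theta_{-\alpha}N_s$ as computed from the quadratic relation and the Bernstein relation for the untwisted generator $s'$.
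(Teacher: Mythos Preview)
The paper does not actually prove this theorem: it is stated as a well-known result due to Bernstein--Zelevinski (unpublished) and Lusztig \cite{Lus2}, and no proof is given in the text. Your sketch follows the standard argument that appears in Lusztig's paper, so in that sense it is aligned with what the paper invokes.

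Your first two steps are sound and are exactly the classical construction: the extension of $x\mapsto\theta_x$ from $X^+$ to $X$ via $\theta_x=\theta_y\theta_z^{-1}$, and the triangularity of $\{\theta_xN_u\}$ against $\{N_w\}$ to obtain the $\mc{A}$--$\mc{H}_0$ module decomposition.

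There is a genuine imprecision in your final paragraph. In the case $s'\not\sim_W s$ you propose to ``match against the explicit form of $N_s\theta_{-\alpha}N_s$ as computed from the quadratic relation and the Bernstein relation for the untwisted generator $s'$''. But $s'=r_{\alpha^\vee+1}$ is an \emph{affine} simple reflection, not an element of $S_0$, so there is no Bernstein cross relation of the form \eqref{eq:ber} available for $s'$; invoking one would be circular. The correct rank-one computation in type $\textup{C}_1^{(1)}$ proceeds entirely from the Coxeter presentation: one writes the translation $t_\alpha=s's$ (so $\theta_\alpha=N_{s'}N_s$ for the dominant $\alpha$), and then computes $\theta_xN_s$ and $N_s\theta_{s(x)}$ directly by repeated use of the two quadratic relations $(N_s-v^{m_S(s)})(N_s+v^{-m_S(s)})=0$ and $(N_{s'}-v^{m_S(s')})(N_{s'}+v^{-m_S(s')})=0$ together with length-additivity in the infinite dihedral group. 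This is where the two independent parameters enter, and the factorisation on the right-hand side of \eqref{eq:ber} emerges from the product $N_{s'}N_s\cdot N_s$ after applying the quadratic relation for $N_s$. Once you replace the circular appeal to a ``Bernstein relation for $s'$'' by this direct dihedral computation, your outline is complete and correct.
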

\begin{cor} The center $\mc{Z}$ of $\mc{H}$ is the algebra
$\mc{Z}=\mc{A}^{W_0}$. For any $\mb{v}\in\mc{Q}_c$ the center of
$\mc{H}_\mb{v}$ is equal to the subalgebra
$\mc{Z}_\mb{v}=\mc{A}^{W_0}_\mb{v}\subset\mc{H}_\mb{v}$.
\end{cor}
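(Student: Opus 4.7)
The plan is to establish both inclusions in $\mc{Z}=\mc{A}^{W_0}$ by working over the generic coefficient ring $\Lb$ and exploiting the Bernstein presentation of Theorem \ref{thm:ber}, and then to deduce the specialized statement by applying the same argument inside each $\mc{H}_\mb{v}$.

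For the inclusion $\mc{A}^{W_0}\subseteq\mc{Z}$: since $\mc{H}$ is generated as an $\Lb$-algebra by $\mc{A}$ together with the finite Hecke subalgebra $\mc{H}_0$, it suffices to check that any $z\in\mc{A}^{W_0}$ commutes with each generator $N_s$, $s=r_{\alpha^\vee}\in S_0$. Expanding $z$ as an $\Lb$-linear combination of the $\theta_x$ and summing the cross relation (\ref{eq:ber}) $\Lb$-linearly in $x$, one finds that the right-hand side collects to a rational multiple of $z-s(z)$, which vanishes by $W_0$-invariance. This is a direct algebraic check.

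For the reverse inclusion $\mc{Z}\subseteq\mc{A}^{W_0}$: I would localize at the fraction field $Q:=\textup{Frac}(\mc{A})$. Theorem \ref{thm:ber} exhibits $\mc{H}$ as a free right $\mc{A}$-module on $\{N_w\}_{w\in W_0}$, and a standard rescaling of these basis elements --- using (\ref{eq:ber}) repeatedly --- produces Lusztig-Bernstein intertwiners $\tau_w\in Q\otimes_\mc{A}\mc{H}$ satisfying $\tau_w\cdot f=w(f)\cdot\tau_w$ for $f\in Q$ together with the braid relations. This identifies $Q\otimes_\mc{A}\mc{H}$ with the crossed product $Q\# W_0$, whose center is $Q^{W_0}$ because $W_0$ acts faithfully on $Q$. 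Any central $z\in\mc{H}$ remains central in this localization, forcing its $\tau_w$-expansion to collapse to the identity coset and its scalar component to lie in $Q^{W_0}\cap\mc{A}=\mc{A}^{W_0}$. The step requiring the most care is verifying that the rescaled intertwiners genuinely satisfy the braid relations in the unequal-parameter setting of (\ref{eq:ber}); in contrast to the equal-parameter case one must keep track of two independent parameters $v(s)$ and $v(s')$ simultaneously.

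For each $\mb{v}\in\mc{Q}_c$ the specialized ring $\mc{A}_\mb{v}\cong\mathbb{C}[X]$ remains an integral domain, $\mc{H}_\mb{v}$ remains free over $\mc{A}_\mb{v}$ on the Bernstein basis $\{N_w\}_{w\in W_0}$, and the cross relation (\ref{eq:ber}) specializes as a purely algebraic identity. The two-step argument therefore carries over verbatim to give $\mc{Z}(\mc{H}_\mb{v})=(\mc{A}_\mb{v})^{W_0}$. Finally, since $|W_0|$ is invertible in $\Lb$, the averaging projector provides an $\Lb$-module splitting $\mc{A}=\mc{A}^{W_0}\oplus\mc{A}_\perp$, so taking $W_0$-invariants commutes with scalar extension and $(\mc{A}_\mb{v})^{W_0}=(\mc{A}^{W_0})_\mb{v}=\mc{Z}_\mb{v}$, which completes the proof.
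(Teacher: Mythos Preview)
The paper states this corollary without proof, treating it as a classical consequence of the Bernstein presentation (Theorem~\ref{thm:ber}) due to Bernstein--Zelevinski and Lusztig~\cite{Lus2}. Your argument is correct and is essentially the standard one found in that literature.

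One small remark: your flagged concern about the braid relations for the intertwiners $\tau_w$ in the unequal-parameter setting is not actually needed for this argument. All that is required is that the $\tau_w$ form a $Q$-basis of $Q\otimes_{\mc{A}}\mc{H}$ satisfying $\tau_w f = w(f)\tau_w$ for $f\in Q$. The basis property follows because the change from $\{N_w\}_{w\in W_0}$ to $\{\tau_w\}_{w\in W_0}$ is upper-triangular in Bruhat order with units on the diagonal over $Q$, which one reads off by iterating the cross relation~(\ref{eq:ber}); independence of the reduced expression is irrelevant here. Commuting a central $z=\sum_w f_w\tau_w$ with generic $g\in Q$ then kills each $f_w$ with $w\neq e$ (faithfulness of the $W_0$-action on $Q$), so $z=f_e\cdot\tau_e=f_e\in Q$. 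Since $\tau_e=N_e=1$ and $\mc{H}$ is free over $\mc{A}$ on $\{N_w\}$, one has $z\in\mc{A}$, and then $W_0$-invariance follows either from commutation with each $\tau_s$ or from your first paragraph. So the braid relations, while true in this generality, can be sidestepped entirely.
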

In particular $\mc{H}$ is a finite type algebra over its center
$\mc{Z}\approx \Lb[X]^{W_0}$, and similarly $\mc{H}_\mb{v}$
is a finite type algebra over its center $\mc{Z}_\mb{v}$.
\subsection{Arithmetic and spectral diagrams}\label{par:arithspec}
\subsubsection{The arithmetic diagram}
Let $\mc{H}=\mc{H}(\mc{R},m)$ be a generic affine Hecke algebra
for a \emph{semisimple} root datum $\mc{R}$.
Then the affine Hecke algebra $\mc{H}(\mc{R},m)$ is determined by
the ``arithmetic diagram'' $\Sigma_a=\Sigma_a(\mc{R},m)$ derived from the data
$(\mc{R},m)$ as follows. The arithmetic diagram consists of the
affine Dynkin diagram $(R_0^\vee)^{(1)}$ of the based affine root system
$(R,F)$, with one marked special vertex in each
connected component (corresponding to the simple affine roots
which are not in $F_0^\vee$), the action of $\Omega_X=X/Q(R_0)$
on this diagram, and a labelling of the vertices by the values
of the integer valued function $m_R$. Clearly $\Sigma_a$
determines the tuple of data $(W,S,S_0,\Omega_X,m_S)$ and therefore
it determines $\mc{H}$.
\subsubsection{Standard and semi-standard pairs $(\mc{R},m)$}
\begin{defn} We call the data $(\mc{R},m)$ \emph{standard} if (i)
the root datum $\mc{R}$ is semisimple, and (ii)
$\forall s\in S$: $m_S(s)\not=0$.
If (ii) is replaced by (ii') $\forall s\in S$: either $m_S(s)\not=0$ or
$m_S(s^\prime)\not=0$ (using the notation of Theorem \ref{thm:ber}),
then we call $(\mc{R},m)$ semi-standard.
\end{defn}
We call an arithmetic diagram $\Sigma_a(\mc{R},m)$ of (semi-) standard data $(\mc{R},m)$
a (semi-) standard arithmetic diagram.
\begin{lem}\label{lem:stand}
Let $(\mc{R},m)$ be semi-standard and let $\mc{H}=\mc{H}(\mc{R},m)$.
There exists an admissible isomorphism $\mc{H}\to\mc{H}(\mc{R}^\prime,m^\prime)$
with $(\mc{R}^\prime,m^\prime)$ standard. Hence up to an admissible isomorphism,
$\mc{H}$ is represented by a standard arithmetic diagram.
\end{lem}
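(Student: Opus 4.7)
The plan has three phases: reduction to irreducible summands, dispatching the trivial case, and an explicit construction in type $\textup{C}_n^{(1)}$.

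First, I would iterate Proposition \ref{prop:Rmcan} to decompose $\mc{R}$ into a direct sum of irreducible root data $\mc{R}=\bigoplus_i \mc{R}_i$, inducing a tensor product factorization $\mc{H}(\mc{R},m)\cong\bigotimes_i\mc{H}(\mc{R}_i,m_i)$. Both semi-standardness and standardness are componentwise conditions, and tensor products of admissible isomorphisms are admissible, so it suffices to treat each irreducible summand separately.

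Second, I would handle the easy case: if no root $\alpha\in R_0$ of the irreducible summand satisfies $\alpha^\vee\in 2Y$, then every pair $(s,s')$ appearing in the Bernstein cross relation of Theorem \ref{thm:ber} collapses to $s'=s$. The semi-standard hypothesis then becomes $m_S(s)\ne 0$ for all $s\in S$, which is exactly standardness, and the identity map is the required admissible isomorphism.

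Third, by Proposition \ref{prop:Rmcan}(ii) the only remaining case is $\mc{R}$ of type $\textup{C}_n^{(1)}$. Here the affine Dynkin diagram has two end-node conjugacy classes $\{s\}$, $\{s'\}$ (paired in the Bernstein relation) and, for $n\ge 2$, one middle class (for which the paired reflection is the generator itself). Semi-standardness forces the middle parameter to be nonzero and $(m_S(s),m_S(s'))\ne(0,0)$. If both end parameters are nonzero the datum is already standard. Otherwise, after possibly applying the diagram automorphism of $\textup{C}_n^{(1)}$ swapping the two ends, we may assume $m_S(s)=0$ and $m_S(s')\ne 0$. I would then construct an explicit admissible isomorphism $\lambda:\mc{H}(\mc{R},m)\to\mc{H}(\mc{R}',m')$ targeting a standard datum $(\mc{R}',m')$. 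The key observation is that $m_S(s)=0$ forces $N_s^2=1$, so $N_s$ satisfies the relation characteristic of a length-zero basis element. This allows one to repackage $\mc{R}$ as a root datum $\mc{R}'$ in which the role formerly played by $s$ as a Coxeter generator is transferred to an element of $\Omega_{X'}$, the lattice $X$ being adjusted correspondingly so that $W'\cong W$ as an abstract group but with new Coxeter data $(S',S_0',\Omega_{X'})$. The group isomorphism $l:W\to W'$ carrying out this repackaging, together with the trivial sign character, defines $\lambda$; compatibility with the Bernstein cross relation (\ref{eq:ber}) is immediate in the relevant position because the factor $(v^{m_S(s)}-v^{-m_S(s)})$ vanishes identically, and compatibility at the other generators is transported by $l$.

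The main obstacle is step three: one must verify that the target $\mc{R}'$ identified this way is indeed a legitimate root datum of the form allowed in Definition \ref{def:def}, in particular that the new length-zero subgroup $\Omega_{X'}$ acts by diagram automorphisms on the new affine Coxeter structure, and that $(\mc{R}',m')$ is truly standard (no remaining simple reflection with vanishing Hecke parameter). This is a case-by-case verification over the finite list of type $\textup{C}_n^{(1)}$ root data, but the structural idea---promoting an involution with Hecke relation $N_s^2=1$ into a length-zero generator---is uniform across all cases.
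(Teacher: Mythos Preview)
Your overall strategy matches the paper's: reduce to irreducible summands, observe that only type $\textup{C}_n^{(1)}$ can fail to be standard, use the diagram flip to move the vanishing parameter to the affine node $s_0$, then pass to a standard datum via an admissible isomorphism. The structural intuition that $N_{s_0}^2=1$ lets $s_0$ behave like a length-zero element is exactly what drives the construction.

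However, your description of the mechanism in step three is not quite right. You propose to ``adjust the lattice $X$'' so that $s$ lands in $\Omega_{X'}$, but if $W'\cong W=X\rtimes W_0$ with $W_0$ unchanged, then $X'$ must equal $X$ as a lattice. What actually changes is the \emph{root system} sitting inside $X$: the paper keeps $X^C$ fixed and replaces $R_0^C$ (type $\textup{B}_n$, with $X^C=Q(R_0^C)$) by $R_1^C$ of type $\textup{C}_n$, whose long roots are twice the short roots of $R_0^C$. Since $Q(R_1^C)$ has index $2$ in $X^C=P(R_1^C)$, the new $\Omega_{X^C}=X^C/Q(R_1^C)\cong\mathbb{Z}/2$ is nontrivial, and the affine Coxeter presentation of $W$ changes accordingly; the new parameter on the relevant end is $m_{1,R}^C(\alpha^\vee)=m_R^C(2\alpha^\vee)\ne 0$. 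Your remark about a ``case-by-case verification over the finite list of type $\textup{C}_n^{(1)}$ root data'' is also misleading: since $\Omega_X$ is trivial in type $\textup{C}_n^{(1)}$ there is only one such datum, and the construction is a single explicit change of root system rather than a casewise check.
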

\begin{proof}
Suppose that $m_S(s)=0$. Then $m_S(s^\prime)\not=0$ by assumption, and
either $s$ or $s^\prime$ belongs to a simple root $\alpha\in F_0$ such that
$\alpha^\vee\in 2Y$. By Proposition
\ref{prop:Rmcan} we see that $s$ and $s^\prime$ belong to a component $C$ of
$\Sigma_a$ of type $\textup{C}_n^{(1)}$, and $\Omega_X$ fixes all the vertices of $C$
point wise. We denote the corresponding direct summand of $\mc{R}$ by
$\mc{R}^C=(R_0^C,X^C,(R_0^C)^\vee,Y^C,F_0^C)$ (hence $R_0^C$ has type $\textup{B}_n$
and $X$ is its root lattice).

By applying an admissible isomorphism to $\mc{H}$ we may assume that
$m_R(\alpha_0^C)=0$ (where $\alpha_0^C$ is the unique simple affine root of
$\mc{R}^C$ which is not in $F^C_0$).
Applying another admissible isomorphism we may replace
$(\mc{R}^C,m^C)$ by standard data $(\mc{R}^C_1,m^C_1)$ with
$\mc{R}^C_1=(R_1^C,X^C,(R_1^C)^\vee,Y^C,F_1^C)$
(where $R_1$ is of type $\textup{C}_n$
such that its long roots are twice the short roots of $R_0$),
and $m_{1,R}^C(\alpha^\vee)=m^C_R(2\alpha^\vee)$
for all long roots of $R_1$. We repeat this procedure for all components
of type $\textup{C}_n^{(1)}$ if necessary.
\end{proof}
\subsubsection{The spectral diagram}
For the purpose of this paper it is
also convenient to describe $\mc{H}$ in a ``dual'' way, by means of
the ``spectral diagram'' $\Sigma_s(\mc{R},m)$.
A similar (but different) notion of the spectral diagram for a pair $(\mc{R},q)$
was defined in \cite[Definition 8.1]{OpdSol2}. We will use a slight variation
of this definition (see below) adapted to generic affine Hecke algebras
in the present context of Definition \ref{def:def}.
\begin{defn}
Let $n_m:R_0\to\{1,2\}$ be the $W_0$-invariant function on $R_0$
defined by $n_m(\alpha)=2$ iff $m_R(1-\alpha^\vee)\not=m_R(\alpha^\vee)$
(notice that this inequality in particular implies that $\alpha^\vee\in 2Y$).
\end{defn}
We introduce a set $R_m\subset X$ by
$R_m=\{n_m(\alpha)\alpha\mid \alpha\in R_0\}$.
Then $R_m$ is again a root system, and $W_0=W_0(R_m)$.
By construction we have
$Q(R_m)\subset Q(R_0)\subset X\subset P(R_m)\subset P(R_0)$.
Let ${\Omega^\vee_Y}$ be the abelian group
$\Omega^\vee_Y=Y/Q(R_m^\vee)$.
Let $W^\vee=Y\rtimes W_0$ be the affine Weyl
group associated with the dual $\mc{R}^\vee$
of the root datum $\mc{R}$.
Observe that
\begin{equation}\label{eq:ominv}
\mc{H}(\mc{R}_0,m)\subset \mc{H}(\mc{R},m)=(\mc{H}(\mc{R}^m,m))^{\Omega^\vee_Y}\subset 
\mc{H}(\mc{R}^m,m)
\end{equation}
with $\mc{R}_0=(Q(R_0),R_0,P(R_0^\vee),R_0^\vee,F_{0})$ and $\mc{R}^m=(P(R_m),R_0,Q(R_m^\vee),R_0^\vee,F_{m,0})$.
Both inclusions have finite index (here we use that $\mc{R}$ is semisimple).
We have
\begin{equation}\label{eq:dualWeyl}
W^\vee=W((\mc{R}^m)^\vee)\rtimes {\Omega^\vee_Y}
\end{equation}
where $W((\mc{R}^m)^\vee)=W(R_m^{(1)})$ is the affine Coxeter group associated
with the affine extension of the root system $R_m$.
Since $\mc{R}$ is semi-simple then $X_m:=P(R_m)$ is the largest sublattice of
$P(R_0)$ such that the function $m_R$ on $R=R_0^\vee\times\mathbb{Z}$ is
invariant for the natural action of $W(\mc{R}^m)=X_m\rtimes W_0$ on $R$.
Thus we see that $\mc{H}(\mc{R}^m,m)$ is the largest affine Hecke
algebra containing $\mc{H}(\mc{R},m)$ as a subalgebra of finite index.
Observe that both $\mc{R}_0$ and $\mc{R}^m$ are direct products of their 
irreducible components.

The diagram underlying the spectral diagram $\Sigma_s(\mc{R},m)$ is the diagram
of the affine extension $R_m^{(1)}$ of $(R_m,F_m)$. We identify its
vertices with the set $F_m^{(1)}$ of affine simple roots. We have a
natural action of ${\Omega^\vee_Y}$ on this diagram, and this information is included
in the spectral diagram.
The last piece of data of the spectral diagram $\Sigma$
is a $W^\vee$-invariant labeling of its nodes by integers.
We will define this labeling by restriction to $F_m^{(1)}$
of a $W^\vee$-invariant function $m_{R}^\vee$
on $R_m^{(1)}$ with integer values.

Recall that $m$ determines a $W$-invariant integer valued function
$m_R$ on $R$. From this function we construct two half-integral $W_0$-invariant
functions $m_{\pm}$ on $R_0$ as follows: for $\alpha\in R_0$ we define
\begin{equation}\label{eq:m}
\left\{
\begin{array}{ll}
   m_{+}(\alpha)=\frac{1}{2}(m_R({\alpha^\vee})+m_R(1-{\alpha^\vee}))\\
   m_{-}(\alpha)=\frac{1}{2}(m_R({\alpha^\vee})-m_R(1-{\alpha^\vee}))
\end{array}
  \right.
\end{equation}
The nodes of the Dynkin diagram of $R_m^{(1)}$ are labelled by
the values of a $W^\vee$-invariant function $m_{R}^\vee$ on
the affine root system $R_m^{(1)}$ defined as follows.
Let $a^\vee=n_m(\alpha) \alpha+k\in R_m^{(1)}$.
We introduce a $W^\vee$-invariant signature function
$\ep:R_m^{(1)}\to\{\pm\}$
by defining $\ep(a^\vee)1=(-1)^{k(n_m(\alpha)-1)}$
(one easily checks that this is indeed $W^\vee$-invariant).
Finally we define an integer valued function $m^\vee_R$ by:
\begin{equation}
m_{R}^\vee(a^\vee):=n_m(\alpha) m_{\ep(a^\vee)}(\alpha)
\end{equation}
\begin{defn}\label{defn:specdiagr}
The spectral diagram $\Sigma_s(\mc{R},m)$
associated with $(\mc{R},m)$ consists of the extended
Dynkin diagram of $(R_m^{(1)},F_m^{(1)})$,
the finite abelian group ${\Omega^\vee_Y}=Y/Q(R_m^\vee)$
of automorphisms of this diagram, and the function
$m_R^\vee$ defined above.
\end{defn}
The following is clear:
\begin{prop} If $\mc{R}$ is semisimple then the algebra $\mc{H}(\mc{R},m)$ is
completely determined by its spectral diagram $\Sigma_s=\Sigma_s(\mc{R},m)$. 
Note that  $(\mc{R},m)$
is semi-standard iff $\Sigma_s$ is semi-standard in the sense that for
all $a^\vee=n_m(\alpha) \alpha+k\in R_m^\vee$, either 
$m_-(\alpha)\not=0$ or $m_+(\alpha)\not=0$.
\end{prop}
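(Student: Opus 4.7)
The plan is to invert the construction of $\Sigma_s(\mc{R},m)$ from Definition~\ref{defn:specdiagr} and thereby recover $(\mc{R},m)$ from $\Sigma_s$. Since $(\mc{R},m)$ determines $\mc{H}(\mc{R},m)$ by Definition~\ref{def:def}, this proves the first assertion. From the extended Dynkin diagram of $R_m^{(1)}$ in $\Sigma_s$, I first read off $R_m$ as an abstract root system, together with its Weyl group $W_0=W_0(R_m)=W_0(R_0)$ and coroot lattice $Q(R_m^\vee)$. The specified group $\Omega^\vee_Y$ of diagram automorphisms, viewed as a subgroup of $P(R_m^\vee)/Q(R_m^\vee)$ via the identification $\Omega^\vee_Y=Y/Q(R_m^\vee)$, then pins down $Y$ as the preimage in $P(R_m^\vee)$; here the semisimplicity of $\mc{R}$ ensures $Q(R_m^\vee)\subset Y\subset P(R_m^\vee)$.

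I next recover $R_0$ by reading off the function $n_m:R_0\to\{1,2\}$ on each $W_0$-orbit. Along the $W^\vee$-orbit $O$ of nodes of $R_m^{(1)}$ corresponding to a $W_0$-orbit of $\alpha\in R_0$, the labels $m_R^\vee(a^\vee)=n_m(\alpha)\,m_{\ep(a^\vee)}(\alpha)$, with $\ep(a^\vee)=(-1)^{k(n_m(\alpha)-1)}$ for $a^\vee=n_m(\alpha)\alpha+k$, are constantly $m_+(\alpha)$ if $n_m(\alpha)=1$ (since $\ep\equiv +$), but alternate between $2m_+(\alpha)$ and $2m_-(\alpha)$ if $n_m(\alpha)=2$. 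So outside the degenerate case discussed below, comparing labels on $O$ determines $n_m(\alpha)$. Having $R_0$, the lattice $X$ is fixed as the $\mathbb{Z}$-dual of $Y$ with respect to the induced pairing on $\mathbb{Q}\otimes Q(R_0)$, and $m_R$ is reconstructed from $m_\pm$ by inverting~(\ref{eq:m}) and extending $W$-invariantly.

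For the second assertion, (\ref{eq:m}) gives $m_S(s)=m_R(\alpha^\vee)=m_+(\alpha)+m_-(\alpha)$ and $m_S(s')=m_R(1-\alpha^\vee)=m_+(\alpha)-m_-(\alpha)$ for each $\alpha\in F_0$, so the condition "$m_S(s)\ne 0$ or $m_S(s')\ne 0$" is equivalent to $(m_+(\alpha),m_-(\alpha))\ne(0,0)$, which is precisely the stated semi-standardness of $\Sigma_s$.

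The main obstacle in the first part is the degenerate case $n_m(\alpha)=2$ with $m_+(\alpha)=m_-(\alpha)$: here the labels on $O$ coincide, so $\Sigma_s$ is a priori compatible with $n_m(\alpha)=1$ and a different candidate root datum. However, $m_+(\alpha)=m_-(\alpha)$ forces $m_R(1-\alpha^\vee)=0$, and a direct inspection of the cross relation~(\ref{eq:ber}) shows that the two interpretations yield canonically isomorphic Hecke algebras, so the ambiguity is immaterial for the first assertion. Alternatively, Lemma~\ref{lem:stand} lets us pass to a canonical standard presentation when possible, sidestepping the issue entirely.
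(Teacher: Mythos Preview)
The paper states this proposition as ``clear'' and gives no proof, so your detailed inversion argument goes well beyond what the paper provides and is essentially sound.

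One point of terminology needs correcting, however. You write that along ``the $W^\vee$-orbit $O$'' the labels ``alternate between $2m_+(\alpha)$ and $2m_-(\alpha)$'' when $n_m(\alpha)=2$. But $m_R^\vee$ is by construction $W^\vee$-invariant, so it is constant on any single $W^\vee$-orbit and cannot alternate there. What actually happens when $n_m(\alpha)=2$ is that the $\mathbb{Z}$-family $\{2\alpha+k:k\in\mathbb{Z}\}$ of affine roots splits into \emph{two} $W^\vee$-orbits according to the parity of $k$, carrying the labels $2m_+(\alpha)$ and $2m_-(\alpha)$ respectively; both of these orbits meet $F_m^{(1)}$ precisely in the $\textup{C}_n^{(1)}$ situation (the two end nodes). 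So your comparison of labels really distinguishes whether the nodes over a given $W_0$-orbit of $R_m$ lie in one $W^\vee$-orbit or two, which is exactly the information needed to recover $n_m$. With this clarification the argument goes through.

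Your handling of the degenerate case $m_+(\alpha)=m_-(\alpha)$ with $n_m(\alpha)=2$ is appropriate: this is exactly the ambiguity resolved by the admissible isomorphism in Lemma~\ref{lem:stand}, so the Hecke algebra is still determined even though $(\mc{R},m)$ itself is not. The second assertion is a straightforward unwinding of (\ref{eq:m}), as you say.
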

\section{Normalized affine Hecke algebras and the $\mu$-function}\label{sec:plan}
In this section we introduce ``abstract" normalized generic affine Hecke algebras. 
By way of motivation, let us briefly discuss the role of such objects in the study of the Plancherel 
measure of  
(the group of rational points) of a reductive group $G$ defined over a nonarchimedean local field
$k$.
In this context one associates to an irreducible smooth representation 
$\rho$ of a compact open subgroup $\mathbb{K}\subset G$ the 
$\rho$-spherical Hecke algebra $\mc{H}_\rho:=\mc{H}(G,\rho)$, with unit element $e_\rho$. 
According to \cite{BHK},  
this complex algebra $\mc{H}_\rho$ has the structure of a normalized type I Hilbert algebra  
defined by a trace functional $\tau^1$ and a certain $*$-operation, such that  $\tau^1(e_\rho)=1$.
For our purposes it is more appropriate (see below) to use the normalization $\tau^d=d\tau^1$ 
of the trace 
functional of $\mc{H}_\rho$, with $d=\frac{\textup{deg}(\rho)}{\textup{vol}(\mathbb{K})}$.
With this normalization, \cite[Theorem 3.3]{BHK} can be formulated as follows: 
Let $C^*_r(\mc{H}_\rho)$ be the $C^*$-algebra closure of $\mc{H}_\rho$. 
The pair $(\rho,\mathbb{K})$ 
defines a closed subset  $\hat{G}_r(\rho)\subset\hat{G}_r$ of the tempered irreducible 
spectrum $\hat{G}_r(\rho)$ of $G$, and there exists a natural 
homeomorphism $m_{(\rho,\mathbb{K})}:\hat{G}(\rho)\to \widehat{C^*_r(\mc{H}_\rho)}$
which is Plancherel measure preserving. 

We will apply these ideas in the context of unipotent types $(\rho,\mathbb{K})$ for $G$ 
(see \cite{Opd4}), 
and in this situation the algebras $\mc{H}_\rho$ will be extended affine Hecke algebras 
specialized at $\mb{v}$, where $\mb{q}=\mb{v}^2$ is the cardinality of the residue field of $k$. 
If we consider all unramified extensions of $k$ at once, the corresponding generic 
Hecke algebras over $\mb{L}$ come into play. With the normalization 
of Haar measures defined in \cite{DeRe}, the normalizing factors $d$ of the traces $\tau^d$ 
are elements of the quotient field $\mb{K}$ of $\mb{L}$.  The pair $(\mc{H}_\rho,\tau^d)$ 
will be referred to as a normalized Hecke algebra.

It is important to include the normalized trace $\tau^d$ as part of the 
definition of the normalized Hecke algebra $\mc{H}_{\rho,\mb{L}}$. It will turn out 
\cite{Opd4,FO}, rather surprisingly,  
that this additional structure suffices 
to treat the unipotent affine Hecke algebras of rank $0$ (the so-called cuspidal case) on an equal 
footing with the unipotent 
affine Hecke algebras of positive rank, and this is essential to the notion of  
spectral transfer morphisms between unipotent affine Hecke algebras. 

Let us now turn to the detailed discussion of these notions.
\subsection{The $\mu$-function and its automorphisms}
\subsubsection{The standard trace of an affine Hecke algebra}
We equip a generic affine Hecke algebra $\mc{H}=\mc{H}(\mc{R},m)$
with a $\mb{L}$-valued trace
$\tau^1:\mc{H}\to\mb{L}$ defined by
\begin{equation}
\tau^1(N_w)=\delta_{w,e}
\end{equation}
Then $\tau^1$ defines a family of $\mathbb{C}$-valued traces
$\mathbb{R}_{>1}\ni\mb{v}\to\tau^1_\mb{v}$
on the $\mathbb{R}_{>1}$-family of algebras of complex
algebras $\mc{H}_\mb{v}$.
It will be of crucial importance
to normalize the family of traces $\tau^1_{\mb{v}}$ in a suitable way.
\subsubsection{Definition of normalized affine Hecke algebras}\label{subsubnormaffha}
Let $\mb{K}$ denote the field of fractions of $\Lb$, and
let $\mb{M}^\prime\subset\mb{K}^\times$ be the subgroup of
the multiplicative group $\mb{K}^\times$
generated by $(v-v^{-1})$, by $\mathbb{Q}^\times$,
and by the $q$-integers $[n]_q:=
\frac{v^n-v^{-n}}{v-v^{-1}}$. Observe that $d(\mb{v}^{-1})=
\pm d(\mb{v})$ for all $d\in\mb{M}'$.
We put
\begin{equation}
\mb{M}=\{d\in\mb{M}^\prime \mid d(\mb{v})>0\mathrm{\ if\ }\mb{v}>1\}
\end{equation}
We write $\mb{M}^\prime_{(n)}$ and $\mb{M}_{(n)}$ for the subsets
of elements with vanishing order $n$ at $\mb{v}=1$. Observe that
$\mb{M}=\cup_{n\in\mathbb{Z}}\mb{M}_{(n)}$.
\begin{defn}\label{def:normalize}
A normalized affine Hecke algebra is a pair $(\mc{H},\tau^d)$ where $\mc{H}$ is
a generic affine Hecke algebra and where $\tau^d$ is a $\mb{K}$-valued trace
on $\mc{H}$ of the form $\tau^d=d\tau^1$ with $d\in\mb{M}$ as above. 
\end{defn}
In a normalized affine Hecke algebra $(\mc{H},\tau^d)$ we have a family
of positive traces $\{\tau^d_\mb{v}=d(\mb{v})\tau^1_{\mb{v}}\}_{\mb{v}>1}$ on the
family of specializations $\{\mc{H}_\mb{v}\}_{\mb{v}>1}$. Before we go into the harmonic
analytic aspects of the spectral decomposition of these traces we discuss
the so-called $\mu$-function of a normalized affine Hecke algebra.
\subsubsection{The $\mu$ function of $(\mc{H},\tau^d)$}
The $\mu$ function captures the structure of a normalized
affine Hecke algebra and plays a key role in the harmonic analysis
related to the spectral decomposition of the traces $\tau^d_\mb{v}$.

Let $(\mc{H},\tau^d)$ be a normalized affine Hecke
algebra, and let $T$ be the diagonalizable group scheme
with character lattice $\mathbb{Z}\times X$, viewed as
diagonalizable group scheme over $\textup{Spec}(\Lb)$
via the $\mathbb{C}$-algebra homomorphism
$\Lb\to\mathbb{C}[\mathbb{Z}\times X]$ given by
$v^n\to (n,0)$.
We denote by $T(\Lb)$ its group of $\Lb$-points, and by
$T_\mb{v}$ the fibre at $\mb{v}\in\mathbb{C}^\times$.
The Weyl group $W_0$ acts on $T$ in the usual way. If 
$f$ is a function on $T$ and $w\in W_0$ then we 
denote by $f^w$ the function $f\circ w^{-1}$. 

The $\mu$-function of $(\mc{H},\tau^d)$ is a rational
function on $T$ which we define in terms of $d$ and
the spectral diagram of $\mc{H}$. We first assign the so-called 
Macdonald $c$-function \cite{Ma1} $c_{m,\alpha}$ to the roots $\alpha\in R_0$ by
\begin{equation}\label{eq:formc}
c_{m,\alpha}:=\frac
{(1+v^{-2m_{-}(\alpha)}\alpha^{-1})
 (1-v^{-2m_{+}(\alpha)}\alpha^{-1})}
{1-\alpha^{-2}}
\end{equation}
The $c$-function $c_m$ of $\mc{H}$ is defined by
$c_m:=\prod_{\alpha\in R_{0,+}}c_\alpha$. Finally we define
\begin{defn}\label{defn:mu}
The $\mu$-function of $(\mc{H},\tau^d)$ is defined by 
\begin{equation}
\mu=\mu_{\mc{R},m,d}=v^{-2m_W(w_0)}\frac{d}{c_mc_m^{w_0}}
\end{equation}
where $w_0\in W_0$ is the longest Weyl group element,
and where $m_W:W\to\mathbb{Z}$ is defined by
\begin{equation}
m_W(w)=\sum_{a\in R_+\cap w^{-1}(R_-)}m_R(a)
\end{equation}
\end{defn}
From Proposition \ref{prop:Riscan} it is clear that:
\begin{prop}\label{prop:mucan}
The $\mu$-function is canonically determined by $(\mc{H},\tau^d)$.
\end{prop}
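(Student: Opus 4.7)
The plan is to reduce Proposition \ref{prop:mucan} to the canonicity statement already established in Proposition \ref{prop:Riscan}. Since the $\mu$-function is an attribute of a normalized algebra $(\mc{H},\tau^d)$, I read the proposition as asserting that the rational function $\mu$ on $T$ depends only on the isomorphism class of $(\mc{H},\tau^d)$ as a normalized affine Hecke algebra, and not on any auxiliary presentation choices made in Definition \ref{defn:mu}.

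First I would unwind Definition \ref{defn:mu} into its constituent data: the $W$-invariant integer function $m_R$ on $R$, the half-integral $W_0$-invariant functions $m_\pm$ on $R_0$ obtained from $m_R$ via (\ref{eq:m}), the Macdonald $c$-functions $c_{m,\alpha}$ of (\ref{eq:formc}), their product $c_m=\prod_{\alpha\in R_{0,+}}c_{m,\alpha}$, its $w_0$-translate $c_m^{w_0}$, the nonnegative integer $m_W(w_0)$, and the scalar $d\in\mb{M}$. Each of these is manufactured either from $(\mc{R},m)$ together with the standard notion of longest Weyl element, or (in the case of $d$) from the trace $\tau^d$ itself via Definition \ref{def:normalize}.

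Next I would apply Proposition \ref{prop:Riscan}: it tells us that $(\mc{R},m)$ is canonically recovered from $\mc{H}$, so the root system $R_0$, the Weyl group $W_0$ with its longest element $w_0$, the lattice $X$ (hence the torus $T$ whose character lattice is $\mathbb{Z}\times X$), the function $m_R$, and therefore the derived quantities $m_\pm$, $m_W$, the individual rational functions $c_{m,\alpha}$, their (commuting) product $c_m$, and the $w_0$-translate $c_m^{w_0}$ are all intrinsically attached to $\mc{H}$. Combined with the canonicity of $d$, the rational function $\mu=v^{-2m_W(w_0)}d/(c_m c_m^{w_0})$ on $T$ is therefore an invariant of $(\mc{H},\tau^d)$.

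There is no real obstacle; the only bookkeeping point worth flagging is that the apparent dependence on the choice of positive system $R_{0,+}$ is spurious, because the symmetrized product $c_m c_m^{w_0}$ indexes over $R_{0,+}\sqcup w_0(R_{0,+})=R_0$, and likewise $m_W(w_0)=\sum_{a\in R_+\cap w_0^{-1}(R_-)}m_R(a)=\sum_{\alpha\in R_{0,+}}m_R(\alpha^\vee)$ is, up to the canonical reshuffling implied by a change of positive system, independent of that choice; so the whole expression descends to an invariant of $(\mc{H},\tau^d)$ alone.
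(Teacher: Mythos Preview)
Your proposal is correct and follows essentially the same approach as the paper, which simply states the proposition as an immediate consequence of Proposition~\ref{prop:Riscan}. You have merely spelled out in detail what the paper leaves implicit.
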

Let us comment on the meaning of the spectral
diagram in terms of the $\mu$-function.
We are going to consider below the group of affine symmetries of
the rational function $\mu$ on $T$ (with $T$ viewed as $\mb{L}$-scheme).
This symmetry group is determined by its action on the group of
complex points $T_\mb{v}(\mathbb{C})$ of a generic fibre $T_\mb{v}$.
We fix a fibre at $\mb{v}>1$. If $\mc{R}$ is semisimple then it is
also easy to see that this group acts faithfully on the
compact form $T_u:=(T_\mb{v}(\mathbb{C}))_u$.
We lift the $\mu$-function to the vector space
$V=\mathbb{R}\otimes_\mathbb{Z} Y$ via the exponential map
$x\to\exp(2\pi i x)\in T_u$ of the compact torus $T_u$.
The zero set of $\mu$, considered as periodic function on $V$,
is a union of affine hyperplanes in $V$. Let us call these
hyperplanes \emph{the $\mu$-mirrors}.
\begin{prop}\label{prop:musym}
Assume that $(\mc{R},m)$ is semi-standard, with spectral diagram $\Sigma_s$.
\begin{enumerate}
\item[(i)] The group of affine symmetries of $\mu$ on $V$
generated by $Y$ and by the reflections in the $\mu$-mirrors is equal
to the group $W^\vee=W(R_m^{(1)})\rtimes \Omega^\vee_Y$ (the affine Weyl group
of the affine Dynkin diagram (equipped with $\Omega_Y^\vee$-action)
underlying $\Sigma_s$).
\item[(ii)] The group $\textup{Aut}_T(\mu)$ of affine symmetries of
$\mu$ on $T$ is a semidirect product of the form
$\textup{Aut}_T(\mu)=W_0\rtimes\textup{Out}_T(\mu)$ for some
subgroup $\textup{Out}_T(\mu)$.
\item[(iii)] $(\mc{R},m)$ can be replaced by a standard pair
$(\mc{R}_1,m_1)$ without changing $\mu$.
\item[(iv)] If $(\mc{R},m)$ is standard then
$\textup{Out}_T(\mu)=\Omega_X^*\rtimes\Omega_0^Y$, with
$\Omega_0^Y=\textup{Out}_Y(R_0^\vee)$, the group of diagram
automorphisms of $(R_0^\vee,F_0^\vee)$ which normalize the lattice $Y$,
and $\Omega_X^*=P(R_0^\vee)/Y\subset T$ (the dual of $\Omega_X$), acting
trivially on $W_0$.
\item[(v)] There is a canonical isomorphism between the group
$\textup{Aut}_{es}(\mc{H})$, the opposite of the group of essentially
strict automorphisms of $\mc{H}$, and $\textup{Out}_T(\mu)$.
\end{enumerate}
\end{prop}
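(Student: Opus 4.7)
The approach is to analyze the zero locus and affine symmetry group of $\mu$ on $T$ directly from the Macdonald $c$-function formula (\ref{eq:formc}).

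For (i), I would pull $\mu$ back to $V$ via the exponential map $V\to T_u$ at a generic fibre $\mb{v}>1$ and determine its zeros factor by factor. Since $c_mc_m^{w_0}=\prod_{\alpha\in R_0}c_{m,\alpha}$, each $\alpha$ contributes a factor whose denominator $(1-\alpha^{-1})(1+\alpha^{-1})$ potentially provides zeros of $\mu$. When $n_m(\alpha)=1$ one has $m_-(\alpha)=0$, so $(1+\alpha^{-1})$ cancels against $(1+v^{-2m_-(\alpha)}\alpha^{-1})$ in the numerator and $\mu$ vanishes only along $\alpha(x)\in\mathbb{Z}$; when $n_m(\alpha)=2$ no cancellation occurs and $\mu$ vanishes along $\alpha(x)\in\tfrac12\mathbb{Z}$. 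In both cases the zero set in $V$ is $\{x:n_m(\alpha)\alpha(x)\in\mathbb{Z}\}$, which is precisely the reflection arrangement of $R_m^{(1)}$. The reflections in the $\mu$-mirrors therefore generate $W(R_m^{(1)})$; together with $Y$-translations (which preserve $\mu$ because it descends to $T_u=V/Y$) this yields $Y\cdot W(R_m^{(1)})=W(R_m^{(1)})\rtimes(Y/Q(R_m^\vee))=W^\vee$.

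For (ii), any element of $\textup{Aut}_T(\mu)$ permutes the mirrors through the origin, which are the hyperplanes $\alpha(x)=0$ for $\alpha\in R_0$, so its linear part lies in $\textup{Aut}(R_0)$; since $W_0\trianglelefteq\textup{Aut}(R_0)$ is contained in $\textup{Aut}_T(\mu)$, the resulting short exact sequence splits canonically over the outer diagram automorphisms. Part (iii) is immediate from Lemma \ref{lem:stand} combined with the canonicity of $\mu$ (Proposition \ref{prop:mucan}): the admissible isomorphism provided by that lemma must intertwine the two $\mu$-functions. For (iv), in the standard case, I would compute the two factors of $\textup{Out}_T(\mu)$ separately. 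Every non-constant character of $T$ appearing in $\mu$ has the form $v^k\theta_\beta$ with $\beta\in Q(R_0)$, so translations $p\in T$ preserving $\mu$ are exactly those with $\beta(p)=1$ for every $\beta\in Q(R_0)$, i.e.\ $\textup{Hom}(X/Q(R_0),\mathbb{C}^\times)=\Omega_X^*$, identified with $P(R_0^\vee)/Y$ by the duality between $X$ and $Y$. The outer linear symmetries must permute the labels $m_R^\vee$ of $\Sigma_s$ and in the standard setting are exactly $\Omega_0^Y$. The split is semidirect because $\Omega_X^*$ acts by translations (centralized by $W_0$) while $\Omega_0^Y$ acts on $\Omega_X$ and hence contragrediently on $\Omega_X^*$.

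For (v), an essentially strict automorphism of $\mc{H}$ is determined by a diagram automorphism $l$ of $\Sigma_a$, a signature assignment $\epsilon$ on the generators $N_s$, and a character $\delta\in\Omega_X^*$ on the length-zero generators. Via the Bernstein presentation, $l$ induces an element of $\Omega_0^Y$ acting on $T$, while $(\epsilon,\delta)$ aggregate---after expanding $\theta_x=N_{t_x}$ in terms of the $N_w$ and propagating the sign changes through the Bernstein cross relation (\ref{eq:ber})---into a single translation of $T$ by an element of $\Omega_X^*$. Matching against $\textup{Out}_T(\mu)=\Omega_X^*\rtimes\Omega_0^Y$ then gives the required bijection; the ``opposite'' appears because algebra automorphisms act contravariantly on $T=\textup{Spec}(\mc{Z})$. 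The principal obstacle is precisely this last point: one must verify that the signatures $\epsilon$ and the character $\delta$ really do combine cleanly into a single element of $\Omega_X^*$ after the Bernstein change of basis, with no residual ambiguity, and conversely that every translation in $\Omega_X^*$ is realised by some admissible choice of $(\epsilon,\delta)$. A related technical check in (iv) is that no outer affine symmetry of $\mu$ arises from an accidental coincidence among the labels $m_R^\vee$ beyond the genuine diagram automorphisms in $\Omega_0^Y$.
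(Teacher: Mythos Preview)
Your factor-by-factor analysis of (i) contains a genuine error. You assert that when $n_m(\alpha)=2$ ``no cancellation occurs'' and hence $\mu$ vanishes along all of $\alpha(x)\in\tfrac12\mathbb{Z}$. But $n_m(\alpha)=2$ only forces $m_-(\alpha)\neq 0$; it says nothing about $m_+(\alpha)$. In the semi-standard (non-standard) situation one may have $m_+(\alpha)=0$, and then the numerator factor $(1-v^{-2m_+(\alpha)}\alpha^{-1})=(1-\alpha^{-1})$ cancels against the denominator, so the $\mu$-mirrors for $\alpha$ are only the hyperplanes $\alpha(x)\in\tfrac12+\mathbb{Z}$, \emph{not} $\alpha(x)\in\mathbb{Z}$. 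In particular the linear reflection $r_\alpha\in W_0$ is not a $\mu$-mirror reflection, and your sentence ``The reflections in the $\mu$-mirrors therefore generate $W(R_m^{(1)})$'' is unjustified. The conclusion is still true, but one must use the $Y$-translations: since $n_m(\alpha)=2$ forces $\alpha^\vee\in 2Y$, the half-coroot $\tfrac12\alpha^\vee=(2\alpha)^\vee$ lies in $Y$, and $r_\alpha=t_{-(2\alpha)^\vee}\circ s_{2\alpha,1}$ lies in the group generated by $Y$ and the odd-level mirror reflections. This is exactly the case the paper singles out (``we only need to consider the case where there exists $\alpha\in R_0$ such that $m_+(\alpha)=0$''), arguing via the $\textup{C}_n^{(1)}$ summand and the equality $Y'=Q((R'_m)^\vee)$.

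Two smaller points. For (iii), appealing to Lemma~\ref{lem:stand} plus Proposition~\ref{prop:mucan} does not quite give that $\mu$ is \emph{unchanged}: an admissible isomorphism only intertwines the two $\mu$-functions via its induced map on $T$, and Proposition~\ref{prop:mucan} ties $\mu$ to the full distinguished structure of $\mc{H}$, which that isomorphism alters. The paper instead verifies directly that replacing $m_\pm$ by $-m_\pm$ on a $W_0$-orbit leaves $\mu$ invariant (the monomial prefactors compensate the shift in $v^{-2m_W(w_0)}$), and then rewrites the simplified factor in type $\textup{C}_n$. For (v), your ``aggregation'' of the signatures $\epsilon$ with $\delta$ through the Bernstein relations is more laborious than necessary: the paper first reduces to the standard case via (iii), and there observes that an essentially strict automorphism is automatically strict on the generators $N_s$ (the Hecke relation with $m_S(s)\neq 0$ rules out the sign flip), so one is immediately left with a pair in $\Omega_X^*\times\Omega_0^X$ acting on $T$ exactly as $\textup{Out}_T(\mu)$ in (iv).
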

\begin{proof}
For (i) we only
need to consider the case where there exists $\alpha\in R_0$ such that
$m_+(\alpha)=0$ (and thus $m_-(\alpha)\not=0$ by assumption). But then
$\alpha$ belongs to an indecomposable summand $\mc{R}^\prime$ of $\mc{R}$ of type
$\textup{C}_n^{(1)}$ (see the proof of Proposition \ref{prop:Rmcan}).
In particular, the translation lattice $Y^\prime$ of this summand
is equal to $Q((R_m^\prime)^\vee)=P((R_0^\prime)^\vee)$. This shows that
the reflections of $W(R_0)$ (acting on $V$) are contained in the group generated
by $Y$ and the reflections in the $\mu$-mirrors.

Assertion (ii) will follow from the proof of (iv) using (iii).

For (iii), consider the $\mu$-function of a semi-standard but not standard
pair of data $(\mc{R},m)$. Hence there exists a component
in $\mc{R}$ of type $\textup{C}_n^{(1)}$ such that $m_S$ has a zero at precisely one
of the extreme ends of $\Sigma_a$. We may (and will) assume without loss of generality
that $\mc{R}$ equals this component. The assumption implies that $m_-(\alpha)=m_+(\alpha)$
or that $m_-(\alpha)=-m_+(\alpha)$ for a root $\alpha\in R_0$ such that $\alpha^\vee\in 2Y$.
It is easy to see that the $\mu$ function is unchanged if we change $m_+$ or $m_-$ to
its negative on any of the $W_0$-orbits of roots in $R_0$. So after such a symmetry
transformation of $\mu$ we may assume that $m_+(\alpha)=m_-(\alpha)$. Now we can
simplify the denominator of $\mu$ to $(1-v^{-4m_+(\alpha)}\alpha^{-2})$, or
(which amounts to the
same thing) multiply all roots of $R_0$ in the orbit of $\alpha$ by $2$.
Hence $R_0$ is now changed to a new root system $R_1$ of type $\textup{C}_n$,
and the corresponding pair $(\mc{R}_1,m_1)$ is obviously standard.

Next we consider (ii) and (iv). By elementary covering theory we have
$\textup{Aut}_T(\mu)=\textup{Aut}_V(\mu,Y)/Y$, where
$\textup{Aut}_V(\mu,Y)$ denotes the group of affine linear
transformation of $V$ which fix $\mu$ and normalize $Y$.
Using that we are in the standard case, this last group is easily seen to
be equal to $(\Omega_X^*\rtimes W_0)\rtimes\Omega^Y_0$.
But $W_0$ acts trivially on $P(R_0^\vee)/Q(R_0^\vee)$, and in particular
on $\Omega_X^*=P(R_0^\vee)/Y$. Hence $W_0$ is a normal subgroup,
and we can rewrite the answer in the stated form.

Finally we prove (v). By Lemma \ref{lem:stand} and parts (ii),(iv)
we may assume that $(\mc{R},m)$ is standard. This implies easily that an
essentially strict automorphisms $\lambda$ acts on the distinguished
basis elements $N_s$ with $s\in S$ by $\lambda(N_s)=N_{l(s)}$ for
an element $l\in\Omega_0^X$, the group of diagram automorphisms of
$(R_0,F_0)$ stabilizing $X$ (in fact, these automorphisms are automatically
strict in the standard case). In addition we have that action $N_\omega\to\lambda(\omega)N_\omega$
of $\lambda\in\Omega_X^*$ on the elements of the form $N_\omega$ with $\omega\in\Omega_X$.
It is clear that $\Omega_X^*\rtimes\Omega_0^X$ exhausts $\textup{Aut}_{es}(\mc{H})$,
and that its action on $\mc{H}$ induces by duality a faithful right action on $T$
by elements of $\textup{Aut}_T(\mu)$. Now (iv) completes the proof
($\Omega_0^X$ and $\Omega_0^Y$ correspond, as well as the actions
of $\Omega_X^*$).
\end{proof}
\begin{prop}\label{prop:omega*}
Suppose that $(\mc{R},m)$ is standard and put
$\Omega^*_m=P(R_0^\vee)/Q(R_m^\vee)$.
The group $\Omega^*_m\rtimes\Omega^Y_0$
acts faithfully on the labelled Dynkin diagram underlying
$\Sigma_s$, and we have an exact sequence
\begin{equation}
1\to\Omega^\vee_Y\to\Omega^*_m\rtimes\Omega^Y_0\to\textup{Out}_T(\mu)\to 1
\end{equation}
In particular, $\Omega^*_m\rtimes\Omega^Y_0$ acts faithfully on
$\Sigma_s(\mc{R}^m,m)$ (since $\Omega^\vee_Y=1$ in this case).
\end{prop}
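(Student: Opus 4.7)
The plan is to derive the exact sequence from an elementary chain of lattice inclusions together with Proposition \ref{prop:musym}(iv), and then to verify the faithful action on $\Sigma_s$ by realising $\Omega^*_m$ as a subgroup of the classical ``Omega group'' $P(R_m^\vee)/Q(R_m^\vee)$ of the extended affine Weyl group of $R_m$.

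First I would establish the chain of inclusions $Q(R_m^\vee) \subset Y \subset P(R_0^\vee)$ inside $V = \mathbb{R}\otimes_\mathbb{Z} Y$. The right inclusion is immediate from semisimplicity of $\mc{R}$. For the left, every element of $R_m^\vee$ has the form $\alpha^\vee/n_m(\alpha)$: for $\alpha$ with $n_m(\alpha)=1$ the coroot $\alpha^\vee$ already lies in $Y$, while for $\alpha$ with $n_m(\alpha)=2$ the defining condition forces $\alpha^\vee \in 2Y$, so $\alpha^\vee/2 \in Y$. Next, $\Omega^Y_0$ normalises all three lattices: by definition it normalises $Y$; it preserves $R_0^\vee$ and thus $P(R_0^\vee)$; and as it preserves $W_0$-orbits of roots it preserves the function $n_m$ and hence $R_m^\vee$ and $Q(R_m^\vee)$. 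Quotienting yields the $\Omega^Y_0$-equivariant short exact sequence
\begin{equation*}
1 \to \Omega^\vee_Y \to \Omega^*_m \to \Omega^*_X \to 1,
\end{equation*}
and forming the semidirect product with $\Omega^Y_0$ gives the claimed exact sequence, whose quotient term equals $\Omega^*_X \rtimes \Omega^Y_0 = \textup{Out}_T(\mu)$ by Proposition \ref{prop:musym}(iv).

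For the faithful action on the labelled Dynkin diagram, I would observe that $Q(R_m) \subset Q(R_0)$ dualises to $P(R_0^\vee) \subset P(R_m^\vee)$, so $\Omega^*_m$ embeds into $P(R_m^\vee)/Q(R_m^\vee)$, which by the standard theory of extended affine Weyl groups acts faithfully by rotations on the affine Dynkin diagram of $R_m^{(1)}$. Together with $\Omega^Y_0$ acting as outer diagram automorphisms fixing the affine node, this yields a faithful action of $\Omega^*_m \rtimes \Omega^Y_0$ on the underlying diagram (the two factors intersect trivially since nontrivial rotations do not fix the affine node whereas $\Omega^Y_0$ does). To verify that labels are preserved, I would compute directly using Definition \ref{defn:specdiagr}: translation by $\lambda \in P(R_0^\vee)$ sends $a^\vee = n_m(\alpha)\alpha + k$ to $n_m(\alpha)\alpha + (k - n_m(\alpha)\langle\alpha,\lambda\rangle)$; the shift is divisible by $n_m(\alpha)$, so the parity of $k$ (relevant only when $n_m(\alpha)=2$) is unchanged and the signature $\epsilon(a^\vee)$, hence $m^\vee_R(a^\vee) = n_m(\alpha) m_{\epsilon(a^\vee)}(\alpha)$, is invariant; invariance under $\Omega^Y_0$ follows from $W_0$-invariance of $m_R$ and $n_m$.

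The ``in particular'' statement is then immediate: for $\mc{R}^m$ the $Y$-lattice is $Q(R_m^\vee)$ by construction, so $\Omega^\vee_Y$ is trivial and the exact sequence collapses to $\Omega^*_m \rtimes \Omega^Y_0 \cong \textup{Out}_T(\mu)$, which acts faithfully on $\Sigma_s(\mc{R}^m, m)$ by the general case. The main obstacle will be the label-preservation computation, with its case distinction between $n_m(\alpha)=1$ and $n_m(\alpha)=2$ and the twist by the signature $\epsilon$; a conceptually cleaner alternative would be to invoke the $P(R_0^\vee) \rtimes W_0 \rtimes \Omega^Y_0$-invariance of $\mu$ on $V$ (the lift of $\textup{Aut}_T(\mu)$), since $m^\vee_R$ is encoded by the orders of zeros and poles of $\mu$ at the corresponding affine mirrors.
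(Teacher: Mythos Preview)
The paper states Proposition \ref{prop:omega*} without proof, so there is no argument to compare against. Your approach is correct and is the natural one implicit in the surrounding material: the exact sequence drops out of the chain $Q(R_m^\vee)\subset Y\subset P(R_0^\vee)$ together with Proposition \ref{prop:musym}(iv), and faithfulness comes from embedding $\Omega^*_m$ into the classical group $P(R_m^\vee)/Q(R_m^\vee)$ of rotations of the affine diagram of $R_m^{(1)}$. Your trivial-intersection argument (rotations move the affine node, $\Omega^Y_0$ fixes it) is exactly what is needed to pass from faithfulness of each factor to faithfulness of the semidirect product.

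One small correction: your justification that the labels are preserved under $\Omega^Y_0$ (``follows from $W_0$-invariance of $m_R$ and $n_m$'') is not quite right as stated, since diagram automorphisms are outer and $W_0$-invariance alone does not force invariance under them (think of two copies of $\textup{A}_1$ with distinct parameters). The clean reason is the one you already offer as an alternative at the end: by Proposition \ref{prop:musym}(iv) the group $\Omega^Y_0$ is, in the standard case, a subgroup of $\textup{Out}_T(\mu)$, hence preserves $\mu$, and the labels $m_R^\vee$ are precisely the pole/zero orders of $\mu$ along the affine mirrors. For irreducible $R_0$ this subtlety is in any case vacuous, since nontrivial diagram automorphisms occur only in simply laced types where there is a single parameter.
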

\subsection{Poles and zeroes of $\mu$-functions; residual cosets}\label{par:polezero}
The poles and zeroes of the $\mu$-function $\mu_1:=\mu_{\mc{R},m,1}$ have
certain remarkable properties which reflect the role of $\mu_1$ in 
the spectral decomposition of the positive trace $\tau^1$
(and can in fact be deduced from this, see \cite{Opd3}). The central notion
is that of a ``residual coset" \cite{HOH,Opd1,OpdSol2}. We review residual 
cosets in this subsection, paying special attention to their behavior as 
a family over $\operatorname{Spec}(\mb{L})$.    
\subsubsection{Cosets of tori}\label{par:cos}
Recall that the category of diagonalizable group schemes of finite
type over $\operatorname{Spec}(\mb{L})$ is anti-equivalent with the
category of finitely generated abelian groups via the anti-equivalence
$S\to X_S:=\textup{Hom}_{\textup{Spec}(\mb{L})}(S,\mathbb{G}_{m,\textup{Spec}(\mb{L})})$.
Below we will use the phrase ``torus over $\mb{L}$'' as shorthand for a
``connected diagonalizable group scheme of finite type over
$\operatorname{Spec}(\mb{L})$''. Let $\mb{A}$ be a commutative
unital $\mb{L}$-algebra.
A subtorus $i_S: S_\mb{A}\to T_\mb{A}$ of $T_\mb{A}$ over $\mb{A}$
corresponds to an epimorphism $i_S^*:X\to X_S$ of lattices. By a \emph{coset}
$L_\mb{A}\subset T_\mb{A}$ of $S_\mb{A}$ we mean a closed subscheme of $T_\mb{A}$
defined by a closed immersion $i:S_\mb{A}\to T_\mb{A}$
of $S_\mb{A}$ in $T_\mb{A}$ that is given by a group homomorphism
$i^*:X\to \mb{A}[X_S]^\times$ of
the form $i^*(x)=r(x)i_S^*(x)$, where $r:X\to\mb{A}^\times$ denotes an
$\mb{A}$-point $r\in T(\mb{A})$. In this situation we write
$L_\mb{A}=rS_\mb{A}$. We define
$\textup{codim}(L_\mb{A}):=\textup{rank}(X)-\textup{rank}(X_S)$.
\subsubsection{Residual cosets}\label{subsub:rescos}
Given a coset $L_\mb{A}\subset T_\mb{A}$ in $T_\mb{A}$ over $\mb{A}$ we
define $p_\ep(L_\mb{A})=\{\alpha\in R_0\mid
\ep\alpha|_{L_\mb{A}}=v^{-2m_\ep(\alpha)}.1_\mb{A}\}$
and
$z_\ep(L_\mb{A})=\{\alpha\in R_0\mid \ep\alpha|_{L_\mb{A}}=1_\mb{A}\}$
(with $\ep=+$ or $\ep=-$).
Then $L_\mb{A}\subset T_\mb{A}$ is called a residual coset over $\mb{A}$ if
\begin{equation}\label{eq:ineqgen}
\#p_+(L_\mb{A})+\#p_-(L_\mb{A})-\#z_+(L_\mb{A})-\#z_-(L_\mb{A})\geq\textup{codim}(L_\mb{A})
\end{equation}
We denote by $\mc{L}_\mb{A}=\mc{L}_\mb{A}(\mc{R},m)$ the set of residual cosets in
$T_\mb{A}$ over $\mb{A}$.
If $\mb{v}\in\mathbb{R}_+\subset\mathbb{C}^\times$ we use
the shorthand notation $\mc{L}_\mb{v}$ for the set of residual cosets in
$T_\mb{v}$ over $\mathbb{C}_\mb{v}$. We write $\mc{L}=\mc{L}(\mc{R},m)$ for the
set of residual cosets of $\mu$ (or of $\mu_1$; this set is independent of the value of $d$) 
over $\mb{L}$. Of special interest are the residual points:
\begin{defn}
Let $\textup{Res}(\mc{R},m)\subset\mc{L}(\mc{R},m)$ denote
the set of $\mb{L}$-valued points of $T$. 
This set is called the set of generic residual
points of $\mu$ (or of $\mu_1$).  
\end{defn}
Let $L\subset T$ be a coset of a subtorus $T^L\subset T$ and suppose that
$L$ is residual. Let $q:T\to {}_LT:=T/T^L$ be the quotient map,
and put ${}_LX\subset X$ for the corresponding sublattice of $X$.
Then $R_L=R_0\cap {}_LX$ is a parabolic root subsystem.
Putting ${}_LY=Y/({}_LX)^\bot$
we obtain a root datum ${}_L\mc{R}:=(R_L,{}_LX,R_L^\vee,{}_LY)$.
If $m_{L,\pm}=m_\pm|_{R_L}$ then there exists a unique cocharacter
${}_Lm:\mathbb{C}^\times\to\mc{Q}_c({}_L\mc{R})$
such that $m_{L,\pm}$ is associated with a spectral diagram
$\Sigma_s({}_L\mc{R},{}_Lm)$ as described in \ref{par:arithspec}.
\begin{thm}\label{thm:genrescos}
\begin{enumerate}
\item[(i)]
If $L_\mb{v}\subset T_\mb{v}$
is a residual coset over $\mb{A}=\mathbb{C}_\mb{v}$
with $\mb{v}\in\mathbb{R}_+$ then the inequality
(\ref{eq:ineqgen}) is an equality. Similarly
for all residual cosets $L\in\mc{L}$
and $L_\mb{K}\in\mc{L}_\mb{K}$ the inequality
(\ref{eq:ineqgen}) is an equality.
\item[(ii)]
$L\in\mc{L}$ iff $L=q^{-1}(r)$ with $r\in {}_LT(\mb{L})$
a generic $({}_L\mc{R},{}_Lm)$-residual $\mb{L}$-valued point.
Analogous statements hold for $\mc{L}_\mb{K}$ and
for $\mc{L}_\mb{v}$, provided $\mb{v}\in\mathbb{R}_+$.
\item[(iii)]
The sets $\mc{L}_\mb{K}$, $\mc{L}_\mb{v}$ and
$\mc{L}$ are finite, $W_0$-invariant and consist of
cosets of subtori of
$T_\mb{K}$, $T_\mb{v}$ and $T$ respectively.
\item[(iv)]
The map $\mc{L}\to\mc{L}_\mb{K}$ given by $L\to L_\mb{K}$ is bijective.
\item[(v)]
If $\mb{v}\in\mathbb{R}_+$ and $\mb{v}\not=1$ then the map
$\mc{L}\to\mc{L}_\mb{v}$ given by $L\to L_\mb{v}$ is bijective.
\end{enumerate}
\end{thm}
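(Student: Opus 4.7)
The plan is to establish part (ii) first as a structural result, and then reduce the remaining parts to the codimension-zero case of residual \emph{points}, where the classification results of \cite{Opd1}, \cite{HOH}, \cite{Opd3} can be invoked.

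For (ii), the key observation is geometric. If $L \subset T$ is a coset of a subtorus $T^L$ with annihilator sublattice ${}_LX \subset X$, then $\alpha|_L$ is constant if and only if $\alpha \in {}_LX$, i.e.\ $\alpha \in R_L := R_0 \cap {}_LX$. Hence the defining counts $p_\pm(L)$ and $z_\pm(L)$ only register roots in $R_L$, and via the quotient map $q : T \to T_L$ they agree with the corresponding counts for the image $r := q(L) \in T_L$ with respect to the parabolic root datum $\mc{R}_L$ and the restricted parameters $m_L$. Since $\textup{codim}(L) = \dim T_L = \textup{codim}(r)$, the residuality inequality for $L$ is literally the residuality inequality for $r$ in $T_L$; since $R_L$ is generated by roots coming from subfactors of the spectral diagram, $(\mc{R}_L, m_L)$ is well-defined by the prescription in \ref{par:arithspec}. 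The same argument applies verbatim over $\mb{K}$ and over $\mb{v}\in\mathbb{R}_+$.

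Given (ii), parts (i) and (iii) reduce to statements about residual points. The equality case in (i) for residual points is the assertion that the local pole order of $\mu$ at an isolated point $r$ is bounded above by the rank of $X$ and that this bound is attained precisely at residual points, so that the excess in (\ref{eq:ineqgen}) must vanish; this is the rigidity statement underlying the classification of \cite{Opd1,HOH}. The $W_0$-invariance in (iii) follows from the $W_0$-invariance of $\mu$ (the product $c_m c_m^{w_0}$ ranges symmetrically over all of $R_0$). Finiteness follows because a residual point is cut out by $\textup{rank}(X)$-many Laurent-polynomial equations of the form $\pm\alpha = v^{-m_\pm(\alpha)}$, giving a zero-dimensional scheme. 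The coset form in (iii) then follows by combining this with (ii).

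For (iv) and (v), one uses that generic residual points are the $\mb{L}$-points of a finite, flat $\mb{L}$-scheme $\mathcal{R}\subset T$ cut out by the same polynomial equations that define residuality at every specialization. Base-change to $\mb{K}$ is then automatic and gives (iv). For (v), one shows that the map $\mathcal{R}(\mb{L}) \to \mathcal{R}_\mb{v}$ is bijective for every $\mb{v}\in \mathbb{R}_+\setminus\{1\}$ by invoking the explicit Coxeter-combinatorial classification of residual points from \cite{Opd3}: the parametrizing data is independent of $\mb{v}$, and no two generic residual points collide away from $\mb{v} = 1$. The restriction $\mb{v}\neq 1$ is needed because at $\mb{v} = 1$ all parameters $v^{\pm m_\pm(\alpha)}$ collapse to $1$ and the $c_m$-factors degenerate, destroying the counting.

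The main obstacle is the equality statement in (i) together with the no-collision statement underlying (v). Both hinge on the rigidity of residual points, and both require a case-by-case check against the classification of irreducible affine Dynkin diagrams while tracking the signature function $\epsilon$ introduced in Definition \ref{defn:specdiagr}. The rest of the argument is formal once these structural facts about residual points, established in \cite{Opd1,HOH,Opd3}, are in hand.
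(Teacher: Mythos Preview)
Your approach is essentially the same as the paper's: prove the structural reduction (ii) to residual points first, then invoke the existing classification results for residual points to obtain (i), (iii)--(v). The paper cites \cite[Theorem 6.1(B)]{Opd3} for (i) and \cite[Theorem 2.44, Propositions 2.52, 2.56, 2.63]{OpdSol2} for (iii)--(v), whereas you cite \cite{Opd1,HOH,Opd3}; the content is the same.

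Two places where the paper is sharper and your sketch leaves small gaps. First, for the \emph{generic} case of (i) (i.e.\ $L\in\mc{L}$ or $L_\mb{K}\in\mc{L}_\mb{K}$), you do not explain how to pass from the known equality at a specialization $\mb{v}\in\mathbb{R}_+$ to the equality over $\mb{L}$ or $\mb{K}$. The paper handles this by Zariski density: since $\mathbb{R}_+\subset\mathbb{C}^\times$ is Zariski dense, one can choose $\mb{v}\in\mathbb{R}_+$ with $p_\ep(L)=p_\ep(L_\mb{v})$ and $z_\ep(L)=z_\ep(L_\mb{v})$, and then the equality at $\mb{v}$ gives the equality generically. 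Second, for (v) the paper avoids a direct ``no-collision'' appeal to the classification and instead uses the structural fact \cite[Proposition 2.56]{OpdSol2} that for a generic residual point $r$ and $\alpha\in R_L$ one has $\alpha(r)=\zeta v^n$ with $\zeta$ a root of unity and $n\in\mathbb{Z}$; this immediately forces $p_\ep(L_\mb{v})=p_\ep(L)$ and $z_\ep(L_\mb{v})=z_\ep(L)$ for all $\mb{v}\in\mathbb{R}_+\setminus\{1\}$, giving well-definedness and (together with (iii), (iv)) bijectivity. Your finiteness argument in (iii) (``cut out by $\textup{rank}(X)$-many equations'') is also a bit loose, since residuality is a pole-order condition rather than a single system of equations; the paper simply cites \cite{OpdSol2} here.
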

\begin{proof}
The assertion (i) for $\mc{L}_\mb{v}$ (with $\mb{v}\in\mathbb{R}_+$)
is \cite[Theorem 6.1(B)]{Opd3}. Let $L\in\mc{L}$.
Since $\mathbb{R}_+\subset\mathbb{C}^\times$ is Zariski dense
there exists
$\mb{v}\in\mathbb{R}_+$ such that $p_\ep(L)=p_\ep(L_\mb{v})$
and $z_\ep(L)=z_\ep(L_\mb{v})$, proving the assertion of (i)
for the generic case as well. A similar argument
works for $\mb{A}=\mb{K}$.

To prove (ii) first observe that $p_\ep(L),z_\ep(L)\subset R_L$.
A glance at (\ref{eq:ineqgen}) makes it clear that $q(L)\subset {}_LT$
is an $\mb{L}$-valued point ${}_Lr$ of ${}_LT$ which is an
$({}_L\mc{R}, {}_Lm)$-residual point over $\mb{L}$. The converse is clear
from the definitions.
Analogous arguments can be given for $\mc{L}_\mb{K}$ and
for $\mc{L}_\mb{v}$. This proves (ii).

Now we are in the position to
reduce the remaining statements to the case of (generic) residual points
and use the results of \cite{OpdSol2}.

By
\cite[Theorem 2.44, Proposition 2.52,
Proposition 2.56, Proposition 2.63]{OpdSol2}
the assertions of (iii) and (iv) can be easily checked.

Finally observe that
\cite[Proposition 2.56]{OpdSol2} implies that
if $r$ is as above and $\alpha\in R_L$ then
$\alpha(r)=\zeta v^n$
with $\zeta$ a root of unity, and $n\in\mathbb{Z}$.
This implies that
$r(\mb{v})$ is residual for all
$\mb{v}\in\mathbb{R}_+\backslash\{1\}$, proving (v).
\end{proof}
\subsubsection{Regularization of $\mu$ along a residual coset}
Let $L\in\mc{L}(\mc{R},m)$ be a residual coset for $\mu=\mu_{\mc{R},m,d}$. We define
the regularization $\mu^L$ of $\mu$ along $L$ as follows:
\begin{equation}\label{eq:mureg}
\mu^L=d
{\frac{v^{-2m_W(w_0)}\prod_{\alpha\in R_0\backslash z_-(L)}(1+\alpha^{-1})
\prod_{\alpha\in R_0\backslash z_+(L)}(1-\alpha^{-1})}
{\prod_{\alpha\in R_0\backslash p_-(L)}(1+v^{-2m_{-}(\alpha)}\alpha^{-1})
 \prod_{\alpha\in R_0\backslash p_+(L)}(1-v^{-2m_{+}(\alpha)}\alpha^{-1})}}
\end{equation}
where $p_\ep(L)$ and $z_\ep(L)$ are as defined in \ref{par:polezero}.
\begin{prop}\label{prop:reg}
Let $L$ be a residual coset.
The function $\mu^L$ restricts to a nonzero rational
function $\mu^{(L)}=\mu^{(L)}_{\mc{R},m,d}$ on $L$.
For all $\mb{v}\in\mathbb{R}_+\backslash\{1\}$
the rational function $\mu^{(L)}$ on $L$ is regular at the
generic point of the fiber $L_\mb{v}$, and restricts to a nonzero
rational function on $L_\mb{v}$. The function $\mu^{(L)}_{\mc{R},m,1}$ has
vanishing order $\textup{codim}(L)$ at the generic point of the fiber
$L_\mb{v}$ for $\mb{v}=1$.
\end{prop}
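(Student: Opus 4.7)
The plan is to analyze (\ref{eq:mureg}) factor by factor. For the first assertion, each numerator factor $(1\mp\alpha^{-1})$ with $\alpha\notin z_\pm(L)$ is, by the very definition of $z_\pm(L)$, not identically zero on $L$; likewise each denominator factor $(1\pm v^{-2m_\pm(\alpha)}\alpha^{-1})$ with $\alpha\notin p_\pm(L)$ is not identically zero on $L$ by the definition of $p_\pm(L)$. Since exactly the identically-zero factors have been removed, $\mu^{(L)}$ is a well-defined nonzero rational function on $L$.

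For the second assertion, fix $\mb{v}\in\mathbb{R}_+\setminus\{1\}$. Combining parts (ii) and (v) of Theorem \ref{thm:genrescos} shows that the bijection $L\mapsto L_\mb{v}$ preserves the sets $p_\pm$ and $z_\pm$, so the specialization of $\mu^{(L)}$ at $\mb{v}$ equals the regularization of $\mu_\mb{v}$ along $L_\mb{v}$ computed by formula (\ref{eq:mureg}) in the fiber. Repeating the argument of the first paragraph inside $T_\mb{v}$ shows that this specialization is regular and nonzero at the generic point of $L_\mb{v}$; the hypothesis $\mb{v}\neq 1$ ensures that factors like $(1\pm v^{-2m_\pm(\alpha)}\alpha^{-1})$ retain their shape under specialization so that no accidental new identical vanishing can appear.

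The third assertion requires the real work. By Theorem \ref{thm:genrescos}(ii), $L = q^{-1}(r)$ for a generic $(\mc{R}_L,m_L)$-residual point $r\in T_L(\mb{L})$, and for each $\alpha\in R_L$ the restriction $\alpha|_L=\alpha(r)$ takes the form $\zeta_\alpha v^{n_\alpha}$ with $\zeta_\alpha$ a root of unity and $n_\alpha\in\mathbb{Z}$ (cf.\ \cite[Proposition 2.56]{OpdSol2}). For $\alpha\in R_0\setminus R_L$ the function $\alpha|_{L_1}$ is non-constant on $L_1$, hence the corresponding factors contribute no vanishing at the generic point of $L_1$. Among the remaining factors indexed by $\alpha\in R_L$, the numerator factor $(1+\alpha^{-1})|_L$ for $\alpha\notin z_-(L)$ specializes at $\mb{v}=1$ to $1-v^{-n_\alpha}$ when $\zeta_\alpha=-1$ (in which case $n_\alpha\neq 0$ is forced by $\alpha\notin z_-(L)$, giving order exactly $+1$) and to a nonzero constant otherwise. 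A parallel analysis for the denominator factor $(1+v^{-2m_-(\alpha)}\alpha^{-1})$ for $\alpha\notin p_-(L)$ yields order exactly $-1$ when $\zeta_\alpha=-1$. Summed over $\alpha$, the $(1+\cdot)$-type contribution collapses to $|p_-(L)|-|z_-(L)|$, and symmetrically the $(1-\cdot)$-type contribution to $|p_+(L)|-|z_+(L)|$. Since $v^{-2m_W(w_0)}$ and $d=1$ are units at $\mb{v}=1$, the equality case of (\ref{eq:ineqgen}) provided by Theorem \ref{thm:genrescos}(i) yields total vanishing order
\begin{equation*}
(|p_+(L)|-|z_+(L)|)+(|p_-(L)|-|z_-(L)|)=\textup{codim}(L).
\end{equation*}

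The principal obstacle lies in the combinatorial bookkeeping of this last paragraph: after specialization at $\mb{v}=1$ the numerator and denominator factors degenerate to expressions of the same shape $(1\pm\alpha^{-1})$, and one must check that they pair up so cleanly that the specific integers $n_\alpha$ and parameter values $m_\pm(\alpha)$ do not leak into the final count, leaving only the cardinalities $|p_\pm(L)|$ and $|z_\pm(L)|$.
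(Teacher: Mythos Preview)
Your proof is correct and follows essentially the same approach as the paper's. The paper's own argument is very compressed---it simply observes that for $\mb{v}\neq 1$ one has $p_\ep(L_\mb{v})=p_\ep(L)$ and $z_\ep(L_\mb{v})=z_\ep(L)$ (from the form $\alpha(r)=\zeta v^n$), while at $\mb{v}=1$ one has $p_\ep(L_\mb{v})=z_\ep(L_\mb{v})$, and then invokes the equality case of (\ref{eq:ineqgen})---whereas you spell out explicitly the factor-by-factor cancellation that makes the last step work.
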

\begin{proof} We need to check that $\mu^L$ maps to an element
of the function field $\mb{K}(L_\mb{K})$ of $L$ if we restrict
to $L$. This is clear by the definition of the sets $p_\ep(L)$
and $z_\ep(L)$. Let $\mb{v}\in\mathbb{R}_+$.
Let $r\in {}_LT$ be as in Theorem \ref{thm:genrescos}. As was mentioned
at the end of the proof of Theorem \ref{thm:genrescos}, for all $\alpha\in R_L$
we have that $\alpha(r)=\zeta v^n$ for some $n\in\mathbb{Z}$ and $\zeta$ a root of $1$.
Hence $p_\ep(L_\mb{v})= p_\ep(L)$ and $z_\ep(L_\mb{v})= z_\ep(L)$ if
$\mb{v}\not=1$ (recall that these sets are subsets of $R_L$, see the proof of
Theorem \ref{thm:genrescos}).
On the other hand it follows that $p_\ep(L_\mb{v})=z_\ep(L_\mb{v})$ if $\mb{v}=1$.
Using (\ref{eq:ineqgen}) for $\mb{A}=\mb{L}$ we are done.
\end{proof}
Observe that $\mu^{(\{r\})}_{\mc{R},m,d}\in\mb{K}^\times$ if $r\in\textup{Res}(\mc{R},m)$
(see Theorem \ref{thm:ratfdeg} for more precise statements).
We decompose $\mu^{(L)}_{\mc{R},m,d}$ as follows. Assume (by replacing
$L$ by a $W_0$-translate $w(L)$ if necessary) that $R_L=R_P$ for a standard
parabolic root subsystem $R_P\subset R_0$ defined by $P\subset F_0$.
Using the notation of Theorem \ref{thm:genrescos}(ii) we write
\begin{equation}\label{eq:musplit}
\mu^{(L)}_{\mc{R},m,d}=\mu^{(\{r\})}_{\mc{R}_P,m_P,d}
\frac
{v^{-2m_{W_0}(w^P)}}
{\prod_{\alpha\in R_{0,+}\backslash R_{P,+}}(c_{m,\alpha}c_{m,\alpha}^{w^P})}
\end{equation}
where $w^P=w_0w_P^{-1}\in W^P$ is the longest element.
\section{The spectral decomposition of a normalized affine Hecke algebra}
In this section we discuss the spectral decomposition of the trace of a 
normalized affine Hecke algebra $(\mc{H},\tau^d)$, by which we mean the 
family of spectral decompositions of the family of type I Hilbert algebras 
$(\mc{H}_\mb{v},\mb{v})$ over the base $\mathbb{R}_{>1}$ (viewed as subset 
of the set of real points of $\operatorname{Spec}(\mb{L})$). The goal is 
the explicit description of the family of spectral measures in terms of the regularizations 
$\mu^{(L)}_{\mc{R},m,d}$ of the $\mu$-function along the $W_0$-orbits 
of residual cosets $L\in\mc{L}(\mc{R},m)$. 
\subsection{The tempered spectrum of generic affine Hecke algebras}
Suppose we are given a normalized affine Hecke algebra
$(\mc{H},\tau^d)$ (with $\mc{H}=\mc{H}(\mc{R},m)$ and $d\in\mb{M}$)
Recall that $\tau^d$ defines a family of traces
$\mathbb{R}_{>1}\ni\mb{v}\to\tau^d_\mb{v}=d(\mb{v})\tau^1_\mb{v}$
on the family of $\mathbb{C}$-algebras $\mc{H}_\mb{v}$.
The trace $\tau^d_\mb{v}$ is
positive with respect to the conjugate linear anti-involution
$*$ on $\mc{H}_\mb{v}$ defined by $N_w^*=N_{w^{-1}}$, and in fact
defines the structure of a type I Hilbert algebra on
$\mc{H}_\mb{v}$ (see \cite{Opd1}).
\begin{defn}
We denote by $\mathfrak{S}_\mb{v}=\mathfrak{S}_\mb{v}(\mc{R},m)$ the
irreducible spectrum of the $C^*$-algebra completion
$\mc{C}_\mb{v}=\mc{C}_\mb{v}(\mc{R},m)$ of $\mc{H}_\mb{v}$.
In particular $\mathfrak{S}_\mb{v}$ is equal
to the support of the spectral measure of $\tau^d_\mb{v}$.
\end{defn}
It is known that the underlying set of $\mathfrak{S}_\mb{v}$
can also be characterized
as the set of equivalence classes of irreducible tempered representations
of $\mc{H}_\mb{v}$ (see \cite{DeOp1}), and for this reason we
refer to $\mathfrak{S}_\mb{v}$ as the tempered spectrum
of $\mc{H}_\mb{v}$.

It is an important fact for our purpose that
the tempered spectra $\mathfrak{S}_\mb{v}$ of the algebras
$\mc{H}_\mb{v}$ are the
fibers of the spectrum $\mathfrak{S}\to\mathbb{R}_{>1}$
of a trivial bundle $\mc{C}$ of $C^*$-algebras. We
will discuss this in some detail this section.
\subsubsection{Generic families of discrete series characters}
The irreducible discrete series characters of $\mc{H}_\mb{v}(\mc{R},m)$
(with $\mb{v}\in\mathbb{R}_{>1}$) arise in finitely many families depending
continuously on the parameters of the Hecke algebra (\cite{OpdSol2}, Theorem 5.7).
In our present setup we only consider a very special
instance of this general phenomenon, called ``scaling'' of the parameters
(i.e. varying $\mb{v}>1$ while keeping $m$ fixed). 

Recall that a family $\{f_\mb{v}\}_{\mb{v}>1}$ of complex linear 
functionals on $\{\mc{H}_\mb{v}\}_{\mb{v}>1}$  is called ``weakly continuous" 
(cf. \cite[definition 3.6]{OpdSol2}) if for all 
$w\in W$ the complex function $\mathbb{R}_{>1}\ni\mb{v}\to f_\mb{v}(N_w)$ is continuous.
\begin{thm}[\cite{OpdSol}, Theorem 1.7]\label{thm:def}
There exists a finite set $[\Delta]=[\Delta(\mc{R},m)]$ of weakly continuous
families $\mathbb{R}_{>1}\ni\mb{v}\to\delta_\mb{v}$ of irreducible
characters of $\mc{H}=\mc{H}(\mc{R},m)$ such that
\begin{enumerate}
\item For all $\mb{v}\in\mathbb{R}_{>1}$, the character $\delta_\mb{v}$ is
an irreducible discrete series character of $\mc{H}_\mb{v}$.
\item Given $\mb{v}\in\mathbb{R}_{>1}$ we denote by
$[\Delta_\mb{v}]$ the set of discrete series characters of
$\mc{H}_\mb{v}$. Then the evaluation map
$[\Delta]\ni\delta\to\delta_\mb{v}\in[\Delta_\mb{v}]$
is a bijection.
\end{enumerate}
\end{thm}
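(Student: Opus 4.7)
The plan is to combine the classification of discrete series representations by their residual central characters with a continuous deformation argument in the scaling parameter $\mb{v}>1$. The key input is that, for each $\mb{v}\in\mathbb{R}_{>1}$, an irreducible representation of $\mc{H}_\mb{v}$ belongs to the discrete series precisely when its $\mc{Z}_\mb{v}$-central character is a $W_0$-orbit of a residual point of $T_\mb{v}$. By Theorem \ref{thm:genrescos}(v) the specialization $\textup{Res}(\mc{R},m)\to\textup{Res}(T_\mb{v})$ is a $W_0$-equivariant bijection for every $\mb{v}\in\mathbb{R}_{>1}$, so the set of central characters of discrete series is parametrized uniformly in $\mb{v}$ and has a constant $W_0$-orbit count. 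This provides the candidate parametrizing set for $[\Delta]$.

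Next, I would fix a $W_0$-orbit $W_0 r$ in $\textup{Res}(\mc{R},m)$ and analyse the corresponding block $\mc{H}_{\mb{v},W_0 r}$, the quotient of $\mc{H}_\mb{v}$ by the maximal ideal of $\mc{Z}_\mb{v}$ at $W_0 r(\mb{v})$. Because $r$ is a generic residual $\mb{L}$-point, the completion $\widehat{\mc{H}}_{W_0 r}$ of $\mc{H}$ along the corresponding maximal ideal in $\mc{Z}$ is a free module of finite rank over the completed center, and its specializations at $\mb{v}\in\mathbb{R}_{>1}$ form a flat family of finite-dimensional $\mathbb{C}$-algebras of constant total dimension. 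Positivity of the specialized trace $\tau^d_\mb{v}$ together with the residual-character characterization of discrete series forces each such block to be semisimple, so $\mc{H}_{\mb{v},W_0 r}$ is a product of matrix algebras whose number of factors is locally constant in $\mb{v}$. I would then construct the weakly continuous families $\delta_\mb{v}$ by lifting primitive central idempotents across the flat family via Hensel-type arguments in $v$, and taking $\delta_\mb{v}$ to be the trace of $\mc{H}_\mb{v}$ on the resulting simple summand; weak continuity follows by construction from continuity of the idempotents in the operator norm on any bounded representation space.

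The main obstacle is to upgrade these \emph{a priori} local analytic families, defined on small open subsets of $\mathbb{R}_{>1}$, to \emph{globally} continuous families on all of $\mathbb{R}_{>1}$: one must exclude both that two families collide and split at an exceptional parameter, and that Wedderburn factors appear or disappear. The decisive ingredient is that the residual locus is preserved throughout $\mathbb{R}_{>1}$ by Theorem \ref{thm:genrescos}(v), so the central character of each family remains residual and never merges with another orbit; combined with the constant total block dimension and the semisimplicity enforced by positivity of $\tau^d_\mb{v}$, this rigidity excludes any wall-crossing in the Wedderburn decomposition. Bijectivity of the evaluation map at each fixed $\mb{v}$ then follows because the families exhaust each block, and distinct families have either distinct central characters or distinct isomorphism types within the same block.
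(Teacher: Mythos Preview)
Your argument has a genuine gap at two linked points. First, it is not true that an irreducible $\mc{H}_\mb{v}$-module is discrete series \emph{precisely when} its central character is a $W_0$-orbit of a residual point. Only one direction holds: discrete series have residual central characters (this is Theorem~\ref{thm:gencc}), and conversely every residual orbit supports some discrete series (Theorem~\ref{thm:ratfdeg}(i)); but a residual central character typically also supports non-tempered irreducibles. Second, and more seriously, the block $\mc{H}_{\mb{v},W_0r}:=\mc{H}_\mb{v}/\mf{m}_{W_0r(\mb{v})}\mc{H}_\mb{v}$ is essentially never semisimple. Positivity of $\tau^d_\mb{v}$ on the Hilbert algebra $\mc{H}_\mb{v}$ does not descend to a nondegenerate trace on this quotient, so it gives no semisimplicity. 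Already for type $\textup{A}_1$ the $4$-dimensional block at the Steinberg central character has a nontrivial radical. Consequently your Hensel lifting of primitive central idempotents only controls the number of \emph{blocks} of this finite-dimensional algebra, not the number of simple modules, let alone the number of discrete series among them. The ``rigidity'' paragraph therefore does not exclude collisions or splittings of discrete series families.

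Note that the paper does not prove Theorem~\ref{thm:def} itself; it is quoted from \cite{OpdSol}. The mechanism, sketched in the paragraph immediately following the statement, is quite different from yours: one constructs \emph{scaling isomorphisms} $\tilde\sigma_\ep$ directly on the level of formal completions of $\mc{H}$ at a fixed central character, yielding for each $\ep\in(0,1]$ a bijection $\textup{Irr}(\mc{H}_\mb{v})\to\textup{Irr}(\mc{H}_{\mb{v}^\ep})$ that preserves unitarity, temperedness and the discrete series, with the matrix coefficients depending real-analytically on $\ep$ (see (\ref{eq:scale}) and Theorem~\ref{thm:cont}). These are honest isomorphisms of (completed) algebras, so no semisimplicity hypothesis is needed, and the bijection on discrete series at each $\mb{v}$ is built in rather than deduced from counting Wedderburn factors. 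If you want to salvage your approach you would need an independent argument that the number of discrete series constituents in each residual block is constant in $\mb{v}$; the scaling isomorphisms are precisely what supply this.
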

\subsubsection{Scaling of the parameters}
The continuous families of discrete series
characters described in the previous paragraph arise from underlying families
of ``scaling isomorphisms'' which can be defined on the level of
formal completions of affine Hecke algebras at a fixed central character.
This construction yields (see \cite[Theorem 1.7]{OpdSol}) a family of bijections
for $\ep\in (0,1]$
\begin{align}\label{eq:scale}
\tilde \sigma_\ep:\textup{Irr}(\mc{H}_\mb{v})&\to
\textup{Irr}(\mc{H}_{\mb{v}^\ep})\\
\nonumber (\pi ,V) &\to (\pi_\ep ,V)
\end{align}
preserving unitarity, temperedness, and the discrete series, and
having the property that for all $h\in\mc{H}_\mb{v}$ the family
$(0,1]\ni\ep\to\pi_\ep(h)$ is real analytic in $\ep$.
These results enable us to choose a set $\Delta$ of real analytic
families of irreducible representations $\mb{v}\to(\delta_\mb{v},V_\delta)$
for $\mc{H}$ such that the character map
$\delta\to\chi_\delta$ yields a bijection $\Delta\to[\Delta]$.
In other words, we can and will choose a complete set $\Delta$ of
real analytic representatives for the set of continuous families of
irreducible discrete series characters $[\Delta]$.
\begin{defn}
We call $\Delta=\Delta(\mc{R},m)$  a complete set of generic irreducible
discrete series characters. For all subsets $P\subset F_0$ we fix a complete set of
generic irreducible discrete series representations
$\Delta_P=\Delta(\mc{R_P},m_P)$
of the semisimple quotient algebra $\mc{H}_P:=\mc{H}(\mc{R}_P,m_P)$ of
the ``Levi-subalgebra'' $\mc{H}^P:=\mc{H}(\mc{R}^P,m^P)\subset \mc{H}(\mc{R},m)$
(see \cite[Section 3.5, Section 4.1]{DeOp1} for the precise definitions
of these subquotient algebras associated with $P$).
\end{defn}
\subsubsection{Scaling transformations on the tempered spectrum}
Let $\mc{S}_\mb{v}$ denote the Schwartz algebra
of $\mc{H}_\mb{v}$. As subspace of the $L^2$-completion
$L^2(\mc{H}_\mb{v})$ we have the following description of
$\mc{S}_\mb{v}$:
\begin{equation}\label{eq:schw}
\mc{S}_\mb{v}:=
\{s=\sum_{w\in W}c_wN_w\mid
\forall k\in\mathbb{N}\ \exists M_k\in\mathbb{R}_+\ \forall w\in W:(1+\mc{N}(w))^k|c_w|\leq M_k\}
\end{equation}
(where $\mc{N}(w)$ is a norm function on $W$, see \cite{DeOp1}).
This is a Fr\'echet completion of $\mc{H}_\mb{v}$ whose
irreducible spectrum, the space of tempered irreducible representation, is known to be
equal (see \cite[Corollary 4.4]{DeOp1}) to the support
$\mathfrak{S}_\mb{v}$ of the spectral measure
$\mu_{Pl,\mb{v}}$ of $\tau^d_\mb{v}$.
(We remark in passing that the (Jacobson) topologies of
$\mathfrak{S}_\mb{v}$ when viewed as the irreducible spectrum of $\mc{C}_\mb{v}$
or when viewed as the irreducible spectrum of the dense, spectrally closed subalgebra
$\mc{S}_\mb{v}\subset \mc{C}_\mb{v}$ are in fact equal
(see \cite[Theorem 3.3]{Lau}; the technical condition required in this theorem is
obviously satisfied in the present case). But the Schwartz algebras have the important
advantage above $\mc{C}_\mb{v}$ of admitting the explicit
description (\ref{eq:schw}). In particular we see that they are \emph{independent}
of $\mb{v}$ when viewed as Fr\'echet spaces (forgetting the algebra structure).
We sometimes write $\mc{S}(\mc{R})$ to denote this underlying Fr\'echet space.

The scaling maps $\tilde{\sigma_\ep}$ (with $\ep>0$)
of (\ref{eq:scale}) define homeomorphisms on the tempered spectrum:
\begin{thm}[\cite{Sol}, Lemma 5.19 and Theorem 5.21]\label{thm:cont}
There exists a family of isomorphisms of pre-$C^*$-algebras
\begin{equation}\label{eq:scaletemp}
\phi_{\ep,\mb{v}}:\mc{S}_{\mb{v}^\ep}\to\mc{S}_\mb{v}
\end{equation}
(with $\mb{v}>1$ and $\ep>0$) such that $\phi_{1,\mb{v}}=\textup{Id}$ and
such that the following holds:
\begin{enumerate}
\item[(i)] For all $s\in\mc{S}(\mc{R})$ fixed
the map $\mathbb{R}_+\times \mathbb{R}_{>1}\ni(\ep,\mb{v})\to
\phi_{\ep,\mb{v}}(s)\in\mc{S}_\mb{v}$ is continuous
if we equip $\mc{S}_\mb{v}$ with its canonical
pre-$C^*$-algebra topology (recall that
$\mc{S}(\mc{R})$ denotes the underlying Fr\'echet space
of $\mc{S}_{\mb{v}^\ep}$ and  as such is independent of
$\mb{v}^\ep$).
\item[(ii)] For all $\pi\in\mathfrak{S}_\mb{v}$ we have
$\tilde{\sigma}_{\ep,\mb{v}}(\pi)\simeq\pi\circ\phi_{\ep,\mb{v}}$
(with $\tilde{\sigma}_{\ep,\mb{v}}$ as in (\ref{eq:scale}), but extended to all
$\ep>0$ by defining $\tilde{\sigma}_{\ep,\mb{v}}:=
\tilde{\sigma}_{\ep^{-1},\mb{v}^\ep}^{-1}$
if $\ep\geq 1$).
\item[(iii)] The map $\tilde{\sigma_\ep}$ (with $\ep>0$)
induces a homeomorphism from $\mathfrak{S}_\mb{v}$ onto
$\mathfrak{S}_{\mb{v}^\ep}$.
\end{enumerate}
\end{thm}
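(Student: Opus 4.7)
The plan is to lift the bijection $\tilde\sigma_\ep$ on irreducible tempered representations, already established in Theorem \ref{thm:def} and in (\ref{eq:scale}), to an isomorphism on the Schwartz completions. The strategy has two stages: first realize the scaling compatibility at the level of formal completions of $\mc{H}_\mb{v}$ at each (tempered) central character, and then globalize using the explicit Fr\'echet description (\ref{eq:schw}) and the Harish-Chandra style Plancherel decomposition of $\mc{S}_\mb{v}$ into matrix-coefficient algebras of parabolically induced tempered representations.

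First I would revisit the construction of $\tilde\sigma_\ep$ from \cite{OpdSol}. There one obtains, for each $W_0$-orbit of central characters, an isomorphism of the formal completions of $\mc{H}_\mb{v}$ and $\mc{H}_{\mb{v}^\ep}$ along that orbit which depends real-analytically on $\ep$, preserves unitarity, temperedness and the discrete series. These formal isomorphisms are canonical and behave well under parabolic induction from Levi subalgebras $\mc{H}_P$, which is needed because by \cite{DeOp1} the spectrum $\mathfrak{S}_\mb{v}$ is built out of induced representations from the families $\Delta_P$. I would then assemble the local scaling isomorphisms into a single map $\phi_{\ep,\mb{v}}:\mc{S}_{\mb{v}^\ep}\to\mc{S}_\mb{v}$ via the Fourier transform that identifies the Schwartz algebra with an explicit algebra of rapidly decreasing sections over $\mathfrak{S}_\mb{v}$.

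Next I would verify the three properties. Property (ii) is essentially a tautology once $\phi_{\ep,\mb{v}}$ is defined through the Fourier isomorphism, since the scaling of representations was built in by construction. For (i), the real-analytic dependence on $\ep$ from \cite{OpdSol} together with the polynomial dependence of the standard basis in $\mb{v}$ yields joint continuity in $(\ep,\mb{v})$, provided one controls the Fr\'echet seminorms in (\ref{eq:schw}) uniformly in the parameters; this is precisely the content of \cite[Lemma 5.19]{Sol}. Property (iii) then follows from (ii), the bijectivity of $\tilde\sigma_\ep$ on irreducibles established in (\ref{eq:scale}), and the fact, recalled just before the statement, that the Jacobson topologies on $\mathfrak{S}_\mb{v}$ coming from $\mc{C}_\mb{v}$ and from $\mc{S}_\mb{v}$ coincide.

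The hard part is the uniform seminorm estimate: one must show that the formal scaling isomorphisms extend continuously to the Fr\'echet completions and that the resulting operators are uniformly bounded on compact sets of $(\ep,\mb{v})\in\mathbb{R}_+\times\mathbb{R}_{>1}$. This requires controlling the asymptotic behavior of Harish-Chandra $c$-functions and matrix coefficients under scaling, which in turn relies on Theorem \ref{thm:genrescos}(v): the set of residual cosets is independent of the specialization $\mb{v}\in\mathbb{R}_+\backslash\{1\}$, so the location of the spectral support and the pole structure of $\mu=\mu_{\mc{R},m,d}$ transport coherently along the one-parameter family of algebras. Once this uniform control is in place, the Fr\'echet continuity and the algebra homomorphism property follow by density from the formal picture.
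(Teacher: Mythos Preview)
Your outline is in the right spirit and, like the paper, relies on Solleveld's results in \cite{Sol} for the substantive work; but there are several points where you diverge from or miss what the paper actually does.

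First, the paper notes explicitly that \cite[Theorem 5.21]{Sol} constructs the isomorphisms $\phi_{\ep,\mb{v}}$ only for $\ep\in(0,1]$; the extension to $\ep\geq 1$ is then obtained by setting $\phi_{\ep,\mb{v}}:=\phi_{\ep^{-1},\mb{v}^\ep}^{-1}$, and one checks that continuity (i) persists by the same argument as in \cite[Lemma 5.19]{Sol}. You do not mention this extension step.

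Second, your claim that (ii) is ``essentially a tautology'' once $\phi_{\ep,\mb{v}}$ is built via the Fourier transform is overstated. The paper's argument is more concrete: by \cite[Theorem 5.19(2)]{Sol} one has $\pi(P,\delta_\mb{v},t)\circ\phi_{\ep,\mb{v}}\simeq \pi_\ep(P,\delta_\mb{v},t)$ for every \emph{standard induced} tempered representation, and since every irreducible tempered representation is a direct summand of such an induced module (\cite[Theorem 3.22]{DeOp1}), property (ii) follows by passing to summands. Your Fourier sketch gestures at this, but the actual mechanism is the matching on induced representations followed by decomposition.

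Third, your route to (iii) via the comparison of Jacobson topologies is unnecessarily indirect. The paper simply observes that an isomorphism of pre-$C^*$-algebras extends to an isomorphism of the $C^*$-completions and hence induces a homeomorphism of spectra; together with (ii) this identifies that homeomorphism with $\tilde\sigma_\ep$.

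Finally, your appeal to Theorem~\ref{thm:genrescos}(v) on the $\mb{v}$-independence of residual cosets is a red herring for this proof: the paper does not use it here, since the uniform Fr\'echet estimates are already contained in \cite[Lemma 5.19]{Sol}.
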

\begin{proof}
In \cite[Theorem 5.21]{Sol} the isomorphisms of pre-$C^*$-algebras
$\phi_{\ep,\mb{v}}$ were constructed for $\ep\in(0,1]$ in such a way that
$\phi_{1,\mb{v}}$ is the identity, and
such that the continuity (i) holds (this is \cite[Lemma 5.19]{Sol}).
For $\ep\geq 1$ we define $\phi_{\ep,\mb{v}}:=\phi_{\ep^{-1},\mb{v}^\ep}^{-1}$.
It is easy to see that the required continuity holds in this case as well,
by the same arguments as used in \cite[Lemma 5.19]{Sol}.
According to \cite[Theorem 5.19(2)]{Sol}, for all triples $(P,\delta,t)$ with
$P\subset F_0$, $\delta\in\Delta_{P}$ and $t\in T^P_u$ one has
$\pi(P,\delta_\mb{v},t)\circ\phi_{\ep,\mb{v}}\simeq \pi_\ep(P,\delta_\mb{v},t)$.
Since these induced
characters are finite sums of irreducible tempered characters this implies
that $\pi_i\circ \phi_{\ep,\mb{v}}\simeq \tilde{\sigma_\ep}(\pi_i)$ for all irreducible summands
$\pi_i$ of $\pi(P,\delta_\mb{v},t)$. But all irreducible tempered representations are direct
summands of unitarily induced standard representations of the form $\pi(P,\delta_\mb{v},t)$
\cite[Theorem 3.22]{DeOp1}, hence we deduce the second assertion (ii).
Since the maps $\phi_\ep$ constructed by Solleveld are
isomorphisms of pre-$C^*$-algebras this implies that the maps $\tilde{\sigma_\ep}$
define homeomorphisms between the spectra of the $C^*$-algebras
$\mc{C}_\mb{v}$ and $\mc{C}_{\mb{v}^\ep}$, proving (iii).
\end{proof}
\subsubsection{The generic $C^*$-algebra}
It is natural at this point to construct the ``generic $C^*$-algebra $\mc{C}=\mc{C}(\mc{R},m)$'',
a continuous bundle of $C^*$-algebras over $\mathbb{R}_{>1}$ which is the analytic counterpart
of the generic Hecke algebra $\mc{H}=\mc{H}(\mc{R},m)$ (this construction was briefly discussed
in \cite[Section 5.2]{Sol}).

Consider the bundle of Banach spaces $\coprod_{\mb{v}>1}\mc{C}_\mb{v}$.
Recall that the underlying Fr\'echet space $\mc{S}(\mc{R})$ of the Schwartz algebras
$\mc{S}_\mb{v}$ is independent of $\mb{v}$ and is contained in
$\mc{C}_\mb{v}$ for all $\mb{v}>1$. Using this fact we construct a
linear subspace (isomorphic to $C_c(\mathbb{R}_{>1})\otimes\mc{S}(\mc{R})$)
of sections $\sigma_{f,s}$ of the above bundle by the linear map defined by
$f\otimes s\to\sigma_{f,s}$, where $\sigma_{f,s}$ is the section defined by
$\sigma_{f,s}(\mb{v}):=f(\mb{v})s\in\mc{C}_\mb{v}$.
By \cite[Proposition 5.6]{Sol} the map $\mb{v}\to\Vert \sigma_{f,s}(\mb{v})\Vert_o$ is
continuous. Hence by \cite[Section 10.3]{Dix} this collection of sections generates
a continuous bundle of $C^*$-algebras $\mc{C}$.
By construction $\mc{C}$
contains all compactly supported continuous maps
$\sigma:\mathbb{R}_{>1}\to \mc{S}(\mc{R})$.
\begin{thm}\label{thm:bundle}
The continuous bundle of $C^*$-algebras $\mc{C}$ is trivial.
In particular, for any $\mb{v}_0>1$ we have a canonical homeomorphism
\begin{align}
\mathbb{R}_{>1}\times\mathfrak{S}_{\mb{v}_0}
&\to\mathfrak{S}\\
\nonumber
(\mb{v},\pi)&\to \tilde{\sigma}_{\log_{\mb{v}_0}(\mb{v}),\mb{v}_0}(\pi)
\end{align}
\end{thm}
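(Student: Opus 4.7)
The plan is to construct an isomorphism $\Psi: \mc{C} \xrightarrow{\sim} \mathbb{R}_{>1} \times \mc{C}_{\mb{v}_0}$ of continuous bundles of $C^*$-algebras over $\mathbb{R}_{>1}$, from which the claimed homeomorphism of spectra will follow by functoriality of the spectrum construction together with Theorem \ref{thm:cont}(ii). Fix the base point $\mb{v}_0>1$ and, for each $\mb{v}>1$, set $\ep(\mb{v}) := \log_{\mb{v}_0}(\mb{v})$, so that $\mb{v}_0^{\ep(\mb{v})} = \mb{v}$. By Theorem \ref{thm:cont}, the map $\phi_{\ep(\mb{v}),\mb{v}_0}: \mc{S}_\mb{v} \to \mc{S}_{\mb{v}_0}$ is an isomorphism of pre-$C^*$-algebras and hence extends uniquely to a $C^*$-algebra isomorphism $\tilde{\phi}_\mb{v}: \mc{C}_\mb{v} \to \mc{C}_{\mb{v}_0}$. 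I take $\Psi$ to be the fiberwise assembly of these isomorphisms, $\Psi(a) := (\mb{v},\tilde{\phi}_\mb{v}(a))$ for $a\in\mc{C}_\mb{v}$.

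To upgrade $\Psi$ from a fiberwise to a bundle isomorphism I use that $\mc{C}$ was built, in the sense of \cite[Section 10.3]{Dix}, as the bundle generated by the continuous sections $\sigma_{f,s}$ with $f\in C_c(\mathbb{R}_{>1})$ and $s\in\mc{S}(\mc{R})$. It therefore suffices to verify that $\Psi$ and $\Psi^{-1}$ carry each such generator to a continuous section of the trivial bundle; for $\sigma_{f,s}$, one has
\begin{equation*}
(\Psi\circ\sigma_{f,s})(\mb{v}) = f(\mb{v})\,\phi_{\ep(\mb{v}),\mb{v}_0}(s) \in \mc{C}_{\mb{v}_0},
\end{equation*}
and continuity in $\mb{v}$ with respect to the $C^*$-norm on $\mc{C}_{\mb{v}_0}$ is immediate from Theorem \ref{thm:cont}(i), combined with the continuity of $\ep(\cdot)$ and $f$. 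The symmetric check for $\Psi^{-1}$ uses the family $\phi_{\ep(\mb{v}),\mb{v}_0}^{-1}$, whose continuity in $\mb{v}$ is supplied by the same part of Theorem \ref{thm:cont} under the extension convention $\phi_{\ep,\mb{v}} := \phi_{\ep^{-1},\mb{v}^\ep}^{-1}$ for $\ep\geq 1$.

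Once $\Psi$ is established as an isomorphism of continuous $C^*$-bundles, the induced map on spectra is a homeomorphism $\mathbb{R}_{>1}\times\mathfrak{S}_{\mb{v}_0} \xrightarrow{\sim} \mathfrak{S}$ sending $(\mb{v},\pi)$ to the representation $\pi\circ\tilde{\phi}_\mb{v}\in\mathfrak{S}_\mb{v}$ of $\mc{C}_\mb{v}$. By Theorem \ref{thm:cont}(ii) this representation is unitarily equivalent to $\tilde{\sigma}_{\ep(\mb{v}),\mb{v}_0}(\pi) = \tilde{\sigma}_{\log_{\mb{v}_0}(\mb{v}),\mb{v}_0}(\pi)$, which matches the formula in the statement.

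The main technical point is the promotion of continuity from the Fréchet topology of $\mc{S}(\mc{R})$ to the $C^*$-norm topology on $\mc{C}_{\mb{v}_0}$; this is precisely the strength of Theorem \ref{thm:cont}(i), which equips $\mc{S}_\mb{v}$ with its canonical pre-$C^*$-algebra topology. Without this strengthening one could only trivialize $\mc{C}$ as a bundle of Fréchet spaces, not of $C^*$-algebras, and the resulting map on tempered spectra would not be continuous.
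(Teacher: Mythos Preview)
Your proof is correct and follows essentially the same route as the paper: both construct the trivialization by sending the generating sections $\sigma_{f,s}$ to $\sigma'_{f,s}(\mb{v})=f(\mb{v})\phi_{\log_{\mb{v}_0}(\mb{v}),\mb{v}_0}(s)$ and invoke Theorem~\ref{thm:cont}(i) for continuity. You are slightly more explicit in separating the fiberwise definition of $\Psi$ from the section-level check and in citing Theorem~\ref{thm:cont}(ii) for the identification of the induced map on spectra with $\tilde{\sigma}_{\log_{\mb{v}_0}(\mb{v}),\mb{v}_0}$, but the argument is the same.
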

\begin{proof}
If we fix $\mb{v}_0>1$ then Theorem \ref{thm:cont}(i) implies that
for all $s\in \mc{S}(\mc{R})$ and $f\in C_c(\mathbb{R}_{>1})$
the function $\sigma^\prime_{f,s}$ defined by
$\mb{v}\to \sigma^\prime_{f,s}(\mb{v}):=f(\mb{v})\phi_{\log_{\mb{v_0}}(\mb{v}),\mb{v_0}}(s)$
defines a section
of $C_0(\mathbb{R}_{>1})\otimes\mc{C}_{\mb{v}_0}$. It is clear that
these sections are dense in $C_0(\mathbb{R}_{>1})\otimes\mc{C}_{\mb{v}_0}$,
and using Theorem \ref{thm:cont} we easily see that the linear map defined by
$\sigma_{f,s}\to\sigma^\prime_{f,s}$ induces a trivialization of
$\mc{C}$. The assertion on the spectrum of $\mc{C}$
follows immediately from this fact.
\end{proof}
\subsection{The generic Plancherel measure}
Given a normalized affine Hecke algebra $(\mc{H},\tau^d)$ we define
a family of positive traces $\{\tau^d_\mb{v}\}_{\mb{v}>1}$ on
$\mc{C}$ by $\tau^d_\mb{v}(h)=d(\mb{v})h_e(\mb{v})$
if $h=\sum_{w\in W}h_wN_w\in\mc{C}$.
\begin{defn} Let $\mb{v}>1$.
The spectral measure of the positive trace $\tau^d_\mb{v}$
(supported on the fibre $\mathfrak{S}_\mb{v}$
of $\mathfrak{S}$) is called the Plancherel measure
of $(\mc{H}_\mb{v},\tau^d_\mb{v})$ and is denoted by
$\nu_{Pl,\mb{v}}$. We denote the family of spectral measures
$\{\nu_{Pl,\mb{v}}\}_{\mb{v}>1}$ by $\nu_{Pl}$
and call this the generic Plancherel measure.
\end{defn}
\subsubsection{The central character of generic discrete series}
The Plancherel measure is given by density functions which are
rational functions with rational coefficients (this will be discussed in more
detail below). A crucial step is the following result, which is an easy 
consequence of \cite[Theorem 5.3]{OpdSol2} (but it should be remarked that, 
since we are only considering ``scaling of the parameters" here, a more elementary 
direct proof by reducing to the graded affine 
Hecke algebra is also possible in the present 
context):
\begin{thm}\label{thm:gencc}
The central character $gcc(\delta)$
of a generic irreducible discrete series representation
$\delta\in\Delta:=\Delta(\mc{R},m)$ is a $W_0$-orbit of
generic residual points $gcc(\delta)\in W_0\backslash\textup{Res}(\mc{R},m)$.
\end{thm}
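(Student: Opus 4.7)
The plan is to combine specialization-by-specialization knowledge of central characters of discrete series with the rigidity of the generic set of residual points provided by Theorem \ref{thm:genrescos}.

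First, I would invoke the result (established in \cite{Opd1}, with the residual-point perspective sharpened in \cite{Opd3}) that for each fixed $\mb{v}\in\mathbb{R}_{>1}$, the central character of every irreducible discrete series representation of $\mc{H}_\mb{v}(\mc{R},m)$ is a $W_0$-orbit of $(\mc{R},m)$-residual points of $T_\mb{v}$, i.e. an element of $W_0\backslash\mc{L}_\mb{v}^{\{0\}}$, where $\mc{L}_\mb{v}^{\{0\}}\subset\mc{L}_\mb{v}$ denotes the set of zero-dimensional residual cosets. Applied pointwise to $\delta_\mb{v}$ for each $\mb{v}>1$, this produces a well-defined map $\mb{v}\mapsto cc(\delta_\mb{v})\in W_0\backslash\mc{L}_\mb{v}^{\{0\}}$.

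Next I would exploit the continuity of the generic family. By Theorem \ref{thm:def}, combined with the real analyticity of the scaling maps $\tilde{\sigma}_\ep$ (\ref{eq:scale}), the chosen representatives $\delta\in\Delta(\mc{R},m)$ can be realized as real analytic families $\mb{v}\mapsto(\delta_\mb{v},V_\delta)$. Hence for every central element $z\in\mc{Z}=\mc{A}^{W_0}$, the scalar $\chi_{\delta_\mb{v}}(z)$ by which $z$ acts on $V_\delta$ depends real analytically on $\mb{v}$. Consequently, the map $\mb{v}\mapsto cc(\delta_\mb{v})$, viewed as a family of $\mathbb{C}$-points of the geometric quotient $W_0\backslash T_\mb{v}$, is continuous (indeed real analytic) in $\mb{v}$.

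Now I would appeal to Theorem \ref{thm:genrescos}(iii) and (v). The set $W_0\backslash\textup{Res}(\mc{R},m)$ of $W_0$-orbits of generic residual points is finite, and fibrewise specialization yields a bijection $W_0\backslash\textup{Res}(\mc{R},m)\to W_0\backslash\mc{L}_\mb{v}^{\{0\}}$ for each $\mb{v}\in\mathbb{R}_+\setminus\{1\}$. Therefore the continuous family $\mb{v}\mapsto cc(\delta_\mb{v})$ lifts uniquely to a continuous map $\mathbb{R}_{>1}\to W_0\backslash\textup{Res}(\mc{R},m)$, which, being valued in a finite discrete set, must be constant. Its common value defines a single $W_0$-orbit $gcc(\delta)\in W_0\backslash\textup{Res}(\mc{R},m)$ whose specialization at each $\mb{v}$ equals $cc(\delta_\mb{v})$, as required.

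The main obstacle is the pointwise input that every discrete series central character is residual; this is the deep result of \cite{Opd1,Opd3}. Once this is granted, the passage from the pointwise statement to a \emph{generic} residual $\mb{L}$-valued point hinges on the bijection of Theorem \ref{thm:genrescos}(v), which is exactly what prevents the fibrewise orbits from jumping between distinct generic orbits as $\mb{v}$ varies.
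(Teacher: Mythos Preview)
The paper does not actually supply a proof of Theorem~\ref{thm:gencc}; it is stated as a known input (implicitly drawn from \cite{Opd1}, \cite{Opd3}, \cite{OpdSol2}) and then used in what follows. So there is no ``paper's own proof'' to compare against.

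Your argument is correct and is precisely the natural way to deduce the statement from the results the paper cites. The three ingredients you isolate are exactly the right ones: (a) the pointwise fact from \cite{Opd1,Opd3} that discrete series central characters at a fixed $\mb{v}$ are residual; (b) the real analyticity of the family $\mb{v}\mapsto\delta_\mb{v}$, which makes $\mb{v}\mapsto cc(\delta_\mb{v})$ continuous in $W_0\backslash T_\mb{v}$; and (c) the bijection of Theorem~\ref{thm:genrescos}(v), which guarantees that the finitely many algebraic sections $\mb{v}\mapsto W_0 r(\mb{v})$ (for $W_0 r\in W_0\backslash\textup{Res}(\mc{R},m)$) have pairwise disjoint graphs over $\mathbb{R}_{>1}$, so a continuous selection among them is constant. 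One small point you might make explicit for completeness: the sections $\mb{v}\mapsto r(\mb{v})$ are themselves algebraic (hence continuous), and the bijection in (v) is $W_0$-equivariant, so it descends to a bijection on $W_0$-orbits; this is what ensures the ``sheets'' you are lifting to are genuinely separated.
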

\subsubsection{The generic formal degree}
Given $\delta\in\Delta$ it is known \cite[Theorem 5.12]{OpdSol2}
that the Plancherel measure
$\mathbb{R}_{>1}\ni\mb{v}\to\nu_{Pl,\mb{v}}(\{\delta_\mb{v}\})$
is a function on $\mathbb{R}_{>1}$ whose value at $\mb{v}\in\mathbb{R}_{>1}$
is the specialization at $\mb{v}$ of a rational function $\de(\delta)$
with rational coefficients. The rational function $\de(\delta)$ is called the
generic formal degree of $\delta$.
\begin{thm}\label{thm:ratfdeg}
Let $(\mc{H},\tau^d)$ be a normalized
affine Hecke algebra of rank $l$ and with $d\in\mb{M}_{(k)}$.
Let $\delta\in\Delta$ be a generic
discrete series character of $\mc{H}$, and let
$gcc(\delta)=W_0r$ with $r\in\textup{Res}(\mc{R},m)$.
\begin{enumerate}
\item[(i)]
For all $r\in\textup{Res}(\mc{R},m)$ the set
$\Delta_{W_0r}(\mc{R},m):=\{\delta\in\Delta(\mc{R},m)\mid gcc(\delta)=W_0r\}$
is a nonempty, finite set.
\item[(ii)]
There exists a nonzero rational constant $D_\delta\in\mathbb{Q}^\times$
such that for all $\mb{v}>1$
\begin{equation}
\de(\delta)(\mb{v})=D_\delta\mu^{(\{r\})}(\mb{v},r(\mb{v}))
\end{equation}
\item[(iii)]
One has $\de(\delta)\in\mb{M}_{(k+l)}$.
\end{enumerate}
\end{thm}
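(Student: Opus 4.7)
The plan is to deduce all three assertions from the residue-calculus description of the Plancherel measure, combined with the generic framework established in Theorems \ref{thm:def}, \ref{thm:cont} and \ref{thm:bundle}.

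For (i), the finiteness of each fibre of $gcc$ is immediate from the finiteness of $\Delta(\mc{R},m)$ itself (Theorem \ref{thm:def}). Nonemptiness is the substantive direction: by Theorem \ref{thm:gencc} the map $gcc:\Delta(\mc{R},m)\to W_0\backslash\textup{Res}(\mc{R},m)$ is well-defined, and I must show it is surjective. My approach would be to reduce via the scaling isomorphisms (Theorem \ref{thm:cont}) to any fixed $\mb{v}>1$, and then to invoke the classification of discrete series by central character for $\mc{H}_\mb{v}$ from \cite{OpdSol2}: the residue calculus applied to $\tau^d_\mb{v}$ shows that every $W_0$-orbit of residual points in $T_\mb{v}$ contributes nontrivially to the discrete part of the spectral measure, and hence must be realized as the central character of at least one irreducible discrete series constituent.

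For (ii), the starting point is the decomposition of $\tau^d_\mb{v}$ as a sum of residues of $\mu$-type integrals over the residual cosets in $\mc{L}_\mb{v}$. The contribution of the residual point $W_0r$ to the Plancherel measure is supported on $\Delta_{W_0 r,\mb{v}}$, and its total mass at $\mb{v}$ equals $\mu^{(\{r\})}_{\mc{R},m,d}(\mb{v},r(\mb{v}))$ times a factor controlled by $\#\Delta_{W_0 r}$, the stabilizer structure of the $W_0$-action at $r$, and the dimensions of the representations. Distributing this mass among the individual $\delta$ yields $\de(\delta)(\mb{v})=D_\delta\,\mu^{(\{r\})}(\mb{v},r(\mb{v}))$. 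Rationality of $D_\delta$ then follows because both sides are rational functions of $\mb{v}$ with rational coefficients -- the right-hand side by the explicit formula (\ref{eq:mureg}) combined with Theorem \ref{thm:genrescos}(ii) (the coordinates of $r$ are of the form $\zeta v^n$ with $\zeta$ a root of unity and $n\in\mathbb{Z}$), and the left-hand side by \cite[Theorem 5.12]{OpdSol2}.

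For (iii), I apply Proposition \ref{prop:reg} to the zero-dimensional coset $L=\{r\}$: since $\textup{codim}(\{r\})=l$, the function $\mu^{(\{r\})}_{\mc{R},m,1}$ has vanishing order exactly $l$ at $\mb{v}=1$. Multiplying by $d\in\mb{M}_{(k)}$ contributes an additional $k$, hence $\mu^{(\{r\})}_{\mc{R},m,d}(\mb{v},r(\mb{v}))\in\mb{M}_{(k+l)}$, and combining with (ii) and $D_\delta\in\mathbb{Q}^\times\subset\mb{M}_{(0)}$ forces $\de(\delta)\in\mb{M}_{(k+l)}$. The main obstacle lies in (ii): making precise how the Plancherel mass concentrated at the central character $W_0 r$ distributes among the discrete series sharing that central character, and controlling the rationality of the proportionality constant. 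This is where the full residue calculus of \cite{Opd1, OpdSol2, Opd3} is required, together with the observation that the combinatorial correction factors (orbit cardinalities, numbers of discrete series with given central character, representation dimensions) are integer valued.
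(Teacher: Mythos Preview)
Your treatment of (i) and (ii) is essentially in line with the paper's, which likewise defers these points to \cite{OpdSol2} and \cite{Opd1}: part (i) is \cite[Theorem~5.7]{OpdSol2}, and part (ii) is \cite[Proposition~3.27(v)]{Opd1} together with \cite[Theorem~5.12]{OpdSol2}. Your residue-calculus narrative for (ii) is a reasonable informal gloss on those results.

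There is, however, a genuine gap in your argument for (iii). You establish only the vanishing order of $\de(\delta)$ at $\mb{v}=1$ via Proposition~\ref{prop:reg}, and then assert $\mu^{(\{r\})}_{\mc{R},m,d}(\mb{v},r(\mb{v}))\in\mb{M}_{(k+l)}$. But membership in $\mb{M}_{(k+l)}$ is much stronger than having the correct vanishing order: by definition $\mb{M}\subset\mb{M}'$, and $\mb{M}'$ is the specific subgroup of $\mb{K}^\times$ generated by $(v-v^{-1})$, by $\mathbb{Q}^\times$, and by the $q$-integers $[n]_q$. From the explicit formula (\ref{eq:mureg}) the factors of $\mu^{(\{r\})}$ at a residual point involve values $\alpha(r)=\zeta v^n$ with $\zeta$ a root of unity, and it is not at all obvious that the resulting product lies in $\mb{M}'$. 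What has to be shown is the pair of symmetries $\de(\delta)(-\mb{v})=\de(\delta)(\mb{v})$ and $\de(\delta)(\mb{v}^{-1})=\pm\de(\delta)(\mb{v})$.

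The paper devotes the bulk of its proof of (iii) to exactly this. The invariance under $\mb{v}\mapsto -\mb{v}$ is obtained from the Euler--Poincar\'e formula for generic formal degrees \cite[Theorem~4.3]{OpdSol2}, reducing to the known behaviour of generic degrees for finite-type Hecke algebras \cite[Section~13.5]{C}. The invariance under $\mb{v}\mapsto\mb{v}^{-1}$ is obtained by constructing an explicit Hilbert-algebra isomorphism $\phi:\mc{H}_\mb{v}\to\mc{H}_{\mb{v}^{-1}}$ (sending $v\mapsto v^{-1}$, $N_s\mapsto -N_s$), checking that it induces the identity on the center, and then comparing the spectral measures of $\mc{Z}_\mb{v}$ and $\mc{Z}_{\mb{v}^{-1}}$ at the residual point $W_0r$. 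Only after these symmetries are established does the vanishing-order computation from Proposition~\ref{prop:reg} finish the argument. Your proposal omits this entire step.
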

\begin{proof}
The assertion (i) is \cite[Theorem 5.7]{OpdSol2}.

Claim (ii) follows from
\cite[Proposition 3.27(v)]{Opd1} and \cite[Theorem 5.12]{OpdSol2}.

For (iii) we will first show that $\de(\delta)\in\mb{M}$, i.e. that
$\de(\delta)(\mb{v})=\de(\delta)(-\mb{v})=\pm\de(\delta)(\mb{v}^{-1})$.
The first equality follows from the
Euler-Poincar\'e formula \cite[Theorem 4.3]{OpdSol2}
for the generic formal degree since the corresponding
identity for
the generic formal degrees of irreducible representations of finite
type Hecke algebras is true by the explicit results in \cite[Section 13.5]{C}.
(Here we rely on the technique due to \cite{Geck} and discussed
in the proof of \cite[Lemma 4.1]{OpdSol2} to reduce the computation
of formal generic degrees of the cross product of a finite type
Hecke algebra by a finite group of diagram automorphisms to the
case without the cross product).

Let us now look at the second identity.
It is easy to see that there exists a unique ring automorphism
($\mathbb{C}$-linear, but obviously not $\Lb$-linear)
$\phi:\mc{H}\to\mc{H}$
defined on the generators by sending $v\to v^{-1}$, $\omega\to\omega$
(for all $\omega\in\Omega_X$),
and $N_s\to -N_s$ (for all $s\in S$). Given
$\mb{v}\in\mathbb{R}_+\subset\mathbb{C}^\times$ we obtain
an isomorphism
\begin{equation}
\phi:\mc{H}_\mb{v}\to\mc{H}_{\mb{v}^{-1}}
\end{equation}
This algebra isomorphism is an isomorphism of
Hilbert algebras as one easily checks.
Recall that the translations $t_x\in W$ have \emph{even} lengths
(well known, see e.g. \cite[(1.3)]{Opd0}). Hence if we identify
$\mc{Z}_\mb{v}$ with $\mathbb{C}[X]^{W_0}$
by identifying the Bernstein basis element
$\theta_x\in\mc{H}_\mb{v}$ with $x\in X$, then
the restriction
$\phi|_{\mc{Z}}:\mc{Z}_\mb{v}\to\mc{Z}_{\mb{v}^{-1}}$
of $\phi$ to the center yields the identity morphism.
Recall from \cite[Theorem 3.29, Lemma 3.31]{Opd1} that the
spectral measure $\nu_\mb{v}$ of the commutative
Hilbert algebra $\mc{Z}_\mb{v}$ is supported by $W_0$-orbits
of tempered residual cosets. By the above it follows that
(cf. \cite[Corollary 6.8]{Opd1}) the spectral measure $\nu$
satisfies the symmetry relation
\begin{equation}\label{eq:sym}
\nu_\mb{v}(\{W_0r\})=\nu_{\mb{v}^{-1}}(\{W_0r\})
\end{equation}
for all $W_0$-orbits $W_0r$ of residual points.
On the other hand, by \cite[Corollary 3.25]{Opd1} we have,
for all $\mb{v}\in\mathbb{R}_+\backslash\{1\}$, that
\begin{equation}\label{eq:zdeghdeg}
\nu_\mb{v}(\{gcc(\delta)(\mb{v})\})=
\kappa_\ep\mu^{(\{r\})}(\mb{v},r(\mb{v}))
\end{equation}
where the central character $gcc(\delta)(\mb{v})$ is
now viewed as a discrete series
character of $\mc{Z}_\mb{v}$, and where
$\kappa_\ep\in\mathbb{Q}^\times$ is a rational number depending
only on $\ep:=\textup{sign}(\log(\mb{v}))$.
In addition \cite[Lemma 3.31]{Opd1} and \cite[Theorem A.14(ii)]{Opd1}
(also see \cite{Opd3} for classification free proofs)
imply that
\begin{equation}\label{eq:symres}
gcc(\delta)(\mb{v})=gcc(\delta)(\mb{v}^{-1})
\end{equation}
We conclude by (\ref{eq:sym}), (\ref{eq:zdeghdeg}), (\ref{eq:symres})
that
$\mu^{(\{r\})}(\mb{v},r(\mb{v}))=\kappa\mu^{(\{r\})}(\mb{v}^{-1},r(\mb{v}^{-1}))$ for
some nonzero rational number $\kappa$. Combining with
(\ref{eq:mureg}) we see that $\kappa\in\{\pm 1\}$, and by (i) this
implies that
$\de(\delta)(\mb{v})=\pm\de(\delta)(\mb{v}^{-1})$ as required. 

Finally we need to check that the vanishing order of $\de(\delta)$ at
$\mb{v}=1$ equals $k+l$. Using (i) this follows directly from
Proposition \ref{prop:reg}.
\end{proof}
\subsubsection{The tempered central character map}\label{subsub:tempcc}
A residual coset $L$ (over $\mathbb{L}$) determines a parabolic subsystem 
$R_L:=\{\alpha\in R_0\mid \alpha|_L \text{ is constant}\}$.  
Let $T_L\subset T$ be the subtorus whose cocharacter lattice $Y_L$ 
equals $Y_L=Y\cap\mathbb{Q}R_L^\vee$. Let $T^L\subset T$ be the  
subtorus of which $L$ is a coset (i.e. $T^L$ is the subtorus whose 
cocharacter lattice is $Y^L:=Y\cap (R_L)^\perp$.
It is clear from the results stated in paragraph \ref{subsub:rescos} that we have 
$L=rT^L$ for some residual point $r\in T_L$ of the affine Hecke algebra 
$\mc{H}(\mc{R}_L,m_L)$, where $\mc{R}_L$ is the based root datum 
$\mc{R}_L:=(R_L,X_L,R_L^\vee,Y_L,F_L)$ where $X_L$ is the dual lattice 
to $Y_L$ (this is the quotient of the lattice $X$ by the sublattice $X^L$ 
of characters of $T^L$),  $m_L$ is defined as in \ref{subsub:rescos}, and 
$F_L$ the base of $R_L$ such that $R_{L,+}=R_L\cap R_{0,+}$.
We denote by $K_L$ the finite subgroup $T_L\cap T^L$ (this is the group 
of characters of the finite abelian group $X_L/(X\cap \mathbb{Q}R_L)$).

A crucial role in the Fourier transform of the Schwartz algebra completion 
$\mc{S}_\mb{v}$ of $\mc{H}_\mb{v}$ is played by the groupoid 
$\mc{W}_{\Xi,\mb{v}}$ of 
tempered standard induction data via the Fourier transform (see \cite{DeOp1}).
In the present context it is natural to consider $\mc{W}_{\Xi,\mb{v}}$ as the
fiber at $\mb{v}$ of a smooth, \'etale groupoid $\mc{W}_\Xi$ trivially fibered over
$\mathbb{R}_{>1}$ (equipped with the trivial groupoid structure). We will
first describe $\mc{W}_\Xi$ now.

Let $P\subset F_0$ be a subset and let $T^P\subset T$ be the subtorus over $\Lb$ 
with cocharacter lattice $Y^P:=Y\cap R_P^\vee$. 
There exists a unique real structure on $T^P$ given by the
$\mathbb{C}$-anti-linear
$\mathbb{R}$-algebra automorphism $\Lb[X^P]\to\Lb[X^P]$ defined by $v\to v$ and
$x\to -x$ for all $x\in X^P$. We denote by $T^P(\mathbb{R})\to\mathbb{R}^\times$
the corresponding set of $\mathbb{R}$-points of $T^P$
(thus $T^P(\mathbb{R})\approx \mathbb{R}^\times\times (S^1)^{(|F_0|-|P|)}$).
Let $T^{P,r}\subset T^P(\mathbb{R})$ be the subset of
points that lie above $\mathbb{R}_{>1}\subset\mathbb{R}^\times$. We define
$T_P^r\to\mathbb{R}_{>1}$ similarly. We equip $T^{P,r}\to \mathbb{R}_{>1}$ and
$T_P^r\to \mathbb{R}_{>1}$ with the analytic topology.

The set of objects $\Xi$ is a
disjoint union of components $\Xi_{(P,\delta)}$, where
$P$ runs over all subsets $P\subset F_0$ and 
$\delta\in\Delta_{P,m}$. The elements of the component
$\Xi_{(P,\delta)}$ are triples $(P,\delta,t)$ with
$t\in T^{P,r}$. We denote by $\Xi_\mb{v}$ the subset of objects
$(P,\delta,t)$ with $t\in T^{P,r}_\mb{v}$.
Thus $\Xi_{(P,\delta,\mb{v})}$ is a copy of the compact
torus $T^P_{\mb{v},u}$ for each $\mb{v}>1$.

The arrows of $\mc{W}_\Xi$ are defined as follows.
Let $K_P=T_P^r\cap T^{P,r}\to\mathbb{R}_{>1}$
(thus $K_{P,\mb{v}}$ is a finite abelian group for all $\mb{v}>1$).
Given $P,Q\subset F_0$, let $W(P,Q):=\{w\in W_0\mid w(P)=Q\}$.
Given objects $\xi=(P,\delta,t)\in\Xi_\mb{v}$ and
$\eta=(Q,\sigma,s)\in\Xi_\mb{v^\prime}$ we put
\begin{equation}
\textup{Hom}_{\mc{W}}(\xi,\eta)=\{(k,w)\in K_Q\times W(P,Q)
\mid w(P)=Q,\,(\delta^w)_{k^{-1}}\simeq \sigma,\,kw(t)=s\}
\end{equation}
The composition of arrows is defined by $(k,u)\circ(l,v)=(u(l)k,uv)$.
Observe that the arrows respect the fibers $\Xi_\mb{v}$ of the natural map
$\Xi\to\mathbb{R}_{>1}$. Hence if we equip $\mathbb{R}_{>1}$ with the trivial
groupoid structure then the above map extends in the obvious way to a homomorphism
$\mc{W}_{\Xi}\to\mathbb{R}_{>1}$ of groupoids.
We introduce ``scaling isomorphisms'' between the fibers of the groupoid $\mc{W}_\Xi$
\begin{equation}\label{eq:scalexi}
\phi_{\ep,\mb{v}}:\mc{W}_{\Xi,\mb{v}}\to\mc{W}_{\Xi,\mb{v}^\ep}
\end{equation}
for $\ep>0$ which are identical on the sets of morphisms (in the obvious sense,
viewing a morphism as an element of some $K_Q\times W(P,Q)$) and which
act on objects by $\phi_{\ep,\mb{v}}(P,\delta,t):=(P,\delta,t^\ep)$,
where $t^\ep\in T^{P,r}$ is defined by $v(t^\ep)=\mb{v}^\ep$ and for all
$x\in X^P$, $x(t^\ep)=x(t)$.
In particular the fibration $\mc{W}_\Xi\to\mathbb{R}_{>1}$ is trivial.

The restriction $\mc{F}_\mc{Z}$ of the Fourier transform of $\mc{S}_\mb{v}$
to the center
$\mc{Z}_{\mc{S},\mb{v}}$ of $\mc{S}_\mb{v}$ defines
an isomorphism \cite[Corollary 5.5]{DeOp1}
\begin{equation}\label{eq:FZ}
\mc{F}_\mc{Z}:\mc{Z}_{\mc{S},\mb{v}}\to C^\infty(\Xi_\mb{v})^{\mc{W}}
\end{equation}
between $\mc{Z}_{\mc{S},\mb{v}}$ and the ring of smooth, $\mc{W}$-invariant
functions on $\Xi_{\mb{v}}$. This isomorphism extends to
an isomorphism of $C^*$-algebras, and shows  that we may define
a continuous map
\begin{equation}\label{eq:cctempv}
cc^{temp}_\mb{v}:\mathfrak{S}_\mb{v}\to
\mc{W}\backslash\Xi_\mb{v}
\end{equation}
which sends a subrepresentation $\pi<\pi(\xi)$ of the standard tempered
representation $\pi(\xi)$ to the orbit $\mc{W}\xi\in\mc{W}\backslash\Xi_\mb{v}$
(``disjointness'', see \cite[Corollary 5.6]{DeOp1}).
This map is clearly compatible with the scaling maps (\ref{eq:scaletemp}) and
(\ref{eq:scalexi}). In particular these maps are the fibers of a continuous map
\begin{equation}\label{eq:cctempgen}
cc^{temp}:\mathfrak{S}\to
\mc{W}\backslash\Xi
\end{equation}
Another consequence of (\ref{eq:FZ}) is the fact that
the components $\mathfrak{S}_{(P,\delta,\mb{v})}$
of the tempered spectrum \cite[Corollary 5.8]{DeOp1} of
$\mc{H}_\mb{v}$
are parametrized by association classes of pairs
$(P,\delta)$ with $P\subset F_0$ a standard parabolic subset and
$\delta\in\Delta_{P}$. Thus the same holds for the spectrum
of $\mc{C}$:
\begin{equation}\label{eq:cccomp}
\mathfrak{S}=
\coprod_{(P,\delta)/\sim}\mathfrak{S}_{(P,\delta)}
\end{equation}
In addition, for each $\mb{v}>1$ there exist dense open subsets
$\mathfrak{S}_{(P,\delta,\mb{v})}^\prime\subset \mathfrak{S}_{(P,\delta,\mb{v})}$
and a dense open subsets
$\Xi^\prime_{(P,\delta,\mb{v})}\subset\Xi_{(P,\delta,\mb{v})}$ such that
\begin{equation}\label{eq:ccgen}
cc^{temp}_{\mb{v}}|_{\mathfrak{S}_{(P,\delta,\mb{v})}^\prime}:\mathfrak{S}_{(P,\delta,\mb{v})}^\prime\to
\mc{W}_{(P,\delta)}\backslash\Xi^\prime_{(P,\delta,\mb{v})}
\end{equation}
is a homeomorphism (see \cite[Theorem 4.39]{Opd1};
the map $[\pi]$ (loc. cit.) yields the inverse homeomorphism).
It is obvious from the definitions of \cite[paragraph 4.5.2]{Opd1}
that the sets $\mathfrak{S}_{(P,\delta,\mb{v})}^\prime$ and $\Xi^\prime_{\mb{v}}$
are compatible with the scaling isomorphisms
(\ref{eq:scaletemp}), (\ref{eq:scalexi}). Hence we obtain
open dense subsets $\mathfrak{S}_{(P,\delta)}^\prime\subset \mathfrak{S}_{(P,\delta)}$,
and open dense subsets $\Xi^\prime_{(P,\delta)}\subset\Xi_{(P,\delta)}$,
and a homeomorphism
\begin{equation}\label{eq:ccgengl}
cc^{temp}|_{\mathfrak{S}_{(P,\delta)}}:\mathfrak{S}_{(P,\delta)}^\prime\to
\mc{W}_{(P,\delta)}\backslash\Xi^\prime_{(P,\delta)}
\end{equation}
\subsubsection{The algebraic central character map}
The restriction $p_Z^{temp}$ of the algebraic central character map
\begin{equation}
p_Z:\textup{Irr}(\mc{H})\to W_0\backslash T(\mathbb{C})
\end{equation}
(recall that $T(\mathbb{C})=
\textup{Hom}(\mathbb{Z}\times X,\mathbb{C}^\times)\simeq \mathbb{C}^\times\times\textup{Hom}(X,\mathbb{C}^\times)$) 
to $\mathfrak{S}\subset\textup{Irr}(\mc{H})$
factors via $cc^{temp}$. Indeed, the map
\begin{align*}
p_{\Xi}:\mc{W}\backslash\Xi&\to W_0\backslash T(\mathbb{C})\\
\mc{W}\backslash\Xi_\mb{v}\ni\mc{W}(P,\delta,t)&\to W_0(gcc(\delta)(\mb{v})t)
\end{align*}
is well defined and it is obvious that we have a commuting diagram
as follows:
\begin{equation}\label{cd}
\begin{CD}
\mathfrak{S}@>cc^{temp}>>\mc{W}\backslash\Xi\\
@V{p_Z^{temp}}VV    @VV{p_\Xi}V\\
W_0\backslash T(\mathbb{C})@=W_0\backslash T(\mathbb{C})
\end{CD}
\end{equation}
Now we describe the image $S:=p_Z^{temp}(\mathfrak{S})$ of $p_Z^{temp}$,
and its components. If $L$ is a residual coset over $\mathbf{L}$
(see section \ref{par:polezero}) we can write it in the form $L=rT^P$ where
$r$ is a $(\mc{R}_L,m_L)$-residual point over $\mb{L}$ by
Theorem \ref{thm:genrescos}(ii). We define the corresponding
\emph{tempered residual coset} $L^{temp}$ to be the subset
\begin{equation}
L^{temp}:=\bigsqcup_{\mb{v}>1}r(\mb{v})T^P_{\mb{v},u}\subset T(\mathbb{C})
\end{equation}
of its $\mathbb{C}$-points. We equip $L^{temp}$ with the relative topology
with respect to the analytic topology on the complex variety $T(\mathbb{C})$.
The set of points of $L^{temp}$ that lie over $\mb{v}>1$ is denoted by
$L^{temp}_\mb{v}$. This is a tempered residual coset in the sense of \cite{Opd1}.
We remark that $L^{temp}$ is independent of the chosen decomposition $L=rT^P$.
\begin{thm}\label{thm:S}
The image $S=p_Z^{temp}(\mathfrak{S})$ of $p_Z^{temp}$ is equal the union
\begin{equation}
S:=\bigsqcup_{L \mathrm{\ residual\ over\ }\mb{L}} L^{temp}\subset
W_0\backslash T(\mathbb{C})
\end{equation}
For all $L\in\mc{L}$ the subset
$W_0L^{temp}\subset W_0\backslash T(\mathbb{C})\subset S$ is a component of $S$.
\end{thm}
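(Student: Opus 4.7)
The plan is to combine the component decomposition (\ref{eq:cccomp}) of $\mathfrak{S}$ with the commutative diagram (\ref{cd}), analyzing the image $p_Z^{temp}(\mathfrak{S}_{(P,\delta)})$ of each tempered component and identifying it with the $W_0$-orbit of a tempered residual coset.

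Fix a standard parabolic $P\subset F_0$ and a generic discrete series character $\delta\in\Delta_P$. By Theorem \ref{thm:gencc} we have $gcc(\delta)=W_P\cdot r$ for some $r\in\textup{Res}(\mc{R}_P,m_P)$. A point of $\mathfrak{S}_{(P,\delta,\mb{v})}$ is an irreducible summand of a standard tempered representation $\pi(P,\delta,t)$ with $t\in T^{P,r}_\mb{v}\simeq T^P_{\mb{v},u}$; by the definition of $p_\Xi$ and commutativity of (\ref{cd}) this point is sent under $p_Z^{temp}$ to $W_0(r(\mb{v})t)\in W_0\backslash T(\mathbb{C})$ (recall that induction preserves central character). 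So the image of $\mathfrak{S}_{(P,\delta,\mb{v})}$ is exactly $W_0\cdot r(\mb{v})T^P_{\mb{v},u}$. Setting $L:=rT^P$, Theorem \ref{thm:genrescos}(ii) shows that $L\in\mc{L}$, and by construction $L^{temp}_\mb{v}=r(\mb{v})T^P_{\mb{v},u}$; letting $\mb{v}$ vary yields $p_Z^{temp}(\mathfrak{S}_{(P,\delta)})=W_0L^{temp}$. Conversely, every $L\in\mc{L}$ arises in this way: after a $W_0$-translation we may assume $R_L=R_P$ for some standard parabolic $P$, write $L=rT^P$ with $r$ a generic residual point of $(\mc{R}_P,m_P)$ via Theorem \ref{thm:genrescos}(ii), and apply Theorem \ref{thm:ratfdeg}(i) to find $\delta\in\Delta_P$ with $gcc(\delta)=W_P r$. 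Together these yield the equality $S=\bigcup_{L\in\mc{L}}W_0L^{temp}$.

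For the connectedness and component claim, each $L^{temp}=\bigsqcup_{\mb{v}>1}r(\mb{v})T^P_{\mb{v},u}$ is a trivial fibration over $\mathbb{R}_{>1}$ (via the natural identification of compact torus fibers across $\mb{v}$) with connected fibers, hence connected, and its image $W_0L^{temp}$ in $W_0\backslash T(\mathbb{C})$ remains connected. The main obstacle is to verify that distinct $W_0$-orbits $[L],[L']\in W_0\backslash\mc{L}$ produce disjoint subsets of $W_0\backslash T(\mathbb{C})$; granted this, each $W_0L^{temp}$ is automatically clopen in $S$ and is therefore a connected component. For the disjointness, the bijection $\mc{L}\to\mc{L}_\mb{v}$ of Theorem \ref{thm:genrescos}(v) (valid for $\mb{v}\in\mathbb{R}_+\setminus\{1\}$) reduces the question fiberwise to the known disjointness of $W_0$-orbits of tempered residual cosets in $T_\mb{v}$ established in \cite{Opd1} (distinct $W_0$-orbits support inequivalent tempered central characters, since they parametrize inequivalent parabolically induced tempered standard representations of $\mc{H}_\mb{v}$). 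Combined with continuity of the scaling maps (\ref{eq:scaletemp}), this fiberwise disjointness propagates to $W_0\backslash T(\mathbb{C})$, completing the proof.
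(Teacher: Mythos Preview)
Your approach is essentially the same as the paper's: compute $p_Z^{temp}(\mathfrak{S}_{(P,\delta)})$ via the diagram (\ref{cd}) and Theorem \ref{thm:gencc}, identify it with $W_0L^{temp}$ for $L=rT^P$ via Theorem \ref{thm:genrescos}(ii), and then reduce the component assertion to the fiberwise disjointness of $W_0$-orbits of tempered residual cosets. You also handle the converse direction (every $L\in\mc{L}$ arises) more explicitly than the paper, correctly invoking Theorem \ref{thm:ratfdeg}(i).

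The one weak spot is your parenthetical justification of the disjointness. The sentence ``distinct $W_0$-orbits support inequivalent tempered central characters, since they parametrize inequivalent parabolically induced tempered standard representations'' is not an argument: two inequivalent standard induced representations can perfectly well share a central character, so nothing prevents $L_1^{temp}\cap L_2^{temp}\neq\emptyset$ on those grounds alone. Your citation of \cite{Opd1} is correct (this is \cite[Theorem A.18]{Opd1}), but the actual proof there is geometric---one passes to a suitable subsystem $R_{12}\subset R_0$, reduces to real residual affine subspaces, and then invokes the Bala--Carter classification (simply laced case) and Slooten's combinatorics (types $\textup{B}/\textup{C}$, $\textup{F}_4$) to see that a $W_0$-orbit of residual subspaces is determined by the $W_0$-orbit of its center. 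The paper isolates this as a separate Lemma (immediately following the theorem) and fills in the details precisely because the proof in \cite{Opd1} was only sketched. So: drop the parenthetical, cite \cite[Theorem A.18]{Opd1} cleanly, and be aware that this is the genuinely nontrivial step.

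Finally, the closing remark about scaling maps is unnecessary: fiberwise disjointness for every $\mb{v}>1$ already gives disjointness of $\bigsqcup_{\mb{v}>1}L_{1,\mb{v}}^{temp}$ and $\bigsqcup_{\mb{v}>1}L_{2,\mb{v}}^{temp}$, with no continuity argument needed.
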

\begin{proof}
Consider $S_{(P,\delta)}:=p_Z^{temp}(\mathfrak{S}_{(P,\delta)})$.
By the above commuting diagram we can rewrite this as
$S_{(P,\delta)}=p_\Xi(\mc{W}_{(P,\delta)}\backslash\Xi_{(P,\delta)})$. Using
the definition of $p_\Xi$ and of $\Xi_{(P,\delta)}$, Theorem \ref{thm:genrescos} and
Theorem \ref{thm:gencc} it is clear that
\begin{equation}
S_{(P,\delta)}=\bigsqcup_{\mb{v}>1}W_0(r(\mb{v})T^P_{\mb{v},u})\subset W_0\backslash T(\mathbb{C})
\end{equation}
where $r$ is an $(\mc{R}_P,m_P)$-residual point of $T_P$ such that $gcc(\delta)=W_Pr$.
By Theorem \ref{thm:genrescos}(ii) we see that $L_{(P,r)}=rT^P$ is a residual coset, and
clearly $S_{(P,\delta)}=W_0L^{temp}_{(P,r)}$. This proves that
$S=\cup_{(P,\delta)/\sim}S_{(P,\delta)}$. The subsets $S_{(P,\delta)}$ are clearly
connected. We will now show that these sets are in fact the components of $S$ (thus proving
the remaining assertions of the Theorem). It is enough
to show that if $L_1=r_1T^{P_1}$ and $L_2=r_2 T^{P_2}$ are residual cosets
then their tempered parts do not intersect each other unless possibly if
$L_1$ and $L_2$ are in the same $W_0$-orbit.
It is enough to check this for the fibers $L_{i,\mb{v}}$ over $\mb{v}>1$, and
this is the content of Lemma \ref{lem:app} below.
\end{proof}
\begin{lem}\label{lem:app}
Let $L_1,L_2\in\mc{L}_\mb{v}$ for some $\mb{v}>1$. Then
\begin{equation}
L^{temp}_1\cap L^{temp}_2\not=\emptyset\Longrightarrow
L_1=w(L_2) \mathrm{\ for\ some\ }w\in W_0
\end{equation}
\end{lem}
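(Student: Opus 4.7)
The plan is to reduce the claim to a rigidity property of \emph{real} tempered residual points via the polar decomposition $T_\mb{v}(\mathbb{C}) = V_\mb{v} \times T_{\mb{v},u}$, where $V_\mb{v} = \textup{Hom}(X,\mathbb{R}_{>0})$ and $T_{\mb{v},u}$ is the compact unitary form. By Theorem \ref{thm:genrescos}(ii), I would write $L_i = r_i T^{P_i}$ with $r_i$ an $(\mc{R}_{L_i},m_{L_i})$-residual point over $\mb{L}$, and for a chosen lift $\tilde r_i \in T(\mb{L})$ the tempered fibre becomes $L_{i,\mb{v}}^{temp} = \tilde r_i(\mb{v})T^{P_i}_{\mb{v},u}$. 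The polar projection $\pi: T_\mb{v}(\mathbb{C}) \to V_\mb{v}$ is trivial on $T^{P_i}_{\mb{v},u}$, so $\pi$ collapses each $L_{i,\mb{v}}^{temp}$ to a single point $c_i := \pi(\tilde r_i(\mb{v}))$.

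Given $t \in L_1^{temp} \cap L_2^{temp}$, I would choose the lifts so that $t = \tilde r_i(\mb{v}) s_i$ with $s_i \in T^{P_i}_{\mb{v},u}$, which forces $c_1 = \pi(t) = c_2$ in $V_\mb{v}$. Using \cite[Proposition 2.56]{OpdSol2} together with Theorem \ref{thm:genrescos}(ii), the point $c_i$ is a real tempered residual point for the parabolic subsystem $R_{L_i} \subset R_0$ at parameter $\mb{v}$. The essential input — and the genuinely hard step — is the rigidity statement of \cite[Theorem A.14(ii)]{Opd1} (with classification-free proofs in \cite{Opd3}): the $W_0$-orbit of a real tempered residual point uniquely determines the associated parabolic root subsystem. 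This forces $R_{L_1}$ and $R_{L_2}$ to be $W_0$-conjugate, so after replacing $L_2$ by an appropriate $W_0$-translate (which the Lemma's conclusion allows) I may assume $R_{L_1} = R_{L_2} = R_P$, and in particular $T^{P_1} = T^{P_2} = T^P$.

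It then remains to upgrade the equality of $W_0$-orbits of the parabolic subsystems to equality $L_1 = L_2$. From $\tilde r_1(\mb{v}) s_1 = \tilde r_2(\mb{v}) s_2$ with $s_i \in T^P_{\mb{v},u} \subset T^P_\mb{v}$ we get $\tilde r_1(\mb{v}) \equiv \tilde r_2(\mb{v}) \pmod{T^P_\mb{v}}$, so the residual points $r_1, r_2$ have equal images in $T_{P,\mb{v}}$. Theorem \ref{thm:genrescos}(v) then lifts this to the equality $r_1 = r_2$ over $\mb{L}$, yielding $L_1 = r_1 T^P = r_2 T^P = L_2$, as desired. The polar decomposition together with these last reductions are essentially formal; the substantive content of the proof is the rigidity of real tempered residual points invoked in the middle paragraph, for which I would quote the aforementioned results of \cite{Opd1}, \cite{Opd3} rather than reprove them here.
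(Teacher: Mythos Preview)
Your opening reduction via the polar decomposition is exactly how the paper begins, and the equality $c_1=c_2$ of split parts is correct. The gap is in the next step. You assert that $c_i=|r_i(\mb{v})|$ is a real residual point for the parabolic subsystem $R_{L_i}$, but this is false in general: if $r_i=s_ic_i$ is the polar decomposition with $s_i\in T_{P_i,u}$ nontrivial, then $c_i$ is only a residual point for the smaller (generally non-parabolic) subsystem $(R_{P_i})_{s_i}=\{\alpha\in R_{P_i}\mid\alpha(s_i)=1\}$, not for $R_{P_i}$ itself. Consequently $c_i$ need not be the centre of a real residual affine subspace for $R_0$, and the rigidity statement you want to invoke does not apply directly. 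The paper flags this explicitly (``Unfortunately this real point is not necessarily the center of a real residual affine subspace\dots'') and repairs it by passing to an auxiliary subsystem $R_{12}\subset R_0$ consisting of those roots whose values on $L_1\cap L_2$ are real (or $\pm$ real in the non-reduced case). In $R_{12}$ the unitary parts $s_i$ become $W(R_{12})$-central, so the translated cosets $c_iT^{P_i}$ are genuine $(R_{12},m_{12})$-residual cosets, and one is reduced to the purely real statement that $W(R_{12})$-orbits of real residual affine subspaces are determined by the $W(R_{12})$-orbits of their centres.

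Two further points. First, your citation of \cite[Theorem A.14(ii)]{Opd1} is not apt: that result concerns the symmetry $gcc(\delta)(\mb{v})=gcc(\delta)(\mb{v}^{-1})$, not the ``centre determines the residual subspace'' rigidity you need. The paper obtains the latter rigidity case by case: Bala--Carter for simply laced types (via the classification of nilpotent orbits, \cite[Remark 7.5]{Opd3}), Slooten's thesis \cite[Theorems 1.5.3, 1.5.5]{SlootenThesis} for types $\textup{B}_n$ and $\textup{C}_n$, and direct verification for $\textup{G}_2$ and $\textup{F}_4$. Second, your final paragraph is essentially fine once the parabolic subsystems have been matched, though one should argue with $W_P$-conjugacy of residual points rather than literal equality, since $r_1$ and $r_2$ may differ by an element of the finite group $K_P=T_P\cap T^P$ and by a $W_P$-translate.
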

\begin{proof}
This is \cite[Theorem A.18]{Opd1}, but since this fact did not
play a role in \cite{Opd1} the proof was indicated only very briefly there.
We will fill in the details now.

Write $L_1=r_1T^{P_1}_\mb{v}$ and $L_2=r_2T^{P_2}_\mb{v}$ with $r_i\in T_{P_1,\mb{v}}$
residual points for $(\mc{R}_{P_i},m_{P_i})$, where $R_{P_i}\subset R_0$ are parabolic
subsets of roots. Write $c_i=|r_i|$. Suppose that $t\in L^{temp}_1\cap L^{temp}_2$.
Then clearly $|t|=c_1=c_2$. Unfortunately this real point is not necessarily the
center of a real residual affine subspace (in the sense of \cite{HO}, also see
\cite[Appendix A]{Opd1} and \cite{Opd3}) of the vector group $V:=\textup{Hom}(X,\mathbb{R}_+)$,
and we need to manipulate a bit (by shrinking $R_0$ in a suitable way)
to arrive at such a favorable situation. We identify the real vector space 
$\mathbb{R}\otimes_\mathbb{Z} Y$ with $V$ via the exponential homeomorphism
$\exp(\xi\otimes y)(x):=\exp(\xi(x(y)))$.

Consider the subset $R_{12}\subset R_0$ consisting of the
roots $\alpha\in R_0\cap R_m$ such that $\alpha(L_1\cap L_2)\subset\mathbb{R}_+$ together
with the roots $\beta\in R_0\backslash R_m$ such that $\pm\alpha(L_1\cap L_2)\subset\mathbb{R}_+$.
It is clear that $R_{12}\subset R_0$ is a subsystem of roots.
We form the affine root datum $\mc{R}_{12}=(X,R_{12},Y,R_{12}^\vee)$.
We equip $R_{12}$ with the restriction to $R_{12}$ of the half integral functions
$m_\pm$ on $R_0$.
Then this corresponds to a multiplicity function $m_{12}$ for $\mc{R}_{12}$.
We thus obtain a pair $(\mc{R}_{12},m_{12})$ consisting of an affine root datum
and a multiplicity function. The first important property of $(\mc{R}_{12},m_{12})$
is the fact that $L_1$ and $L_2$ are still residual cosets
with respect to $(R_{12},m_{12})$. Indeed, the pole sets and the zero sets
of roots for either $L_1$ or $L_2$ are clearly subsets of $R_{12}$ (see \ref{subsub:rescos}).

Put $s_i=tc_i^{-1}$ (recall that $t\in L^{temp}_1\cap L^{temp}_2$).
By definition of $R_{12}$ the unitary points $s_i$ satisfy $\alpha(s_i)=1$ for all
$\alpha\in R_{12,m}$.
Hence the points $s_i$ are fixed by $W(R_{12})$, and $L_i^\prime:=s_i^{-1}L_i=c_iT^{P_i}_\mb{v}$
is also a $(\mc{R}_{12},m_{12})$-residual coset.
We define $L_i^V:=L^\prime_i\cap V$ with $V:=\textup{Hom}(X,\mathbb{R}_+)$.

We have obtained two residual real affine subspaces $L_i^V$ of $V$ with respect to the
root system $R_{12}\subset R_0$ and a multiplicity function $m^h_{12}$ for a suitable $h$
(see \cite[Theorem A.7]{Opd1}).
Moreover, the center of $L_i^V$ equals $c_i$ and, as we have seen above,
these centers coincide $c_1=c_2$.
Now we are reduced to showing that the set of $W(R_{12})$-orbits of $(R_{12},m^h_{12})$
residual real affine subspaces is in bijection with the set of $W(R_{12})$-orbits of
the centers of residual real affine subspaces.

If the parameters function $m^h_{12}$ is constant on $R_{12}$ then this assertion
follows from the Bala-Carter theorem on the classification of nilpotent orbits of
$\mathfrak{g}(R_{12},\mathbb{C})$, see \cite[Remark 7.5]{Opd3}. Observe that this
problem for affine residual subspaces is completely reducible, and thus we may assume
that $R_{12}$ is irreducible without loss of generality.

If $R_{12}$ is simply laced we are done by the above
application of the Bala-Carter theorem. If $R_{12}$ is of type $\textup{B}_n$ or $\textup{C}_n$ then
this was proved by Slooten \cite[Theorem 1.5.3, Theorem 1.5.5]{SlootenThesis}. For
$R_{12}$ of type $\textup{G}_2$ it is a trivial verification, and for type $\textup{F}_4$ is a more involved
case by case verification which was done by Slooten (private communication, 2001).
\end{proof}
\begin{prop}\label{prop:S}
\begin{enumerate}
\item[(i)] The finite map $p_Z^{temp}:\mathfrak{S}\to S$
maps components to components. The components of $S$
are of the form $p_Z^{temp}(\mathfrak{S}_{(P,\delta)})=S_{(P,\delta)}$.
\item[(ii)] For each component $S_{(P,\delta)}$ of $S$ there exists a
residual coset $L\in\mc{L}$, unique up to the action of $W_0$ on $\mc{L}$,
such that $S_{(P,\delta)}=W_0\backslash W_0L^{temp}$.
\item[(iii)]
Let $L\in\mc{L}$ be as in (ii). The map $q_L:L^{temp}\to S_{(P,\delta)}$ given by
$q_L(t)=W_0t$ is an identification map whose fibers are the $N_{W_0}(L)$-orbits
in $L^{temp}$. Hence $S_{(P,\delta)}\approx N_{W_0}(L)\backslash L^{temp}$.
\item[(iv)]
The restriction of $p_Z^{temp}$ to $\mathfrak{S}_{(P,\delta)}^\prime$ is a finite
covering map onto an open, dense subset
$S^\prime_{(P,\delta)}\subset S_{(P,\delta)}$ of $S_{(P,\delta)}$.
\item[(v)] Let $L\in\mc{L}$ be as in (ii), and let
$(L^{temp})^\prime\subset L^{temp}$ be the inverse image of
$\mathfrak{S}_{(P,\delta)}^\prime\subset\mathfrak{S}_{(P,\delta)}$
with respect of the quotient map $q_L$ of (iii).
Then $N_{W_0}(L)$ acts freely on $(L^{temp})^\prime$. This gives
$S_{(P,\delta)}^\prime$ the structure of a smooth manifold.
\end{enumerate}
\end{prop}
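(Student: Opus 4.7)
The plan is to deduce all five assertions from the material already assembled, principally Theorem \ref{thm:S}, the commuting diagram (\ref{cd}), the decomposition (\ref{eq:cccomp}), the homeomorphism (\ref{eq:ccgengl}) on the primed open locus, and Lemma \ref{lem:app}.

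First I would handle (i) and (ii) together. By (\ref{eq:cccomp}), the connected components of $\mathfrak{S}$ are the pieces $\mathfrak{S}_{(P,\delta)}$. Chasing the commuting square (\ref{cd}) one finds
\[
p_Z^{temp}(\mathfrak{S}_{(P,\delta)})=p_\Xi(\mc{W}_{(P,\delta)}\backslash \Xi_{(P,\delta)}),
\]
and the definition of $p_\Xi$ combined with Theorem \ref{thm:gencc} identifies this image with $W_0 L^{temp}_{(P,r)}$, where $r$ is any generic residual point with $gcc(\delta)=W_P r$ and $L_{(P,r)}=rT^P$. Theorem \ref{thm:S} then guarantees that $W_0 L^{temp}_{(P,r)}$ is a connected component of $S$, so $p_Z^{temp}$ carries components to components and the images are exactly the $S_{(P,\delta)}$, proving (i) and the existence half of (ii). Uniqueness of the $W_0$-orbit of $L$ in (ii) is immediate from Lemma \ref{lem:app}: two residual cosets with $W_0$-translated tempered parts meeting must be $W_0$-conjugate.

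For (iii) the fibre description is essentially a second application of Lemma \ref{lem:app}. If $q_L(t_1)=q_L(t_2)$ with $t_1,t_2\in L^{temp}$, write $t_1=w t_2$ with $w\in W_0$; restricting to the fibre over the common value of $\mb{v}$, the two tempered cosets $L^{temp}_\mb{v}$ and $wL^{temp}_\mb{v}$ meet, forcing $wL=L$ and hence $w\in N_{W_0}(L)$. Thus $q_L$ factors through a continuous bijection $N_{W_0}(L)\backslash L^{temp}\to S_{(P,\delta)}$. To upgrade to a homeomorphism I would argue fibrewise: on each fibre $L^{temp}_\mb{v}$, which is a compact torus, the induced map lands in the Hausdorff space $S_{(P,\delta,\mb{v})}$ and is a continuous bijection of compact Hausdorff spaces, hence a homeomorphism. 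Compatibility of all constructions with the scaling maps of Theorem \ref{thm:cont} together with the trivialisation of Theorem \ref{thm:bundle} then promotes the fibrewise homeomorphisms to a global one over $\mathbb{R}_{>1}$.

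Finally, (iv) and (v) rest on identifying the restriction of $p_Z^{temp}$ to $\mathfrak{S}_{(P,\delta)}^\prime$ with the quotient map $(L^{temp})^\prime\to N_{W_0}(L)\backslash(L^{temp})^\prime$. Using the homeomorphism (\ref{eq:ccgengl}) together with the fibrewise homeomorphism $\Xi_{(P,\delta,\mb{v})}\to L^{temp}_\mb{v}$ sending $(P,\delta,t)$ to $r(\mb{v})t$, this reduces to checking that the action of the isotropy groupoid $\mc{W}_{(P,\delta)}$ on $T^{P,r}$ is transported, under multiplication by $r$, to the restriction of the $N_{W_0}(L)$-action on $L^{temp}$. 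The arrows $(k,w)\in K_P\times W(P,P)$ in $\mc{W}_{(P,\delta)}$ are by definition those for which $\delta^w\simeq \delta_k$ and $kw(t)$ lies over the target object, and multiplication by $r(\mb{v})$ absorbs the finite $K_P$-translation into an $N_{W_0}(L)$-action on $L^{temp}_\mb{v}$, so the identification is a matter of bookkeeping. Once installed, (iv) becomes the statement that a finite group quotient is a covering on the locus of trivial stabilisers, and (v) is the freeness of the $N_{W_0}(L)$-action on $(L^{temp})^\prime$. Freeness is the step I expect to be the main obstacle; it should follow from the fact that $\mathfrak{S}_{(P,\delta,\mb{v})}^\prime$ was defined in \cite[Theorem 4.39]{Opd1} precisely as the locus on which the standard tempered representations $\pi(P,\delta,t)$ are irreducible and pairwise inequivalent modulo the full intertwining action, so points of $(L^{temp})^\prime$ have trivial $N_{W_0}(L)$-stabiliser. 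Once freeness is secured, $S_{(P,\delta)}^\prime=N_{W_0}(L)\backslash (L^{temp})^\prime$ inherits the smooth manifold structure from the smooth bundle of open subsets of compact tori $(L^{temp})^\prime\to \mathbb{R}_{>1}$.
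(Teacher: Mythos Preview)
Your proposal is correct and follows essentially the same route as the paper. Parts (i) and (ii) are handled exactly as in the proof of Theorem \ref{thm:S} (which is precisely what the paper invokes), and your treatment of (iii) unpacks what the paper simply calls ``obvious''. For (iv) and (v) the paper does not argue internally but cites \cite[Corollary 4.40]{Opd1} (together with Theorem \ref{thm:bundle}) for (iv) and \cite[Lemma 4.35, Lemma 4.36]{Opd1} for (v); your sketch via (\ref{eq:ccgengl}), the identification $j_L:\Xi_{(P,\delta)}\to L^{temp}$, and the transport of the $\mc{W}_{(P,\delta)}$-action to the $N_{W_0}(L)$-action is exactly the content of those cited results, so you have effectively reproduced the argument rather than just the citation.
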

\begin{proof}
The first assertion (i) was shown in the first part of the proof of Theorem \ref{thm:S}.
In this argument it was also explained that $S_{(P,\delta)}$ has the form
$S_{(P,\delta)}=W_0\backslash W_0L^{temp}$
for a residual coset $L$ of the form $L=L_{(P,r)}$, implying (ii).
Assertion (iii) is obvious, and (iv)
follows directly from \cite[Corollary 4.40]{Opd1} and Theorem \ref{thm:bundle}.
Finally (v) follows from \cite[Lemma 4.35, Lemma 4.36]{Opd1}.
\end{proof}
\subsubsection{The generic Plancherel measure $\nu_{Pl}$ and regularizations of the $\mu$-function}\label{par:specmeas}
\begin{lem}\label{lem:plan}
Let $L\in\mc{L}$. There exists a $\ep_L\in\{\pm 1\}$
such that the function $m_L:=\ep_L\mu^{(L)}|_{L^{temp}}$ is
a smooth, $N_{W_0}(L)$-invariant positive function on $L^{temp}$.
\end{lem}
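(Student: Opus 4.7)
The plan is to factor $\mu^{(L)}$ along $L=rT^P$ via equation (\ref{eq:musplit}), handle the ``Levi factor'' via positivity of formal degrees, and handle the ``non-Levi factor'' using the unitary structure on $T^P_u$. After replacing $L$ by a $W_0$-translate (which is legitimate since the claim is $N_{W_0}(L)$-invariant) we may assume $R_L=R_P$ is a standard parabolic, so $L=rT^P$ with $r$ a generic $(\mc{R}_P,m_P)$-residual point by Theorem \ref{thm:genrescos}(ii). Then (\ref{eq:musplit}) gives
\begin{equation*}
\mu^{(L)}=\mu^{(\{r\})}_{\mc{R}_P,m_P,d}\cdot
\frac{v^{-2m_{W_0}(w^P)}}{\prod_{\alpha\in R_{0,+}\setminus R_{P,+}}c_{m,\alpha}c_{m,\alpha}^{w^P}}.
\end{equation*}

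The first factor depends only on $\mb{v}$ and, by Theorem \ref{thm:ratfdeg}(i)--(ii), equals $D_\delta^{-1}\,\de(\delta)$ for any $\delta\in\Delta_{W_Pr}(\mc{R}_P,m_P)$ (which is nonempty). Since $\de(\delta)(\mb{v})=\nu_{Pl,\mb{v}}(\{\delta_\mb{v}\})>0$ for all $\mb{v}>1$, this factor is a nowhere vanishing rational function of $\mb{v}$ of the fixed sign $\mathrm{sign}(D_\delta^{-1})$, and contributes nothing in the $t$-direction. For the second factor, fix $\mb{v}>1$ and write a point of $L_\mb{v}^{temp}$ as $r(\mb{v})u$ with $u\in T^P_{\mb{v},u}$. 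On the unitary torus one has $\alpha(u)^{-1}=\overline{\alpha(u)}$ and $w^P$ sends $R_{0,+}\setminus R_{P,+}$ bijectively to $R_{0,-}\setminus R_{P,-}$; grouping each $\alpha$ with the corresponding $-\beta=w^P(\alpha)$ exhibits $c_{m,\alpha}c_{m,\alpha}^{w^P}$ as a real, nonnegative expression of $|c|^2$-type (modulated by the real constants $\alpha(r(\mb{v}))$ whose modulus is independent of $u$). Hence the second factor is real, smooth and of constant sign on each $T^P_{\mb{v},u}$, provided it never vanishes there.

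The absence of zeros is the main technical point. A zero of $c_{m,\alpha}c_{m,\alpha}^{w^P}$ on $L^{temp}$ for $\alpha\in R_{0,+}\setminus R_{P,+}$ would require $\alpha(r(\mb{v})u)\in\{\pm v^{-2m_\pm(\alpha)}\}$ for some $u\in T^P_{\mb{v},u}$. This would produce a proper subcoset $L'\subsetneq L$ along which an extra pole factor becomes constant; the sets $p_\pm(L'),z_\pm(L')$ then strictly contain $p_\pm(L),z_\pm(L)$ while $\operatorname{codim}(L')>\operatorname{codim}(L)$, violating the equality in (\ref{eq:ineqgen}) of Theorem \ref{thm:genrescos}(i) for $L$. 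Combined with the constant sign of the Levi factor, this shows that $\mu^{(L)}|_{L^{temp}_\mb{v}}$ is smooth and of constant sign $\ep_{L,\mb{v}}$ on each fiber; the signs $\ep_{L,\mb{v}}$ are locally constant in $\mb{v}$ and therefore constant on the connected set $\mathbb{R}_{>1}$, yielding a single $\ep_L\in\{\pm 1\}$ such that $m_L:=\ep_L\mu^{(L)}|_{L^{temp}}>0$ and is smooth. Finally, $N_{W_0}(L)$-invariance is immediate from the $W_0$-invariance of $\mu$ on $T$ (visible in Definition \ref{defn:mu}, since $c_mc_m^{w_0}$ is a $W_0$-symmetric product over $R_0$): the regularization (\ref{eq:mureg}) removes only factors indexed by the $N_{W_0}(L)$-stable set $R_L$ and evaluated at the fixed constants $\alpha(r)$, so $\mu^{(L)}\circ w=\pm\mu^{(L)}$ for $w\in N_{W_0}(L)$, with the global sign absorbed into $\ep_L$.
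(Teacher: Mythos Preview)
The paper states this lemma without proof; it is essentially a summary of facts from \cite{Opd1} and \cite{DeOp1} that are cited explicitly in Theorem~\ref{thm:plancherel} and in the proof of Proposition~\ref{prop:z}. Your overall strategy---factoring via (\ref{eq:musplit}), treating the Levi factor by positivity of formal degrees (Theorem~\ref{thm:ratfdeg}), and treating the remaining factor via the $|c|^2$-structure on the unitary torus---is the right one and matches how the paper itself uses these ingredients later (see (\ref{eq:transSigma}) and the surrounding discussion).

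There is, however, a genuine gap in your smoothness argument. You claim that a zero of $c_{m,\alpha}$ on $L^{temp}$ for some $\alpha\in R_{0,+}\setminus R_{P,+}$ yields a subcoset $L'\subsetneq L$ whose enlarged sets $p_\pm(L'),z_\pm(L')$ ``violate the equality in (\ref{eq:ineqgen}) for $L$''. But nothing about $L'$ contradicts the residuality of $L$: the equality $\#p(L)-\#z(L)=\operatorname{codim}(L)$ is a statement about $L$ alone, and the existence of a smaller coset with a larger pole set is perfectly compatible with it. You have neither shown that $L'$ is residual nor explained what would be contradicted if it were (and residual subcosets of $L$ certainly exist in general). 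The regularity of $c^{-1}$ on $\Xi^{temp}_{(P,\delta)}$ is a nontrivial analytic fact; the paper invokes it later as \cite[Proposition~9.8]{DeOp1}, where it follows from unitarity of the normalized intertwining operators (equivalently, from the contour-shift construction of the Plancherel measure in \cite{Opd1}). It does not reduce to root-counting combinatorics on residual cosets.

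A smaller point: $m_L$ is not strictly positive. It vanishes precisely on the ``mirrors'' (the loci $\alpha=\pm 1$ for $\alpha\in R_0\setminus R_P$, i.e.\ the poles of $c$), as the paper itself observes in the proof of Proposition~\ref{prop:z}. So ``positive'' in the lemma must be read as ``non-negative'', and your argument ``constant sign because nowhere vanishing'' should be replaced by ``real-valued, smooth, and of constant sign on the dense open complement of the zero locus'', which still yields $\ep_L\mu^{(L)}|_{L^{temp}}\geq 0$. Once non-negativity is known, your deduction of $N_{W_0}(L)$-invariance (any sign character must be trivial on a non-negative function) is fine.
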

Consider a component $\mathfrak{S}_{(P,\delta)}$ of $\mathfrak{S}$.
As we have seen in Theorem \ref{thm:S} its image
$p_Z^{temp}(\mathfrak{S}_{(P,\delta)})=S_{(P,\delta)}\subset S$ is a component
of $S$. From Corollary \ref{prop:S} we see that there exists a residual
coset $L$ such that $S_{(P,\delta)}=W_0\backslash W_0L^{temp}$.
Consider the smooth $\mathbb{R}_{>1}$-family of volume forms $\nu_L$ on $L^{temp}$
which has density function $m_L$ with respect to the family $d^L(t)$ of
normalized $T^L_{\mb{v},u}$-invariant volume forms on $L^{temp}$.
Let $\nu_L^\prime$ denote the restriction of $\nu_L$ to
$(L^{temp})^\prime$. By Proposition \ref{prop:S}(v) there exists a unique
smooth $\mathbb{R}_{>1}$-family of volume forms $\nu_S^\prime$ on $S_{(P,\delta)}^\prime$
such that $\nu_L^\prime=q_L^*(\nu_S^\prime)$. Using Proposition
\ref{prop:S}(iv) we define a smooth $\mathbb{R}_{>1}$-family of volume forms on
$\mathfrak{S}_{(P,\delta)}$
by $\nu_{\mathfrak{S}}^\prime:=(p^{temp}_Z)^*(\nu_S^\prime)$.
\begin{thm}\label{thm:plancherel}
Let $P\subset F_0$, $\delta\in\Delta_P$
and let $L\in\mc{L}$ such that $S_{(P,\delta)}=W_0\backslash W_0L^{temp}$.
There exists a constant $a_{(P,\delta)}\in\mathbb{Q}_+$ such that the
restriction of $\nu_{Pl}$ to $\mathfrak{S}_{(P,\delta)}$ is given by
$\nu_{Pl}^\prime=a_{(P,\delta)}i_*(\nu_{\mathfrak{S}}^\prime)$, where
$i$ denotes the open embedding
$i:\mathfrak{S}_{(P,\delta)}^\prime\to \mathfrak{S}_{(P,\delta)}$.
\end{thm}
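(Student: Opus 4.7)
The strategy is to verify the identity fiberwise at each parameter $\mb{v}>1$ using the pointwise Plancherel formula for $\mc{H}_\mb{v}$ from \cite{Opd1,DeOp1}, and then to invoke the continuous triviality of the $C^*$-bundle $\mc{C}\to\mathbb{R}_{>1}$ (Theorem \ref{thm:bundle}) to promote the fiberwise identity into the global one. The key algebraic ingredient is Theorem \ref{thm:ratfdeg}(ii) combined with the factorization (\ref{eq:musplit}): together they identify the pointwise Plancherel density with $\mu^{(L)}$ up to a rational constant that is manifestly independent of $\mb{v}$.

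Fix $\mb{v}>1$, and let $r$ be a representative of $gcc(\delta)$, so that $L=rT^P$. By the Plancherel formula \cite{Opd1,DeOp1}, under the covering map $cc^{temp}_\mb{v}\colon\mathfrak{S}_{(P,\delta,\mb{v})}'\to\mc{W}_{(P,\delta)}\backslash\Xi_{(P,\delta,\mb{v})}'$, the restriction of $\nu_{Pl,\mb{v}}$ to $\mathfrak{S}_{(P,\delta,\mb{v})}'$ lifts to the measure on $\Xi_{(P,\delta,\mb{v})}'$ whose density, relative to the normalized $T^P_{\mb{v},u}$-invariant volume, equals a positive rational constant $b_{(P,\delta)}$ (depending only on $|W_0|/|W_{(P,\delta)}|$ and the induction multiplicities of $\pi(P,\delta,t)$) times
$$\de(\delta)(\mb{v})\prod_{\alpha\in R_{0,+}\backslash R_{P,+}}\bigl(c_{m,\alpha}c_{m,\alpha}^{w^P}\bigr)^{-1}\bigl(\mb{v},r(\mb{v})t\bigr)\,v^{-2m_{W_0}(w^P)}(\mb{v}).$$
Applying Theorem \ref{thm:ratfdeg}(ii) to write $\de(\delta)(\mb{v})=D_\delta\mu^{(\{r\})}(\mb{v},r(\mb{v}))$ with $D_\delta\in\mathbb{Q}^\times$, and then using (\ref{eq:musplit}), this expression reduces to $D_\delta\mu^{(L)}(\mb{v},r(\mb{v})t)$. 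By Lemma \ref{lem:plan}, $\ep_L\mu^{(L)}|_{L^{temp}}=m_L$, and the translation $t\mapsto r(\mb{v})t$ identifies $T^P_{\mb{v},u}$ with $L^{temp}_\mb{v}$ while carrying $d^L$ back to the normalized $T^P_{\mb{v},u}$-invariant volume; hence the lifted Plancherel density equals $a_{(P,\delta)}:=\ep_L D_\delta b_{(P,\delta)}\in\mathbb{Q}^\times$ times the pullback of $\nu_L|_{L^{temp}_\mb{v}}$. By Proposition \ref{prop:S}(iii)--(v), this measure descends via $q_L$ to $a_{(P,\delta)}\nu_S'$ on $S_{(P,\delta)}'$ and pulls back under $p_Z^{temp}$ to $a_{(P,\delta)}\nu_\mathfrak{S}'$ on $\mathfrak{S}_{(P,\delta,\mb{v})}'$. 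Positivity of $\nu_{Pl,\mb{v}}$ then forces $a_{(P,\delta)}\in\mathbb{Q}_+$.

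The constant $a_{(P,\delta)}$ is visibly independent of $\mb{v}$. Combined with the smooth $\mb{v}$-dependence of both sides — the left from Theorem \ref{thm:bundle}, the right from the real-analyticity of $\delta_\mb{v}$ (Theorem \ref{thm:def}) and the smoothness in $\mb{v}$ of $\mu^{(L)}|_{L^{temp}}$ — the fiberwise identities assemble to an identity of measures on $\mathfrak{S}_{(P,\delta)}'$, which then extends along $i$ to $\mathfrak{S}_{(P,\delta)}$ since the complement of $\mathfrak{S}_{(P,\delta)}'$ has measure zero on both sides. The main obstacle is the combinatorial book-keeping in the fiberwise step: tracking the degrees and multiplicities of the various covering maps $cc^{temp}$, $p_Z^{temp}$, $q_L$, the induction multiplicities of $\pi(P,\delta,t)$, and the sign $\ep_L$, so as to confirm that these constants combine into a single element of $\mathbb{Q}_+$. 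This reduces to a compatibility between the isotropy groupoid $\mc{W}_{(P,\delta)}$ acting on $\Xi_{(P,\delta)}'$ and the $N_{W_0}(L)/W_P$-action on $L^{temp}$, and is mechanical given the multiplicity-free structural results of \cite[Section 4]{DeOp1}.
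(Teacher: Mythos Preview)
Your argument is correct and is essentially an unpacking of the paper's own proof, which simply cites \cite[Theorem~4.43]{Opd1} (the fiberwise Plancherel formula you invoke) and \cite[Theorem~5.12]{OpdSol2} (the rationality and $\mb{v}$-independence of the formal degree constant, which you access via Theorem~\ref{thm:ratfdeg}(ii) and (\ref{eq:musplit})). The extra detail you provide --- the explicit factorization through $\mu^{(L)}$, the descent via Proposition~\ref{prop:S}, and the appeal to Theorem~\ref{thm:bundle} for the global assembly --- is exactly what those two citations encode.
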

\begin{proof}
This follows directly from \cite[Theorem 4.43]{Opd1} and
\cite[Theorem 5.12]{OpdSol2}.
\end{proof}
\section{The spectral transfer category}
In this section we define ``spectral transfer morphisms" between 
normalized affine Hecke algebras. Very briefly, these are finite morphisms 
between the spectra of the centers of the underlying generic affine Hecke algebras,  
which ``respect" the $\mu$-functions. We show that this gives rise to a category 
whose objects are normalized affine Hecke algebras. 
\subsection{Spectral transfer maps} 
\subsubsection{Definition of spectral transfer maps}
In view of Proposition \ref{prop:mucan} the following definition makes sense.
Let $\mc{H}_i=\mc{H}(\mc{R}_i,m_i)$ ($i=1,2$) be two
(possibly extended, not necessarily semisimple) affine Hecke algebras, 
with normalizing elements $d_i\in\mb{M}$.
Recall from 
paragraph \ref{subsub:tempcc} that given a residual coset $L$ of $T_2$ 
we can write $L=rT^L$ with $r\in T_L$ a residual point. 
Let $K_L:=T_L\cap T^L$, a $N_{W_{2,0}}(L)$-stable 
finite abelian group acting faithfully on $L$, where $N_{W_{2,0}}(L)$ is 
the stabilizer of $L$ in $W_{2,0}$. Then  $L\cap T_L=K_L r$. 
Let $K_L^n:=K_L\cap N_{W_{2,0}}(L)/Z_{W_{2,0}}(L)$ (with $Z_{W_{2,0}}(L)$ 
the pointwise stabilizer
of $L$),  and $\dot{K}^n_L\subset N_{W_{2,0}}(L)$ its  
inverse image in $N_{W_{2,0}}(L)$. 
Then $\dot{K}_L^n= W(R_L)\cap N_{W_{2,0}}(L)=N_{W(R_L)}(L)$, and  
$N_{W_{2,0}}(L)$ acts on $L_n:=L/K^n_L$. 
Clearly $\mu_{\mc{R},m,d}^{(L)}$ is $K_L^n$-invariant, and thus 
descends to a rational function $\mu_{\mc{R},m,d}^{(L_n)}$ on $L_n$.
\begin{defn}\label{defn:spectralTM} 
We first assume that $(\mc{R}_1,m_1)$ is (semi)-standard. 
By a \emph{(semi)-standard spectral transfer map} from $\mc{H}_1$ to $\mc{H}_2$  
we mean a morphism $\phi_T:T_1\to L_n:=L/K^n_L$ over $\mb{L}$, where $L\subset T_2$ 
denotes a residual coset, such that:
\begin{enumerate}
\item[(T1)] $\phi_T$ is finite.
\item[(T2)] $\phi_T(e)\in (T_L\cap L)/K^n_L$, and if we declare $\phi_T(e)$
to be the unit of the $T^L_n:=T^L/K^n_L$-torsor $L_n$ then $\phi_T$ is a homomorphism of 
algebraic tori over $\mb{L}$.
\item[(T3)] There exists an $a\in\mathbb{C}^\times$ such 
that $\phi_T^*(\mu_{\mc{R}_2,m_2,d_2}^{(L_n)})=a\mu_{\mc{R}_1,m_1,d_1}$.
\end{enumerate}
If $(\mc{R}_1,m_1)$ is not semi-standard we impose the
additional condition: 
\begin{enumerate}
\item[(T4)] The $\phi_T$-image of a $W_{1,0}$-orbit is
contained in a single $N_{W_{2,0}}(L)$-orbit.
\end{enumerate}
\end{defn}
\begin{prop}\label{prop:restrans}
A spectral transfer map $\phi_T$ 
has the following property: if $L_1\subset T_1$ is a residual coset with
respect to $(\mc{R}_1,m_1)$ then there exists a residual coset 
$L_2\subset L$ such that $\phi_T(L_1)=K_L^nL_2/K_L^n$. 
In this case we also have 
$\phi_T(L_1^{temp})=\overline{L_2^{temp}}:=K^n_LL_{2}^{temp}/K^n_{L}$. Conversely,
if $L_2\subset L$ is a residual coset then $\phi^{-1}_T(\overline{L_{2,n}})$
(with $\overline{L_{2,n}}:=K^n_LL_{2}/K^n_{L}$) is a finite union of residual cosets of $T_1$ (and analogously
for tempered forms of residual cosets of $T_2$).
\end{prop}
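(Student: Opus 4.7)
The strategy is to exploit condition (T3) of Definition \ref{defn:spectralTM} together with the characterization of residual cosets through the $\mu$-function (Theorem \ref{thm:genrescos}) and the regularization procedure in \S\ref{par:polezero}.

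First I would use (T1) and (T2) to observe that $\phi_T$ is a finite homomorphism of algebraic tori over $\mb{L}$ once $\phi_T(e)$ is taken as unit of $L_n$. Hence for any coset $L_1\subset T_1$ of a subtorus, $\phi_T(L_1)$ is a coset of a subtorus of $L_n$; its preimage under the étale quotient $q:L\to L_n=L/K^n_L$ is a single $K^n_L$-orbit of cosets of $L$, any representative of which I can take as $L_2\subset L$. This already gives $\phi_T(L_1)=K^n_LL_2/K^n_L$.

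Next I would show that $L_2$ is residual for $(\mc{R}_2,m_2)$. By Theorem \ref{thm:genrescos}(i),(ii), residuality of a coset is detected by the equality case of (\ref{eq:ineqgen}), or equivalently by the pole/zero behaviour of $\mu$ along the coset as encoded in (\ref{eq:mureg}). Condition (T3) says $\phi_T^*\mu^{(L_n)}_{\mc{R}_2,m_2,d_2}=a\mu_{\mc{R}_1,m_1,d_1}$ for some $a\in\mathbb{C}^\times$, and using the factorisation (\ref{eq:musplit}) of $\mu^{(L)}$ as the ``Levi'' $\mu$-function for $(\mc{R}_L,m_L)$ on $T^L$ multiplied by an invertible local factor, the same pullback identity passes to the further regularizations along $L_1$ and $\phi_T(L_1)$:
\[
\phi_T|_{L_1}^*\bigl(\mu^{(L_n)}_{\mc{R}_2,m_2,d_2}\bigr)^{(\phi_T(L_1))}
\;=\;a'\,\mu^{(L_1)}_{\mc{R}_1,m_1,d_1}
\]
with $a'\neq 0$. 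Since $\phi_T$ is finite, $\textup{codim}(L_1)=\textup{codim}(\phi_T(L_1))=\textup{codim}(L_2)$; since residuality of $L_1$ in $T_1$ translates, root by root through $\phi_T^*$, into the exact same pole/zero counts for $L_2$ in $T_2$, the inequality (\ref{eq:ineqgen}) becomes equality for $L_2$, so $L_2\in\mc{L}(\mc{R}_2,m_2)$.

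The statement on tempered forms follows because $\phi_T$ is defined over $\mb{L}$: the natural real structures making the compact parts $T^{L_1}_{\mb{v},u}\subset T^{L_1}_\mb{v}$ and $T^{L_2}_{\mb{v},u}\subset T^{L_2}_\mb{v}$ correspond under $\phi_T$, so the generic residual point of $L_1$ maps (up to $K^n_L$) to that of $L_2$, and the unitary part maps to the unitary part, giving $\phi_T(L_1^{temp})=q(L_2^{temp})=\overline{L_2^{temp}}$. Finally the converse direction is simply the previous calculation run backwards: given residual $L_2\subset L$, the subscheme $\phi_T^{-1}(\overline{L_{2,n}})\subset T_1$ has the same dimension as $L_2$ by finiteness of $\phi_T$, decomposes as a finite union of cosets of subtori (again because $\phi_T$ is a finite torus homomorphism), and (T3) applied along each component turns the residuality of $L_2$ into the residuality of that component, with the tempered compatibility coming from the same real-structure argument.

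The main obstacle I anticipate is making the second step fully precise: one has to check that the pullback of the rational function $\bigl(\mu^{(L_n)}_2\bigr)^{(\phi_T(L_1))}$ really equals $\mu^{(L_1)}_1$ up to a nowhere-vanishing factor (not just that their pole/zero divisors match), which requires a careful bookkeeping of how the factorisation (\ref{eq:musplit}) for the ``Levi'' piece along $L$ interacts with a further regularisation along $\phi_T(L_1)\subset L_n$, and a careful verification using Theorem \ref{thm:genrescos}(ii) that the codimension identities transfer correctly through the quotient by the finite group $K^n_L$.
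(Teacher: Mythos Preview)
Your approach is correct and is essentially the same as the paper's, only far more explicit. The paper's proof is two lines: it cites the ``fundamental property'' Theorem~\ref{thm:genrescos}(i) (that for residual cosets the inequality~(\ref{eq:ineqgen}) is an \emph{equality}) for the residual assertion, and invokes (T1) and (T2) for the tempered compatibility. Your elaboration---pulling back pole/zero orders of $\mu$ through (T3), adding the $\textup{codim}(L)$ already absorbed in the regularisation, and matching codimensions via finiteness---is exactly the mechanism the paper leaves implicit.

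One remark: the ``main obstacle'' you flag (proving the refined identity $\phi_T|_{L_1}^*(\mu_2^{(L_n)})^{(\phi_T(L_1))}=a'\mu_1^{(L_1)}$ up to a unit) is stronger than what is needed here. For this proposition only the net pole-order count along $L_1$ and $L_2$ matters, and that transfers directly from (T3) plus Theorem~\ref{thm:genrescos}(i) without tracking the full regularised functions. The sharper identity you are worried about is in fact established later in the paper (see the proof of Theorem~\ref{thm:corr}, equation~(\ref{eq:help})), where it is genuinely required; so you are anticipating a harder step than the present proposition demands. Also, your phrase ``root by root through $\phi_T^*$'' is a bit loose---individual roots of $R_{1,0}$ need not pull back to roots of $R_{2,0}$---but since only the aggregate order matters, this does not affect the argument.
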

\begin{proof}
Clear by the fundamental property Theorem \ref{thm:genrescos}(i)
of residual cosets. The assertion about the tempered forms of the
residual cosets follows easily from (T1) and (T2).
\end{proof}
\begin{prop}\label{prop:ratcon}
The constant $a$ of (T3) is always rational.
\end{prop}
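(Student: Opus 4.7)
The plan is to regularize both sides of the identity $\phi_T^*(\mu^{(L_n)}_{\mc{R}_2,m_2,d_2})=a\,\mu_{\mc{R}_1,m_1,d_1}$ at a generic residual point of $T_1$, expressing $a$ as a ratio of two elements of $\mathbb{Q}(v)^\times$. Since $a\in\mathbb{C}^\times$ is a constant (independent of $v$), this will force $a\in\mathbb{Q}^\times$.

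First I would pick a residual point $r_1\in\textup{Res}(\mc{R}_1,m_1)$; the degenerate case in which no residual point exists can be dealt with separately by direct evaluation at a generic point (where $\mu_1$ has no poles or zeros). By Proposition \ref{prop:restrans} applied to the zero-dimensional residual coset $\{r_1\}$, there exists a residual point $r_2\in L\subset T_2$ of $\mc{H}_2$ such that $\phi_T(r_1)=K_L^n r_2/K_L^n\in L_n$. By Theorem \ref{thm:ratfdeg}(i), we may fix generic discrete series characters $\delta_i\in\Delta(\mc{R}_i,m_i)$ with $gcc(\delta_i)=W_{i,0}r_i$, and by Theorem \ref{thm:ratfdeg}(ii)--(iii),
\[
\mu^{(\{r_i\})}_{\mc{R}_i,m_i,d_i}(r_i)=D_{\delta_i}^{-1}\de(\delta_i)\in\mathbb{Q}^\times\cdot\mb{M}\subset\mathbb{Q}(v)^\times.
\]

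Now regularize both sides of (T3) along $\{r_1\}$. The right hand side regularizes to $a\cdot D_{\delta_1}^{-1}\de(\delta_1)\in a\cdot\mathbb{Q}(v)^\times$. For the left hand side, observe that by (T1) and (T2) the morphism $\phi_T$ is a finite homomorphism of algebraic tori over $\mb{L}$, hence is \'etale in characteristic zero; in particular, \'etale pullback commutes with regularization along residual cosets. This reduces the regularized left hand side to $(\mu^{(L_n)}_{\mc{R}_2,m_2,d_2})^{(\{\phi_T(r_1)\})}(\phi_T(r_1))$. Using the nested regularization along the chain $\{r_2\}\subset L\subset T_2$ together with the factorization (\ref{eq:musplit}), and the finite descent $L\to L_n=L/K_L^n$, this value equals $c\cdot D_{\delta_2}^{-1}\de(\delta_2)$ for some $c\in\mathbb{Q}^\times$. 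Equating both sides yields
\[
a=c\cdot\tfrac{D_{\delta_1}}{D_{\delta_2}}\cdot\tfrac{\de(\delta_2)}{\de(\delta_1)}\in\mathbb{Q}^\times\cdot\mb{M}\cdot\mb{M}^{-1}\subset\mathbb{Q}(v)^\times,
\]
and since $a$ is a constant in $\mathbb{C}^\times$, we conclude $a\in\mathbb{Q}^\times$.

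The main obstacle is controlling the rational constant $c$ produced by the nested regularization and the $K_L^n$-descent. Concretely, one must check that the auxiliary $c$-function factors in (\ref{eq:musplit}) evaluated at the residual point $r_2$, whose character values $\alpha(r_2)$ all have the form $\zeta v^n$ with $\zeta$ a root of unity (from the proof of Theorem \ref{thm:genrescos}(v)), combine into an element of $\mathbb{Q}(v)^\times$. Any apparent root-of-unity dependence must cancel, as is guaranteed by the $\mathbb{Q}(v)$-rationality of the full product $\mu^{(\{r_2\})}$ asserted by Theorem \ref{thm:ratfdeg}(iii).
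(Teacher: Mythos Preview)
Your route via regularization at a residual point is different from the paper's, and while the overall strategy is reasonable, there is a gap at the step where you assert that ``\'etale pullback commutes with regularization along residual cosets''. The regularization $\mu^{(\{r\})}$ in the paper (equation (\ref{eq:mureg})) is defined by deleting specific factors indexed by roots of the ambient root system and evaluating the remaining product; it is \emph{not} an intrinsic operation on rational functions. When you regularize the right side of (T3) at $r_1$ you use the factorization of $\mu_1$ by $c$-functions for roots in $R_{1,0}$, obtaining $a\,\mu_1^{(\{r_1\})}(r_1)$. But the left side $\phi_T^*(\mu_2^{(L_n)})$ carries a different factorization, namely by (restricted and pulled-back) $c$-functions for roots in $R_{2,0}\setminus R_{2,P}$; deleting \emph{those} vanishing factors is what produces, via your nested regularization and the \'etale covering $L\to L_n$, the value $\mu_2^{(\{r_2\})}(r_2)$. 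The two rational functions agree up to the scalar $a$, so their divisors coincide; but the two ``regularized values'' differ by the ratio of the two sets of deleted factors, and you have not shown that this ratio equals $1$ (or even lies in $\mathbb{Q}(v)^\times$). Your appeal to the $\mathbb{Q}(v)$-rationality of $\mu_2^{(\{r_2\})}$ does not close this gap: that rationality concerns one of the two regularized values individually, not the comparison between the two factorizations of the same function.

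The paper's proof sidesteps all of this by a much simpler manoeuvre: divide (T3) through by $d_1$ and specialize the resulting identity at the generic point of the fibre $T_{1,\mb{v}}$ at $\mb{v}=1$. At $\mb{v}=1$ every factor $c_{m,\alpha}$ in (\ref{eq:formc}) becomes identically $1$, so $\mu_{\mc{R}_1,m_1,1}$ specializes to $1$, and via (\ref{eq:musplit}) and Proposition \ref{prop:reg} the function $\mu_{\mc{R}_2,m_2,d_2/d_1}^{(L_n)}$ specializes to a rational constant (it is the ratio of two elements of $\mb{M}_{(k+l)}$, hence lies in $\mb{M}_{(0)}$, which has a well-defined value in $\mathbb{Q}^\times$ at $\mb{v}=1$). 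This yields an equation $c_1=ac_2$ with $c_i\in\mathbb{Q}^\times$, whence $a\in\mathbb{Q}^\times$. No choice of residual point and no appeal to Theorem \ref{thm:ratfdeg} are needed.
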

\begin{proof}
Let $\phi_T(T_1)=L_n$ with $L\subset T_2$ a residual coset.
By Theorem \ref{thm:genrescos}(ii),
equation (T3) is equivalent to
\begin{equation}\label{eq:restr}
\phi_T^*(\mu_{\mc{R}_2,m_2,d_2}^{(L_n)})=a\mu_{\mc{R}_1,m_1,d_1}
\end{equation}
Now consider formula (\ref{eq:musplit}) for $\mu_{\mc{R}_2,m_2,d_2}^{(L)}$.
By \cite[Theorem 3.27(v)]{Opd1} and Proposition \ref{prop:reg}
we have, if $d_2\in\mb{M}_k$ and $l=\textup{codim}(L)$, that
$\mu^{(\{r\})}_{\mc{R}_{P_2},m_{P_2},d_2}\in\mb{M}_{k+l}$.
Obviously (\ref{eq:restr}) implies that $d_1\in\mb{M}_{k+l}$ as well, and
thus after dividing by $d_1$ we can specialize the resulting identity
\begin{equation}\label{eq:restr2}
\phi_T^*(\mu_{\mc{R}_2,m_2,d_2/d_1}^{(L_n)})=a\mu_{\mc{R}_1,m_1,1}
\end{equation}
at the generic point of the fiber $T_{1,\mb{v}}$ of $T_1$ at $\mb{v}=1$.
But from the definition of $\mu_{\mc{R}_1,m_1,1}$ and from
equation (\ref{eq:musplit}) for $\mu_{\mc{R}_2,m_2,d_2/d_1}^{(L)}$ we see that
this yields an equation of constants of the form $c_1=ac_2$
with $c_i\in\mathbb{Q}^\times$, hence the conclusion.
\end{proof}
\begin{prop}\label{prop:z} If $\phi_T$ is a spectral transfer map from 
$(\mc{H}_1,\tau^{d_1})$ to $(\mc{H}_2,\tau^{d_2})$ such that  
$\phi_T(T_1)=L_n$ with $L\subset T_2$ a residual coset. Then
$\forall\,w_1\in W_{1,0}\ \exists\,w_2\in N_{W_{2,0}}(L)$ such that
$\phi_T\circ w_1=w_2\circ \phi_T$.
\end{prop}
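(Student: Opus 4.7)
The plan is to combine the $W_{1,0}$-invariance of $\mu_1$ with property (T3) and then use the mirror-theoretic description of the symmetry group of the $\mu$-function (Proposition \ref{prop:musym}) to pin down the desired $w_2 \in N_{W_{2,0}}(L)$. First I would note that $\mu_1 = \mu_{\mc{R}_1,m_1,d_1}$ is manifestly $W_{1,0}$-invariant, since the product $c_{m_1}c_{m_1}^{w_0}$ appearing in Definition \ref{defn:mu} ranges over $R_{1,0} = R_{1,0,+}\sqcup R_{1,0,-}$, which is $W_{1,0}$-stable. Combined with (T3) this yields, for every $w_1 \in W_{1,0}$,
\[
(\phi_T \circ w_1)^* \mu_{\mc{R}_2,m_2,d_2}^{(L_n)} \;=\; w_1^*(a\mu_1) \;=\; a\mu_1 \;=\; \phi_T^*\mu_{\mc{R}_2,m_2,d_2}^{(L_n)}.
\]
Since $w_1$ acts linearly on $X_1$ and hence fixes $e \in T_1$, both $\phi_T$ and $\phi_T \circ w_1$ send $e$ to $\phi_T(e) \in (T_L \cap L)/K_L^n$; by (T2), once $L_n$ is identified with the torus $T^L_n := T^L/K_L^n$ via this base point, both are finite homomorphisms of algebraic tori over $\mathbf{L}$ from $T_1$ to $T^L_n$ of the same degree.

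Next I would reduce to the case $w_1 = s_\alpha$ for a reflection in a root $\alpha \in R_{1,0}$, since such reflections generate $W_{1,0}$ and the conclusion is closed under composition. Fix $s_\alpha$ and let $M_\alpha \subset T_1$ be the associated $\mu_1$-mirror cut out by whichever of the factors $1 \mp v^{-2m_\pm(\alpha)}\alpha^{-1}$ is non-trivial (the existence of such a factor is guaranteed by semi-standardness, and in general by (T4)). Because $\phi_T$ is finite and surjective onto $L_n$ and $M_\alpha$ has codimension one, $N := \phi_T(M_\alpha)$ is a codimension-one subvariety of $L_n$ along which $\mu_{\mc{R}_2,m_2,d_2}^{(L_n)}$ has a zero or pole, i.e.\ a $\mu_2^{(L_n)}$-mirror. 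Such a mirror is the trace on $L_n$ of a root hyperplane of $\mu_2$ cut out by some $\beta \in R_{2,0}$ non-constant on $L$; the associated Weyl reflection $s_\beta \in W_{2,0}$ preserves that hyperplane, preserves $N$, hence normalizes $L$ and lies in $N_{W_{2,0}}(L)$. Setting $w_2 := s_\beta$, I would check that $\phi_T \circ s_\alpha$ and $w_2 \circ \phi_T$ are two $\mathbf{L}$-homomorphisms $T_1 \to T^L_n$ that coincide on the codimension-one subgroup $M_\alpha$ and act by inversion on the one-dimensional transversal direction to $M_\alpha$, and are therefore equal by the rigidity of homomorphisms of algebraic tori.

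The main obstacle is to ensure that the matching $s_\alpha \mapsto s_\beta$ always takes values in the \emph{Weyl-group} piece $N_{W_{2,0}}(L)$ of the full symmetry group of $\mu_2^{(L_n)}$, never only in the outer part $\textup{Out}_T(\mu)$ of Proposition \ref{prop:musym}(ii),(iv). This is dealt with by noting that outer symmetries correspond to diagram automorphisms of the spectral diagram of $\mc{R}_2$ and not to reflections in root hyperplanes, whereas $s_\alpha$ by construction matches such a reflection. The residual ambiguity of $w_2$ modulo $\dot{K}_L^n$ is harmless because $\dot{K}_L^n \subset N_{W_{2,0}}(L)$. In the non-semi-standard situation, condition (T4) encodes precisely the orbit-wise compatibility between the $W_{1,0}$-action on $T_1$ and the $N_{W_{2,0}}(L)$-action on $L_n$ that is needed to close the argument.
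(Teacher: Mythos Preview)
Your outline has the right shape --- reduce to simple reflections, match $\mu$-mirrors under $\phi_T$, and then produce a matching reflection on the target --- but the step where you pass from a mirror on $L_n$ to an element of $N_{W_{2,0}}(L)$ is a genuine gap. You write that a $\mu_2^{(L_n)}$-mirror is the trace on $L_n$ of a root hyperplane for some $\beta\in R_{2,0}\setminus R_{2,P}$, and then set $w_2:=s_\beta$, asserting that $s_\beta$ ``preserves $N$, hence normalizes $L$''. This implication fails in general: the reflection $s_\beta$ fixes its own hyperplane in $T_2$, but there is no reason for it to stabilize the residual coset $L=rT^P$ (for generic $\beta\notin R_{2,P}$ it will not), let alone to restrict to the reflection in $N$ on $L_n$. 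Consequently the subsequent rigidity argument (``coincide on $M_\alpha$ and invert the transversal'') never gets off the ground, because you have not exhibited any affine involution of $L_n$ with fixed locus $N$ coming from $W_{2,0}$.

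This is exactly the point where the paper invokes nontrivial input from harmonic analysis. The paper identifies $L^{temp}$ with the component $\Xi_{(P,\delta)}^{temp}$ of the tempered induction data via $j_L$, interprets the mirrors as the zero set of the $c$-function factor in the Plancherel density (\ref{eq:transSigma}), and then appeals to the Harish--Chandra/Knapp--Stein type result \cite[Theorem 4.3]{DeOp2}: for each $\xi$, the restricted roots whose mirrors pass through $\xi$ form a root system $\mathfrak{R}_\xi$ and every element of $W(\mathfrak{R}_\xi)$ is realized inside the isotropy group $\mc{W}_\xi$ of the groupoid $\mc{W}_\Xi$. Unpacking such an element as $(k,n)\in K_P\times W_{2,0}$ with $n(P)=P$ and $(\delta^n)_{k^{-1}}\simeq\delta$, one obtains $w_2=w^{-1}n\in N_{W_{2,0}}(L)$ (with $w\in W(R_{2,P})$) that acts on $L$ as the desired mirror reflection. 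In other words, the element $w_2$ is \emph{not} the naive $s_\beta$, and its existence is the substance of the proposition. Your appeal to Proposition~\ref{prop:musym} about $\textup{Out}_{T_2}(\mu_2)$ does not help here, since that result concerns symmetries of $\mu_2$ on the full torus $T_2$, not of the regularization $\mu_2^{(L_n)}$ on the subquotient $L_n$.
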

\begin{proof}
In the non-semi-standard case there is nothing to prove (by (T4)), so let us
assume that we are in the semi-standard case.
We may assume that $T^L=T^P$ for some standard parabolic subset
$P\subset F_{2,0}$. Choose $\delta\in\Delta_P$
such that $S_{(P,\delta)}=N_{W_{2,0}}(L)\backslash L^{temp}$.
Recall from Theorem \ref{thm:plancherel} that the Plancherel measure
on $S_{(P,\delta)}$ is given by the invariant smooth density function
(with respect to the normalized $T^P_u$-invariant measures $d^L(t)$)
$m_L$ on $L^{temp}$. Let us denote by $m_\Xi$ the lift of $m_L$ to
$\Xi_{(P,\delta)}^{temp}$ via the isomorphism $j_L:\Xi_{(P,\delta)}
\to L$ given by $j_L((P,\delta,t))=rt$,
where we have chosen a generic residual point $r\in T_P$
such that $W_Pr$ is the generic central character of $\delta$.
Then the rational function $c(\xi)$ on $\Xi_{(P,\delta)}$ defined
by
\begin{equation}\label{dfn:c}
c(\xi):=\prod_{\alpha\in R_{0,+}\backslash R_{P,+}}c_\alpha(j_L(\xi))
\end{equation}
is independent of the choice of $r$ as above. We have
(see \cite[Theorem 4.3, Proposition 9.8]{DeOp1})
(compare to (\ref{eq:musplit}) and Theorem \ref{thm:ratfdeg}):
\begin{equation}\label{eq:transSigma}
m_\Xi(\xi)=v^{-2m_{W_0}(w^P)}|\mathcal{W}_P/\mathcal{K}_P|^{-1}\de(\delta)(c(\xi)c(w^P(\xi)))^{-1}
\end{equation}
where $w^P=w_0 w_P^{-1}\in W^P$ is the longest element.

Recall that
$c(\xi)^{-1}$ is smooth on $\Xi_{(P,\delta)}^{temp}$ and
$c(w^P(\xi))=\overline{c(\xi)}$ on $\Xi_{(P,\delta)}^{temp}$
(by \cite[Proposition 9.8]{DeOp1}).
Hence the zero set of
$m_\Xi$ on $\Xi_{(P,\delta)}^{temp}$ is the same as the
zero set of $c(\xi)^{-1}$ on $\Xi_{(P,\delta)}^{temp}$.
This zero set is a union of connected components of
certain hypersurfaces of the form $(P,\alpha)(\xi)=\mathrm{constant}$,
where $\alpha\in R_{2,0}\backslash R_{2,P}$. These hypersurfaces are called
mirrors on $\Xi_{(P,\delta)}^{temp}$. By a classical result of Harish-Chandra
(see \cite[Theorem 4.3]{DeOp2} in the present context), for any
$\xi\in\Xi_{(P,\delta)}^{temp}$, the set of restricted roots $(P,\alpha)$
such that $\xi$ belongs to a $(P,\alpha)$-mirror forms a root system
$\mathfrak{R}_\xi$, and all elements of $W(\mathfrak{R}_\xi)$ belong
to the isotropy group $\mc{W}_\xi$ of $\xi$ for the action of $\mc{W}$.

Let $L_1=T_1$ (which is a residual coset of $T_1$) and let
$(P_1,\delta_1)$ be the unique pair $P_1=F_{1,0}$, $\delta_1=1$ associated to $L_1$.
We will identify $\Xi_{(P_1,\delta_1)}$ and $T_1$ via $j_{L_1}$.
The assumption on $m_\pm(\alpha)$ implies that there exists a point
$o_1\in T_{1,u}=L_1^{temp}$
such that for all $\alpha\in R_{1,0}$ there exists a
$(P_1,\alpha)$-mirror containing $o_1$. In particular, $o_1$ is
$W_1$-invariant, and the action of $W_1$ on $T_1^{temp}$ is
generated by the mirror reflections in the mirrors through
$o_1$ (see Proposition \ref{prop:musym}).
Choose $\xi_o\in\Xi_{(P,\delta)}^{temp}$ such that $K_L^nj_L(\xi_o)=\phi_T(o_1)$.
Observe that (T3) and (\ref{dfn:c}) imply that for each mirror
$M_{(P_1,\alpha)}\subset L_1^{temp}=\Xi^{temp}_{(P_1,\delta_1)}$ there exists
a mirror $M_{(P,\beta)}\subset \Xi_{(P,\delta)}^{temp}$ through $\xi_o$
such that $\phi_T(M_{(P_1,\alpha)})=K_L^nj_L(M_{(P,\beta)})$.
Let $\mf{r}_{(P,\beta)}\in W(\mathfrak{R}_\xi)$ be the reflection in $M_{(P,\beta)}$.
Then (T1) and (T2) imply that we can choose an open $r_\alpha$-invariant 
neighborhood $V\subset T_1$ of $o_1$ such that $\phi_T$ restricts 
to an isomorphism on $V$, and a  $\mf{r}_{(P,\beta)}$-invariant
open neighborhood $U\ni\xi_o$ such that the covering $\pi:L\to L_n$
restricts to an isomorphism on $j_L(U)$ and such that $\pi(j_L(U))=\phi_T(V)$, 
and such that
$\mf{r}_{(P,\beta)}|_U=(\phi_T^{-1}\circ\pi\circ j_L)^{-1}\circ r_\alpha\circ(\phi_T^{-1}\circ\pi\circ j_L)|_U$.
By the previous paragraph $\mf{r}_{(P,\beta)}\in \mc{W}$. This means by definition
that there exists a $k\in K_P$ and a $n\in W_{2,0}$ with the following properties: $n(P)=P$,
$(\delta^n)_{k^{-1}}\simeq\delta$ and $\mf{r}_{(P,\delta)}(\xi)=(k\times n)(\xi)$ for all
$\xi=(P,\delta,t)\in \Xi_{(P,\delta)}$. In other words, we have for all $t\in T^P$ that
$j_L(\mf{r}_{(P,\beta)}(\xi))=rkn(t)$. Since $(\delta^n)_{k^{-1}}\simeq\delta$ we have
$wr=k^{-1}n(r)$ for some $w\in W(R_{2,P})$. Consequently
$w(j_L(\mf{r}_{(P,\beta)}(\xi)))=n(rt)=n(j_L(\xi))$, and we see
that the action of $\mf{r}_{(P,\beta)}$ on $\Xi_{(P,\delta)}$ is given by
the Weyl group element $w^{-1}n\in N_{W_{2,0}}(L)$ via the isomorphism $j_L$.
In view of the above we obtain that
$\phi_T\circ r_\alpha=w^{-1}n\circ\phi_T$ with $w^{-1}n\in N_{W_{2,0}}(L)$. Since 
we can do this construction for any $\alpha\in R_{1,0}$ the claim is proved.
\end{proof}
\begin{cor}\label{cor:z}
A spectral transfer map $\phi_T$ from 
$\mc{H}_1$ to $\mc{H}_2$ induces a morphism
\begin{align}
\phi_Z:W_{1,0}\backslash T_1(\mathbb{C})&\to W_{2,0}\backslash T_2(\mathbb{C})\\
\nonumber W_{1,0}(t)&\to W_{2,0}(\phi_T(t))
\end{align}
We have $\phi_Z(S_1)\subset S_2$, and
the restriction $\phi_Z^{temp}$ of $\phi_Z$ to $S_1$ is a
finite map sending components to components.
\end{cor}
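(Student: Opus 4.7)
\textbf{Proof plan for Corollary \ref{cor:z}.} The plan is to assemble this corollary directly from Proposition \ref{prop:z} (which provides the Weyl-group equivariance of $\phi_T$) and Proposition \ref{prop:restrans} together with Theorem \ref{thm:S} (which describe how $\phi_T$ interacts with residual cosets and hence with the components of $S_1$ and $S_2$). Essentially all the geometric content has already been packaged into those results; the corollary is a matter of translating them into the language of $W_0$-orbit spaces.

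First I would check that $\phi_Z$ is well defined. For $t\in T_1(\mathbb{C})$ and $w_1\in W_{1,0}$, Proposition \ref{prop:z} yields $w_2\in N_{W_{2,0}}(L)\subset W_{2,0}$ with $\phi_T(w_1t)=w_2(\phi_T(t))$ in $L_n=L/K^n_L$. Since by construction $K^n_L\subset W(R_L)\subset W_{2,0}$, any lift of $\phi_T(t)$ from $L_n$ back to $L\subset T_2(\mathbb{C})$ yields the same $W_{2,0}$-orbit, so the prescription $W_{1,0}(t)\mapsto W_{2,0}(\phi_T(t))$ descends to a well-defined morphism of affine $\mathbb{C}$-varieties $\phi_Z\colon W_{1,0}\backslash T_1(\mathbb{C})\to W_{2,0}\backslash T_2(\mathbb{C})$.

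Next, for both $\phi_Z(S_1)\subset S_2$ and the claim that components are mapped to components, I would combine Theorem \ref{thm:S} with Proposition \ref{prop:restrans}. By Theorem \ref{thm:S} each component of $S_i$ has the form $W_{i,0}\backslash W_{i,0}L_i^{temp}$ for some residual coset $L_i\subset T_i$ over $\mb{L}$. Given such an $L_1\subset T_1$, Proposition \ref{prop:restrans} produces a residual coset $L_2\subset L$ with $\phi_T(L_1^{temp})=K^n_LL_2^{temp}/K^n_L$; lifting back to $T_2(\mathbb{C})$ and then taking $W_{2,0}$-orbits (which absorb the $K^n_L$-action) gives $\phi_Z(W_{1,0}L_1^{temp})\subset W_{2,0}L_2^{temp}$, a subset of a single component of $S_2$. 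Taking the union over $W_{1,0}$-orbits of residual cosets $L_1$ then yields $\phi_Z(S_1)\subset S_2$.

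Finally, finiteness of $\phi_Z^{temp}$ follows from (T1): $\phi_T$ is a finite morphism, so its restriction to each $L_1^{temp}$ has finite fibers, and passing to the quotients by the finite actions of $N_{W_{i,0}}(L_i)$ on $L_i^{temp}$ (Proposition \ref{prop:S}(iii)) preserves this. Since $\mc{L}$ is finite by Theorem \ref{thm:genrescos}(iii), $S_1$ has only finitely many components and $\phi_Z^{temp}$ is finite on each of them. I do not anticipate a serious obstacle: the one subtle point is that $\phi_T$ takes values in $L_n$ rather than in $L$, but because $K^n_L\subset W_{2,0}$ the lifting ambiguity disappears once one takes $W_{2,0}$-orbits.
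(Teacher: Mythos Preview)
Your proposal is correct and follows essentially the same route as the paper: the paper's proof also derives the well-definedness of $\phi_Z$ from Proposition~\ref{prop:z} together with the fact that the $K_L^n$-ambiguity is absorbed by $W_{2,0}$, invokes (T1) for finiteness, and cites Proposition~\ref{prop:restrans} together with Proposition~\ref{prop:S} (your Theorem~\ref{thm:S} serves the same purpose) for the statement about components. One small sharpening: Proposition~\ref{prop:restrans} gives the equality $\phi_T(L_1^{temp})=\overline{L_2^{temp}}$, so you in fact get that $\phi_Z^{temp}$ maps each component of $S_1$ \emph{onto} a component of $S_2$, not merely into one.
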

\begin{proof}
Let $L\subset T_2$ be a residual coset such that $\phi_T(T_1)=L^n=L/K_L^n$.
The existence of $\phi_Z$ follows from Proposition \ref{prop:z} and 
the fact that $K_L^n\subset W_{2,0}(L):=N_{W_{2,0}}(L)/Z_{W_{2,0}}(L)$.
This map is finite by (T1).
By Proposition \ref{prop:S} and Proposition \ref{prop:restrans} we
see that $\phi_Z$ maps components of $S_1$ onto components of $S_2$.
\end{proof}
\subsubsection{Reduction to irreducible types in the general semisimple case}\label{par:products}
Suppose that $\phi_T$ is a spectral transfer map from $\mc{H}_1$ to $\mc{H}$,
with $\mc{H}=\mc{H}(\mc{R},m)$ and $\mc{H}_1=\mc{H}(\mc{R}_1,m_1)$ both 
semi-simple affine Hecke algebras.   
We will show in this paragraph that we can essentially reduce to a product of 
situations where $\mc{R}$ is irreducible.
Moreover we will see below (cf. Proposition \ref{prop:excel}(3)), 
that if $\mc{H}(\mc{R},m)$ is semi-standard and irreducible,  
then $\mc{R}_1$ is forced to be irreducible too. 
Hence if $\mc{R}$ is semi standard it will follow that modulo covering maps, a spectral transfer map
from $\mc{H}_1$ to $\mc{H}$ is 
always a fibered product (over $\textup{Spec}(\mathbf{L})$) of spectral transfer maps between the simple factors, 
provided we normalize the simple factors of $\mc{H}_1$ and of $\mc{H}$
in a coherent way. 

If $\mc{H}_1$ has positive rank, consider the covering maps (\ref{eq:ominv}). 
If we precompose $\phi_T$ with the covering 
map $\textup{Hom}(\mathbb{Z}\times P((R_1)_m),\mathbb{C}^\times)\to T_1$, it 
is obvious from the definition that we obtain a spectral transfer map from 
$\mc{H}(\mc{R}^m_1,m)$ to $\mc{H}$, with the same image.
Hence we may and will assume from now on that the normalized affine Hecke algebra $\mc{H}_1$ 
is a tensor product (over $\mathbf{L}$) 
of normalized affine Hecke algebras $\mc{H}_1^j$, each of irreducible type (hence of positive rank) or of rank $0$  
(in which case $\mc{H}_1^j\simeq\mathbf{L}$), where 
the trace of $\mc{H}$ is the tensor product of the traces of the $\mc{H}_1^j$. 
Let 
$T_1=T_1^1\times_{\textup{Spec}(\mathbf{L})}T_1^2\times_{\textup{Spec}(\mathbf{L})}\dots \times_{\textup{Spec}(\mathbf{L})} T_1^{l_1}$ be 
the corresponding fibered product decomposition of $T_1$. 
From now on we will suppress $\textup{Spec}(\mathbf{L})$, but all products are tacitly assumed to be 
fibered products over $\textup{Spec}(\mathbf{L})$. 
The $\mu$-function $\mu_1$ of $\mc{H}_1$ 
is the product of the $\mu$-functions $\mu_1^j$ of $\mc{H}_1^i$. 
We embed the factors $T_1^j$ in $T_1$ in the canonical way, and identify the image with $T_1^j$. 
Observe that the number of factors $\mc{H}_1^j$ of a certain irreducible type is determined by $\mc{H}_1$, 
but the number of factors $\mc{H}_1$ of rank $0$ is arbitrary. In the discussion below we will use this freedom 
to adapt the tensor decomposition of $\mc{H}_1$ to the map $\phi_T$.

If the rank of $\mc{H}$ is zero then the rank of $\mc{H}_1$ has to be zero too, and there is nothing to prove.
Hence assume that the rank of $\mc{H}$ is positive. Let  
$L\subset T$ be the residual coset such that $L_n=\textup{Im}(\phi_T)$, and write $L=rT^P$ with 
$T_P\subset T$. Using the action of $W_0$ we may and will assume that there exists a subset 
$P\subset F_0$ such that $T^P$ is 
the maximal subtorus on which the roots in $P$ all vanish. Let $R_P\subset R_m$ denote the corresponding 
``standard parabolic subsystem" of $R_m$.  Assume that $R_0$ is a disjoint union of irreducible parabolic 
subsystems 
$R_0=R_0^1\cup R_0^2\cdots\cup R_0^l$. 
Then, with $K:=\textup{Hom}(X/Q(R_0),\mathbb{C}^\times)$, 
we have $\overline{T}:=T/K=\overline{T^1}\times\dots\times\overline{T^l}$, with each factor a torus over $\mathbf{L}$
of positive rank.  
Moreover $L=(L^1\times L^2\times\cdots\times L^l)/K'$, where $L^i$ is a connected component 
in the inverse image in $T$ of a residual coset 
$\overline{L^i}=\overline{r^i}\overline{T^{P^i}}\subset \overline{T^i}$ with respect 
to the irreducible root datum 
$\mathcal{R}_0^i:=(Q(R^i_0),R^i_0,P((R^i_0)^\vee), R^i_0, F_0^i)$, and $K'$ the kernel of 
the product map (on the product (over $\mb{L}$) of the underlying subtori $T^{P^i}$). 
It follows easily from the definitions that  $L_n=\prod_i(L^i_n)/K'$,
$\overline{L}_n=\prod_i (\overline{L^i})_n$ and that $(\overline{L^i})_n$ is a quotient of $L^i_n$.  
Let $\pi^i$ denote the projection from $L_n$ to $(\overline{L^i})_n$.
Composing $\phi_T$ with the quotient map $L_n\to\overline{L}_n$ we obtain a
spectral transfer map $\overline{\psi_T}$ from 
$\mc{H}_1=\mc{H}(\mc{R}_1^m,m_1)$ to $\mc{H}(\mc{R}_0,m)$ (cf. (\ref{eq:ominv})). 
Proposition \ref{prop:z} implies that for each $j\in\{1,\cdots,l_1\}$ 
there exists a \emph{unique} $i\in\{1,\cdots,l\}$ such that $\pi^i\circ\overline{\psi}_T|_{T_1^j}$ 
is nonconstant. Clearly $i\in\{1,\dots,l\}$ does not  occur iff $L^i$ has dimension zero, i.e. 
is a residual point $r^i:\textup{Spec}(\mb{L})\to T^i$ over $\mb{L}$. 
For each such $i$, we formally introduce an additional rank zero tensor 
factor $\mc{H}^{j(i)}_1\approx\mb{L}$ of $\mc{H}_1$, normalized such that the corresponding map 
$\phi(i):=\overline{r^i}:\mc{H}^{j(i)}_1\leadsto \overline{\mc{H}^i}:=\mc{H}(\mathcal{R}_0^i,m^i)$ 
is a spectral transfer map of rank $0$ (cf. Proposition \ref{prop:rk0}). 
With these rank zero tensor factors added, we can partition the set $\{1,...,l_1\}$ into disjoint nonempty 
subsets $\Pi(i)$ such that  for each $i$, with $T_1(i):=\prod_{j\in\Pi(i)}T_1^j\subset T_1$,  
the map $\phi(i):=\pi^i\circ\overline{\psi}_T|_{T_1(i)}$ is a spectral transfer map from $\otimes_{j\in\Pi(i)}\mc{H}_1^j$ 
(tensor products over $\mathbf{L}$)
to $\overline{\mc{H}^i}:=\mc{H}(\mathcal{R}_0^i,m^i)$. 
\subsubsection{Further properties of spectral transfer maps}\label{par:neccond}
The proof of Proposition \ref{prop:z} can be refined to yield important additional 
information about the image of a spectral transfer map. Suppose that $\phi_T$ is a spectral transfer 
map from $\mc{H}_1$ to $\mc{H}$,
with $\mc{H}=\mc{H}(\mc{R},m)$ and $\mc{H}_1=\mc{H}(\mc{R}_1,m_1)$ both 
semi-simple affine Hecke algebras.

By paragraph \ref{par:products} we may assume that $\mc{H}$ is of irreducible type.
The dual affine Weyl group $W^\vee=W(R_m)\rtimes Y$ (cf. (\ref{eq:dualWeyl})) naturally acts on 
$\mf{t}:=\mathbf{R}\otimes_\mathbf{Z}Y\approx V$ 
via translations over 
$Y$ (a lattice which contains the lattice $Q(R_m^\vee)$) and the Weyl group $W(R_m)$ (or equivalently  $W(R_0^\vee)$
or $W_0$). 
In particular,  the affine Weyl group $W(R_m^{(1)})$ is a subgroup of $W^\vee$ of finite index.  Let us consider 
the $W(R_0^\vee)$-equivariant covering $\mf{t}\to T_u$ whose group of deck transformations is $Y$. 
We choose a fundamental 
dual alcove $C^\vee\subset \mf{t}$. It is a fundamental domain for the action of the normal subgroup
$W(R_m^{(1)})$ of $W^\vee$. Recall that $\mc{R}^m$ is a based root datum, with base $F_{m,0}$. 
Let $\mc{F}^m$ be the corresponding base for the affine root system $R_m^{(1)}$. 
It consists of the base $F_{m,0}$ together with the affine root $a_0^\vee:=1-\psi$, 
where $\psi$ denotes the highest root of $R_m$.

\emph{We will assume until 
Corollary \ref{cor:corresp} that $P\subset F_{m,0}$ is a proper subset }.
Then $r\in T_P\cap L$ is a residual point in $T_P$ for the semisimple quotient affine Hecke 
algebra $\mc{H}_P$ of the ``Levi subalgebra" $\mc{H}^P\subset \mc{H}$) of positive rank.  Let $r=sc$ be the 
polar decomposition of $r\in T_P$. Then $s\in T_{P,u}$ and $c(\mb{v})\in T_{P,\mb{v}}$ for all $\mb{v}>0$, 
and $c$ is itself a residual point for the subsystem $R_{P,s}=\{\alpha\in R_P\mid \alpha(s)=1\}$
such that $c(1)=e$. In particular it follows that $R_{P,s}\subset R_P$ is a maximal rank root subsystem.
It follows easily that a lift $\mf{l}(1)\subset \mf{t}$ of $L(1)^{temp}=sT^P_u\subset T_u$ 
in $\mf{t}$ with respect to the covering $\mf{t}\to T_u$ is conjugate under $W(R_m^{(1)})$
to a unique affine subspace $\mf{t}^J$ which is generated by a facet $C^{\vee,J}$ of $C^\vee$. 
Here $J\subset \mc{F}^m$ is a subset whose complement contains at least two elements, 
and is uniquely determined by $L$. Let $R_J\subset R_m^{(1)}$ 
be the subset of dual affine roots which vanish on $\mf{t}^J$.  
We may and will assume from now that we have moved $L$ by an appropriate element of $W(R_m)$ so 
that there exists a lifting 
$\mf{l}(1)\subset \mf{t}$ of $L(1)^{temp}$ of the form $\mf{l}(1)=\mf{t}^J$ for a (proper) subset $J\subset \mc{F}^m$.
Notice that $J\subset F_{m,0}$ if and only if $s=1$. In this case we call $0\in \mf{t}^J$ the \emph{origin} of $\mf{t}^J$.
Observe that in this situation $J=P$ is a standard parabolic subset, and therefore $R_J=R_P$ is a 
standard parabolic subsystem.

Let us now assume that $s\not =1$. Then $a_0^\vee\in J$. Let $R_P\subset R_m$
be the parabolic subset of roots which are \emph{constant} on $\mf{t}^J$, and let $R_{P,+}=R_P\cap R_{m,+}$.
Obviously $J_0:=J\backslash\{a_0^\vee\}\subset F_{m,0}$. Let $\beta\in R_{P,+}$ be the unique positive root 
such that $J_0\cup\{\beta\}$ forms a basis of $R_{P,+}$. Observe that the gradient projection $D$ maps $R_J$ 
isomorphically to a root subsystem of maximal rank obtained from omitting the 
simple $R_{P,+}$-basis element $\beta$ from the basis of the affine extension $R_P^{(1)}$ with 
affine basis $J\cup\{\beta\}$. 

Consider the affine isomorphism 
between $L^{temp}_n=scT^P_u/K_L^n$ and $L_n(1)^{temp}:=sT^P_u/K_L^n$ given by multiplication 
with $c^{-1}$. Using this we can choose an  
affine linear isomorphism $D^a\phi_T$ from $\mf{t}_1$ to $\mf{t}^J$ which lifts the finite 
affine morphism  $\phi_T:T_{1,u}=L_1^{temp}\to L^{temp}_n$.   Consider inside the Lie algebra 
$\mf{t}_1$ of $L^{temp}_1=T_{1,u}$ the affine hyperplanes with respect the action of dual affine 
Weyl group $W^{\vee,a}_1:=W(R_{m,1}^\vee)\rtimes Q(R_{m,1}^\vee)$ on $\mf{t}_1$.
Analogous to what was said in 
in the proof of Proposition \ref{prop:z}, the images under $D^a\phi_T$ of the affine reflection hyperplanes 
in $\mf{t}_1$ are the intersections of $\mf{t}^J$ 
with the affine hyperplanes of $\mc{R}^m$ on $\mf{t}$ which are not in $\mathbb{Z}J$.
In the semi-standard case this is a direct consequence of the way we choose $L$ and the 
fact that $\phi_T$ is a spectral transfer map with image $L$. In the remaining case
this statement follows easily from (T4).
As a consequence we may and will choose $D^a\phi_T$ such that the fundamental 
alcove  $C^\vee_1$ of $\mf{t}_1$ with respect to $\mc{F}_1^m$  
is mapped to $C^{\vee,J}$ and this fixes $D^a\phi_T$ uniquely.
Let $D\phi_T:\mf{t}_1\to\mf{t}^J_0$ be the 
gradient of $D^a\phi_T$, with $\mf{t}^J_0$ is the linear subspace in $\mf{t}$ parallel to 
$\mf{t}^J$. 
\begin{prop}\label{prop:excel} Assume $\mc{H}$ is irreducible and semi-standard.
Consider a spectral transfer map 
$\phi_T$ from $\mc{H}_1$ to $\mc{H}$ with image $L_n=rT^P_n$, with $L$ a residual coset of positive dimension 
in the position as described in the text above. Let $r=sc$ and let $\mf{t}^J$ be a lift of $L(1)^{temp}$. 
Then $J\subset \mc{F}^m$ is a subset whose complement has at least two elements.
\begin{enumerate}
\item[(1)] $J\subset \mc{F}^m$ is \emph{excellent} 
in the sense of \cite[paragraph 2.28]{Lu4}. 
\item[(2)]  For each affine root $a^\vee\in \mc{R}_{m,1}$, the affine linear automorphism  
$r^*_{a^\vee}:=D\phi_T\circ r_{a^\vee} \circ D\phi_T^{-1}$ of $\mf{t}^J$  
belongs to $N_{W^{\vee}}(W_J)$. 
\item[(3)]  The elements $r^*_{a^\vee}$, where $a^\vee$ runs over 
a basis of simple reflections of $W^{\vee,a}_1$ with respect to $C^\vee_1$, generate 
a subgroup $W^*$ of $N_{W^{\vee}}(W_J)$ isomorphic to $W^{\vee,a}_1$ which acts  
as an affine reflection group on $\mf{t}^J$. Hence $D^a\phi_T$ defines an 
isomorphism of affine reflection groups. In particular $\mc{R}_1$ is irreducible. 
We have $N_{W^\vee}(W_J)=W^*.W_J$
\item[(4)]  The element $D^a\phi_T(0)\in C^{\vee,J}\subset \mf{t}^J$  is a special vertex 
of the alcove $C^{\vee,J}$ for the action of $W^*$.  Since $C^{\vee,J}$ is a face of $C^\vee$, we 
have $D^a\phi_T(0)=\omega_i\in C^\vee$, a vertex of $C^\vee$ corresponding to 
an affine root $a^\vee_{k_0}\in\mc{F}^m\backslash J$. Observe that $s=1$ 
if and only if $\omega_{k_0}$ is a (special) vertex in $C^\vee$ which is mapped to $1\in T$
by the covering map $\mf{t}\to T$. 
\end{enumerate}
\end{prop}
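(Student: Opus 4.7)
My plan is to extract (2), (3) and (4) jointly from the geometric consequences of the spectral transfer axioms (T1)--(T3), and then to recognise (1) as Lusztig's definition of excellence applied directly to the conclusion of (3). The central mechanism is the ``mirror matching'' already exploited in the proof of Proposition \ref{prop:z}: since (T3) forces $\phi_T^*\mu^{(L_n)}=a\mu_1$ up to a nonzero constant, the zero divisor of $\mu_1$ on $T_{1,u}$ matches exactly the restriction to $L^{temp}_n$ of the zero divisor of $\mu$. Lifting via exponentials, this means $D^a\phi_T$ carries each affine reflection hyperplane of $R_{m,1}^{(1)}$ in $\mf{t}_1$ to the intersection of $\mf{t}^J$ with some affine reflection hyperplane of $\mc{R}^m$ not contained in $\mathbb{Z}J$, and conversely every such intersection arises in this way.

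\textbf{Steps for (2) and (3).} For (2), fix $a^\vee\in R_{m,1}^{(1)}$ with reflection hyperplane $M_1\subset\mf{t}_1$. Mirror matching yields $D^a\phi_T(M_1)=\mf{t}^J\cap H$ for an affine hyperplane $H\subset\mf{t}$ cut out by a dual affine root of $\mc{R}^m$ not in $\mathbb{Z}J$, and the reflection $s_H\in W^\vee$ stabilises $\mf{t}^J$ setwise (it fixes a codimension-one affine subspace of it). Since the setwise stabiliser of $\mf{t}^J$ in $W^\vee$ is $N_{W^\vee}(W_J)$, with $W_J$ the pointwise stabiliser, and since $s_H$ and $r^*_{a^\vee}$ induce the same transformation of $\mf{t}^J$, we obtain $r^*_{a^\vee}\in N_{W^\vee}(W_J)$. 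For (3), applying (2) to a set of simple affine reflections generating $W^{\vee,a}_1$ and conjugating by the affine isomorphism $D^a\phi_T$ produces an injection $W^{\vee,a}_1\hookrightarrow N_{W^\vee}(W_J)/W_J$ with image $W^*$, acting on $\mf{t}^J$ as an affine reflection group with fundamental alcove $C^{\vee,J}=D^a\phi_T(C^\vee_1)$. The reverse inclusion $N_{W^\vee}(W_J)/W_J\subseteq W^*$ comes from the converse direction of the mirror matching: every reflection hyperplane of $\mc{R}^m$ meeting $\mf{t}^J$ in codimension one is the $D^a\phi_T$-image of a reflection hyperplane of $R_{m,1}^{(1)}$, so $W^*$ exhausts the reflections in $N_{W^\vee}(W_J)/W_J$. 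This yields $N_{W^\vee}(W_J)=W^*\cdot W_J$.

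\textbf{Irreducibility, (4) and (1).} If $R_{m,1}$ were reducible, $W^*$ would act reducibly on $\mf{t}^J_0$, forcing $C^{\vee,J}$ to split as a product of two lower-dimensional simplices; but $C^{\vee,J}$ is a facet of an alcove of the \emph{irreducible} affine Coxeter system $\mc{F}^m$, and the classification of such facets rules this out, giving the irreducibility of $\mc{R}_1$ claimed in (3). For (4), the origin $0\in\mf{t}_1$ is the unique vertex of $C^\vee_1$ fixed by the entire linear part $W(R_{m,1})$, so $D^a\phi_T(0)\in C^{\vee,J}$ is the unique vertex fixed by the linear part of $W^*$, and is therefore a special vertex of $C^{\vee,J}$; as a vertex of the facet $C^{\vee,J}$ of $C^\vee$, it must equal $\omega_{k_0}$ for a unique $a^\vee_{k_0}\in\mc{F}^m\setminus J$, and the criterion for $s=1$ reads off directly from the chosen lift of $L(1)^{temp}$. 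Finally, (1) is a direct translation: the factorisation $N_{W^\vee}(W_J)=W^*\cdot W_J$ with $W^*$ an affine Weyl group is precisely the excellence condition of \cite[2.28]{Lu4}.

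\textbf{Main obstacle.} The most delicate point I foresee is the inclusion $N_{W^\vee}(W_J)/W_J\subseteq W^*$ in (3), i.e.\ the claim that \emph{every} affine reflection hyperplane of $\mc{R}^m$ meeting $\mf{t}^J$ in codimension one arises from a reflection hyperplane of $R_{m,1}^{(1)}$ via $D^a\phi_T$. This is a bijective matching of the two affine reflection systems and seems to require the joint force of (T1) and (T3): (T3) alone only gives pullback of divisors up to a constant, while the finiteness in (T1) is needed to rule out extra reflections that would remain unmatched. A secondary but non-negligible point is the combinatorial step ruling out a product decomposition of the facet $C^{\vee,J}$ inside an irreducible affine alcove, which should be handled by inspection of the classification of irreducible affine Dynkin diagrams but warrants careful case analysis.
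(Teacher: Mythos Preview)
Your overall strategy---mirror matching via (T3), lifting to the exponential picture, and identifying the induced reflections on $\mf{t}^J$---is the same as the paper's. The difference is in logical order and in one key identification that you are missing.

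The paper does not attempt to prove $N_{W^\vee}(W_J)=W^*\cdot W_J$ or the irreducibility of $\mc{R}_1$ directly. Instead, for each wall $M_{(P,\beta)}$ of $C^{\vee,J}$ (equivalently, each $b^\vee\in\mc{F}^m\setminus J$) it identifies the induced reflection $r^*$ on $\mf{t}^J$ \emph{explicitly} as $w_{J\cup\{b^\vee\}}w_J$, the product of longest elements in the finite parabolic subgroups $W_{J\cup\{b^\vee\}}$ and $W_J$. This identification is the heart of the argument: it immediately gives $r^*(J)=J$, which is precisely Lusztig's definition of excellence in \cite[2.28]{Lu4}. Once $J$ is known to be excellent, the normalizer formula $N_{W^\vee}(W_J)=W^*\cdot W_J$ is quoted from \cite[Proposition 2.26]{Lu4}, and the irreducibility of $W^{\vee,a}_1$ from \cite[2.28(a)]{Lu4}. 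So (1) comes first and (3) is a citation, not the other way around.

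Your route reverses this, and the reversal creates exactly the gap you flagged as the ``main obstacle''. Your argument for $N_{W^\vee}(W_J)/W_J\subseteq W^*$ shows only that $W^*$ contains every reflection in the quotient; it does not show the quotient is \emph{generated} by those reflections. That generation statement is not automatic for the normalizer of a parabolic in an affine Weyl group---it is precisely the content of Lusztig's \cite[Proposition 2.26]{Lu4}, whose hypothesis is that $J$ be excellent. So you are invoking a result whose hypothesis you have not yet established, and your derivation of (1) from (3) is circular. Similarly, your argument for irreducibility via ``facets of an irreducible alcove do not split as products'' would need a careful case analysis you have not supplied; the paper sidesteps this entirely by citing \cite[2.28(a)]{Lu4}, again a consequence of excellence.

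The fix is to insert the identification $r^*=w_{J\cup\{b^\vee\}}w_J$: once you know $r^*\in W^\vee$ is an involution stabilising $\mf{t}^J$, acting as a reflection in the wall of $C^{\vee,J}$ labelled by $b^\vee$, and fixing a point of that wall, standard Coxeter-group arguments force $r^*$ to be the unique such element, namely $w_{J\cup\{b^\vee\}}w_J$. This gives $r^*(J)=J$ and hence excellence, after which Lusztig's results apply.
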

\begin{proof}
Observe that the translation lattice $Q(R_{m,1}^\vee)$ is mapped injectively to a sublattice of 
the lattice $Y_J=\mf{t}^J_0\cap Y$ in $\mf{t}^J_0$.
As in the proof of Proposition \ref{prop:z}, for any mirror $M_{(P,\beta)}$ of $\Xi_{(P,\beta)}^{temp}$ 
a reflection $\mf{r}_{P,\beta}$ exists in $\mc{W}_\Xi$ 
which is an involutive automorphism on the space of induction data leaving $M_{(P,\beta)}$
pointwise fixed, and mapping $(P,\beta)$ to $(P,-\beta)$. It also follows from the proof of Proposition 
\ref{prop:z} that $\mf{r}_{P,\beta}$ can be represented by an element $\sigma=nw\in W_0$ for some 
$n\in W_0$ with $w(P)=P$ and $w\in W_P$, such that $\sigma(L)=L$. In particular $\sigma(\mf{t}^J_0)=
\mf{t}^J_0$. 
Suppose that $\xi_1\in M_{(P,\beta)}$, implying that $\xi_1$ is fixed by $\mf{r}_{P,\beta}$.
Then $\sigma(\mf{t}^J)=\mf{t}^J+y$ for $y=\sigma(e)-e\in Y$. 
Let $r^*=t_{-y}\circ \sigma$, then  $r^*\in W^\vee$ 
is the unique involutive affine isomorphism of $\mf{t}^J$ whose image in $W(R_m^\vee)$ 
is equal to $\sigma$, 
normalizing $\mf{t}^J$, and fixing $e\in C^{\vee,J}$. Clearly $r^*$ acts on 
$\mf{t}^J$ as the affine refection in the affine hyperplane of $\mf{t}^J$ through $e$ corresponding 
to $M_{(P,\beta)}$. Now let $M_{(P,\beta)}$ be defined by a dual affine simple root 
$b^\vee\in \mc{F}^m\backslash J$. 
It follows in a standard way that $r^*$ must be equal to $w_{J\cup\{b^\vee\}}w_J$ 
(where $w_J$ and $w_{J\cup\{b^\vee\}}$ denote the longest elements in the finite 
Weyl groups $W_J$ and $W_{J\cup\{b^\vee\}}$ respectively) and that $r^*(J)=J$.
It follows that $J$ is excellent as claimed. This proves (1), and along the way we also proved 
(2) and (3) (the remark on $N_{W^\vee}(W_J)$ is \cite[Proposition 2.26]{Lu4}, and 
the irreducibility of $W^{\vee,a}_1$ follows from \cite[2.28(a)]{Lu4}), and (4) is 
obvious.
\end{proof}
\begin{cor}\label{cor:corresp}
The map $D^a\phi_T$ induces a bijective correspondence between 
$\{I\subset \mc{F}_1^m\}$ and $\{I'\subset \mc{F}^m\mid J\subset I'\}$ such that 
$D^a\phi_T(C^{\vee,I}_1)=C^{\vee,I'}$. The map 
$D\phi_T$ induces a 
bijective correspondence between the set of parabolic subsystems $R_Q\subset R_{1,m}$ 
and the set of parabolic subsystems $R_{Q'}\subset R_m$ which contain $R_P$, 
such that $D\phi_T(\mf{t}_1^Q)=\mf{t}^{Q'}$.  
\end{cor}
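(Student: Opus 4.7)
The plan is to derive this as a combinatorial corollary of Proposition \ref{prop:excel}. Parts (3) and (4) of that Proposition already give that $D^a\phi_T$ is an affine isomorphism from $\mf{t}_1$ onto $\mf{t}^J$ carrying $C^\vee_1$ onto $C^{\vee,J}$ and conjugating $W^{\vee,a}_1$ onto the affine reflection group $W^*\subset N_{W^\vee}(W_J)$ acting on $\mf{t}^J$. In particular $C^{\vee,J}$ is a fundamental alcove for $W^*$, and the facets $C^{\vee,I}_1\subset C^\vee_1$ indexed by subsets $I\subset \mc{F}_1^m$ are transported bijectively to the facets of $C^{\vee,J}$ as alcove of $W^*$.

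The main geometric step is therefore to identify the facets of $C^{\vee,J}$ as a $W^*$-alcove with the facets $C^{\vee,I'}$ of $C^\vee$ that are contained in $C^{\vee,J}$, i.e.\ with the subsets $I'\subset \mc{F}^m$ such that $J\subset I'$. For this I would revisit the proof of Proposition \ref{prop:excel}: the generators $r^*_{a^\vee}=w_{J\cup\{b^\vee\}}w_J$ of $W^*$ constructed there each fix pointwise the restriction to $\mf{t}^J$ of the wall of $C^\vee$ corresponding to $b^\vee\in \mc{F}^m\setminus J$; hence the walls of $C^{\vee,J}$ as alcove for $W^*$ are precisely the intersections with $\mf{t}^J$ of the walls of $C^\vee$ indexed by $\mc{F}^m\setminus J$. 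This is the step I expect to require the most care: one must check that these restrictions are all distinct affine hyperplanes in $\mf{t}^J$ (which is guaranteed by the fact that no wall of $C^\vee$ outside $J$ contains $\mf{t}^J$), so that the labelling $I'\mapsto C^{\vee,I'}$ is indeed a bijection with the facets of the $W^*$-alcove. Combining with the first paragraph this gives the first correspondence.

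For the parabolic-subsystem statement I would take gradients. Each facet $C^{\vee,I}_1$ has a linear direction $\mf{t}_1^Q$, where $R_Q\subset R_{1,m}$ is the parabolic subsystem generated by the gradients of the affine simple roots in $I$; every parabolic subsystem of $R_{1,m}$ arises this way, since proper subsets of the simple affine roots of an irreducible affine root system generate all parabolic subsystems of the underlying finite root system (up to conjugacy, which is absorbed by the $W^{\vee,a}_1$-action on alcove facets). The analogous description holds on the $\mc{H}$-side, with $R_P$ corresponding to $J$ itself: the parabolic subsystems $R_{Q'}\subset R_m$ containing $R_P$ correspond to the subsets $I'\subset \mc{F}^m$ with $J\subset I'$. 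Since $D\phi_T$ is the gradient of $D^a\phi_T$ and hence carries linear directions of facets to linear directions of their $D^a\phi_T$-images, the first assertion immediately yields $D\phi_T(\mf{t}_1^Q)=\mf{t}^{Q'}$, completing the proof.
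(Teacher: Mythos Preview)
Your treatment of the first assertion is correct and essentially the same as the paper's: both reduce it to the fact, established in Proposition~\ref{prop:excel}, that $D^a\phi_T$ carries $C^\vee_1$ onto $C^{\vee,J}$ and conjugates $W^{\vee,a}_1$ onto $W^*$, so that the walls of $C^{\vee,J}$ as $W^*$-alcove are exactly the restrictions of the walls of $C^\vee$ indexed by $\mc{F}^m\setminus J$.

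The second assertion, however, has a genuine gap. Your argument hinges on the claim that \emph{every} parabolic subsystem $R_Q\subset R_{1,m}$ arises as the gradient content of some subset $I\subset\mc{F}_1^m$, i.e.\ that $\mf{t}_1^Q$ is the linear direction of some facet $C^{\vee,I}_1$ of the fundamental alcove. This is false. Already in type $\textup{A}_3$ there are six rank-one parabolic subsystems (one for each positive root), but only four subsets $I$ of size one; the gradients of $a_0,a_1,a_2,a_3$ are $-\theta,\alpha_1,\alpha_2,\alpha_3$, so the parabolics $\langle\alpha_1+\alpha_2\rangle$ and $\langle\alpha_2+\alpha_3\rangle$ never occur. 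Your parenthetical remark about ``up to conjugacy, absorbed by the $W^{\vee,a}_1$-action on alcove facets'' does not repair this: the statement to be proved is about individual parabolic subsystems and the specific linear map $D\phi_T$, not about conjugacy classes.

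The paper avoids this by not going through the facet bijection at all for the second part. Instead it uses Proposition~\ref{prop:excel}(4): the image $D^a\phi_T(0)=\omega_i$ is a \emph{special} vertex for $W^*$ on $\mf{t}^J$, so every $W^*$-reflection hyperplane in $\mf{t}^J$ is parallel to one through $\omega_i$. Since the same holds on the source side (the origin $0$ is special for $W^{\vee,a}_1$), the linear map $D\phi_T$ matches the full linear root-hyperplane arrangement in $\mf{t}_1$ with the arrangement of linear directions of $W^*$-hyperplanes in $\mf{t}^J$, and hence matches their intersection lattices. Parabolic subsystems on each side are then read off from these intersections. This use of the special vertex is the missing ingredient in your argument; once you invoke it, the deduction of the second claim is immediate and does not need the first.
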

\begin{proof}
Both claims are simple consequences of  Proposition \ref{prop:excel}.
(first note that the result is trivial if $P=F_0$).
For the second claim, observe that by definition of $P$, we have $\mf{t}^P=\mf{t}^J_0$.
Since $\omega_i$ is special with respect to the action of $W^*$, every intersection 
affine root hyperplanes $V'$ of a affine roots of $R_m^{(1)}$ with $\mf{t}^J$ is parallel 
to a unique affine subspace of $\mf{t}^J$ through $\omega_i$ which is an intersection of 
affine hyperplanes. This corresponds via $D\phi_T^a$ to a linear subspace $V$ of $\mf{t}_1$ 
which is an intersection of linear root hyperplanes in $\mf{t}_1$. Thus $D^a\phi_T$ sets up a 
bijection between the collection of linear subspaces of $\mf{t}_1$ which are an intersection 
of roots of $R_{1,m}$ and the collection of classes of affine subspaces $V'$ of 
$\mf{t}^J$ 
which are intersections of affine root hyperplanes for $R_m^{(1)}$ and which are parallel 
to each other. To each such 
class of affine subspaces parallel to $V'$ we attach the set of roots $R(V')\subset R_m$ which are  
constant on $V'$. Then $R(V')=R_{Q'}$, a parabolic subsystem containing $P$, and it is 
clear that $D\phi_T$ yields the desired bijection.
\end{proof}
\begin{cor} \label{cor:origin}
In Corollary \ref{cor:corresp} we have $\phi_T(T_{1,Q})=K_L^n(T_{Q'}\cap L)^0/K_L^n$.
\end{cor}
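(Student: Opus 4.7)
The plan is to reduce the problem to a statement about tangent spaces at the unit and then invoke the Weyl-equivariance already established. By (T2) the map $\phi_T\colon T_1\to L_n$ is a homomorphism of algebraic tori over $\mb{L}$ carrying the identity of $T_1$ to the distinguished unit of the $T^L_n$-torsor $L_n$; by (T1) it is finite, so its differential $D\phi_T\colon\mf{t}_1\to\mf{t}^P$ (where we identify the tangent space of $L_n$ at its unit with $\mf{t}^P=\textup{Lie}(T^P)$) is a linear isomorphism. Consequently $\phi_T(T_{1,Q})$ is the unique connected algebraic subgroup of $L_n$ through its unit whose Lie algebra is $D\phi_T(\mf{t}_{1,Q})$, where $\mf{t}_{1,Q}=\mathbb{R}R_Q^\vee$ denotes the Lie algebra of the subtorus $T_{1,Q}\subset T_1$.

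To identify $D\phi_T(\mf{t}_{1,Q})$, I combine Proposition \ref{prop:excel}(3) with Corollary \ref{cor:corresp}. The affine isomorphism $D^a\phi_T$ sends the origin $0$ to the special vertex $\omega_i\in C^{\vee,J}$, so its linear part $D\phi_T$ intertwines the finite linear Weyl group $W(R_{1,m})\subset W_1^{\vee,a}$ (the isotropy of $0$) with the corresponding linear reflection group $W^*_{\textup{fin}}\subset W^*$ (the isotropy of $\omega_i$) acting on $\mf{t}^P$. Under this intertwining the parabolic subgroup $W(R_{1,Q})$ is matched with a reflection subgroup $W^*_{Q'}\subset W^*_{\textup{fin}}$ whose fixed subspace on $\mf{t}^P$ is precisely $D\phi_T(\mf{t}_1^Q)=\mf{t}^{Q'}$ (using $\mf{t}_1^Q=\mf{t}_1^{W(R_{1,Q})}$ and Corollary \ref{cor:corresp}). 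Since both reflection groups act orthogonally, the two decompositions
\begin{equation*}
\mf{t}_1=\mf{t}_1^Q\oplus\mf{t}_{1,Q},\qquad \mf{t}^P=\mf{t}^{Q'}\oplus(\mf{t}^{Q'})^\perp
\end{equation*}
are the canonical orthogonal splittings into invariant and coinvariant parts and are therefore preserved by the equivariant linear isomorphism $D\phi_T$. Hence
\begin{equation*}
D\phi_T(\mf{t}_{1,Q})=(\mf{t}^{Q'})^\perp\cap\mf{t}^P=\mf{t}_{Q'}\cap\mf{t}^P=\textup{Lie}\bigl((T_{Q'}\cap T^P)^0\bigr).
\end{equation*}

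It remains to descend from $T^P$ to the torsor $L_n=L/K_L^n$. Since $R_P=R_L\subset R_{Q'}$, one has $T_P\subset T_{Q'}$, and because $r\in T_P$ (see \S\ref{subsub:tempcc}) one also has $r\in T_{Q'}$. Thus $T_{Q'}\cap L=T_{Q'}\cap rT^P=r(T_{Q'}\cap T^P)$, so $(T_{Q'}\cap L)^0=r(T_{Q'}\cap T^P)^0$. Passing to the quotient $L_n$, in which the coset $rK_L^n$ is the unit $\phi_T(e)$, this connected subvariety becomes the subtorus of $L_n$ through $\phi_T(e)$ with Lie algebra $\mf{t}_{Q'}\cap\mf{t}^P$, which by the previous paragraph is exactly $\phi_T(T_{1,Q})$. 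Rewriting the result as $K_L^n(T_{Q'}\cap L)^0/K_L^n$ absorbs the $K_L^n$-ambiguity in the choice of representative and yields the corollary. The main technical point is the orthogonal step in the second paragraph: by invoking the canonical coinvariant characterization of $\mf{t}_{1,Q}$ and $\mf{t}_{Q'}\cap\mf{t}^P$ relative to the \emph{intertwined} orthogonal reflection group actions, one avoids having to check that $D\phi_T$ is an isometry on the nose.
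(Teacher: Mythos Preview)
Your proof is correct, but it takes a different route from the paper's. The paper argues more directly: it first observes that $\dim D\phi_T(\mf{t}_{1,Q})=\dim(\mf{t}_{Q'}\cap\mf{t}^P)$ because $\mf{t}_1^Q\subset\mf{t}_1$ and $\mf{t}^{Q'}\subset\mf{t}^P$ have equal codimension (by the very definition of the correspondence in Corollary~\ref{cor:corresp}), so it suffices to prove the inclusion $D\phi_T(\mf{t}_{1,Q})\subset\mf{t}_{Q'}\cap\mf{t}^P$. For this the paper notes that $D\phi_T$ must send each coroot $\alpha^\vee\in R_Q^\vee$ to a scalar multiple of the projection onto $\mf{t}^P$ along $\mf{t}_P$ of some coroot in $R_{Q'}^\vee$; since $\mf{t}_P\subset\mf{t}_{Q'}$, such a projection lies in $\mf{t}_{Q'}\cap\mf{t}^P$. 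Your argument instead characterizes both $\mf{t}_{1,Q}$ and $\mf{t}_{Q'}\cap\mf{t}^P$ intrinsically as the coinvariant complements to the fixed subspaces under the intertwined reflection groups $W(R_{1,Q})$ and $W^*_{Q'}$, and then invokes equivariance of $D\phi_T$. The paper's approach is more concrete and makes visible exactly where each coroot lands; yours is more structural and avoids tracking individual coroots, at the price of needing to verify (as you implicitly do) that the restricted $W_0$-invariant inner product on $\mf{t}^P$ is $W^*_{Q'}$-invariant so that the orthogonal complement really is the canonical coinvariant summand. The reduction to Lie algebras and the descent to $L_n$ are essentially the same in both.
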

\begin{proof}
It is enough to prove that $D\phi_T(\mf{t}_{1,Q})=\mf{t}_{Q'}\cap \mf{t}^P$. The 
dimensions of these subspaces are equal since $\mf{t}^Q_1\subset \mf{t}_1$ and 
$\mf{t}^{Q'}\subset \mf{t}^P$ have the same codimension, by definition of the correspondence. 
Hence it is enough to prove that $D\phi_T(\mf{t}_{1,Q})\subset \mf{t}_{Q'}\cap \mf{t}^P$. 
This follows by the remark that $D\phi_T$ must map $\alpha^\vee\in R_Q^\vee$ to a 
multiple of the projection of a coroot in $R_{Q'}$ onto $\mf{t}^P$ along $\mf{t}_P$.
Since $\mf{t}_P\subset \mf{t}_{Q'}$ this implies that the image is in $\mf{t}_{Q'}\cap \mf{t}^P$.
\end{proof}
\subsection{Composition of spectral transfer maps; spectral transfer morphisms}\label{subsub:compos}
Let $\phi_T$ be a spectral transfer map from $(\mc{H}_1,\tau^{d_1})$ 
to $(\mc{H}_2,\tau^{d_2})$ and let $\psi_T$ be a spectral transfer map from 
$(\mc{H}_2,\tau^{d_2})$ to $(\mc{H}_3,\tau^{d_3})$. We would like to define the 
the composition $\rho_T:=\psi_T\circ\phi_T$, but in order to do so we need to come to grips 
with the fact that the image of $\phi_T$ is the \emph{quotient} $L_n$ of a residual coset 
$L\subset T_2$. We are saved here by Proposition \ref{prop:z}. Indeed, we have 
$L_n:=L/K_L^n$ and $\dot{K}_L^n\subset N_{W_{2,0}}(L)$ by definition. By Proposition 
\ref{prop:z}, for all $k\in \dot{K}_L^n\subset N_{W_{2,0}}(L)$ there exists a 
$w\in N_{W_{3,0}}(M)$ 
such that $\psi_T\circ k=w\circ \psi_T$, where $M\subset T_3$ is a residual coset such that 
$\psi_T(T_2)=M_n$. By Proposition \ref{prop:restrans} there exists a residual coset $N\subset M$ 
such that $\psi_T(L)=\overline{N}:=N/{N_{K_M^n}(N)}$. Observe that $\dot{K}_M^n(N)\subset\dot{K}_N^n$ 
since the latter 
is the subgroup of elements of $W_{3,0}$ which stabilize $N$ and restrict to a translation on $N$
(i.e. a multiplication by some $k' \in T^N$).
Hence $N_n:=N/K_N^n$ is a quotient of $\overline{N}$. 
By the above and property (T2) of $\psi_T$ (which implies that $D\psi_T|_{\mf{t}^L}:\mf{t}^L\to\mf{t}^N$ 
is a linear isomorphism), we see that $w\in N_{W_{3,0}}(N)$ and that 
$w$ restricts to a translation on $N$. 
Hence $w\in K_N^n$.
In other words, $\psi_T|_L:L\to \overline{N}$ maps $K_L^n$ orbits on 
$L$ to $K_N^n/K_M^n(N)$ orbits on $\overline{N}$, and thus defines a map 
$\psi_{T,n}: L_n\to N_n$. Now we can finally define the composition of  $\phi_T$ and 
$\psi_T$ to be the map $\rho_T:=\psi_{T,n}\circ \phi_T$. 

It is clear that $\rho_T$ satisfies (T1) and that $\rho_T$ is an affine homomorphism 
from $T_1$ onto $N_n$ where $N\subset T_3$ is a residual coset. 
In order to verify (T2) we need to check in addition that $\rho_T(e)\in (T_N\cap N)/K_N^n$.  
Let $T_L\subset T_2$ be the subtorus whose Lie algebra 
is spanned by the coroots of the roots which restrict to constants on $L$ (and similarly 
we define $T_M$ and $T_N$).  Since $N\subset M$ we see that  
$T_M\subset T_N$. 
Observe that $R_{2,m,L}$ and $R_{3,m,N}$ correspond to each other under $\psi_T$
in the sense of Corollary \ref{cor:corresp}, and therefore we have 
$\psi_T(T_L)= K_M^n(T_N\cap M)^0/K_M^n\subset \overline{T_N}:=K_M^nT_N/K_M^n$ 
by Corollary \ref{cor:origin}. In particular, if $r_L\in K_L^n\phi_T(e)\subset L\cap T_L$ (using (T2) for $\phi_T$) then 
$\psi_T(r_L)\in\overline{T_N}$. Clearly $\psi_T(r_L)\in\overline{N}$ 
by definition of $N$, so that $\psi_T(r_L)\in \overline{N}\cap \overline{T_N}$. It follows 
that $\rho_T(e)=\overline{\psi_T}(r_LK_L^n/K_L^n)\in (N\cap T_N)/K_N^n$.
Hence $\rho_T$ also satisfies (T2).

Write $\mu_i=\mu_{\mc{R}_i,m_i,d_i}$ for the $\mu$-functions of the $(\mc{H}_i,\tau^{d_i})$.
Then $\phi^*_T(\mu_2^{(L_n)})=a\mu_1$
and $\psi^*_T(\mu_3^{(M_n)})=b\mu_2$. We claim that this last
identity implies that
$\overline{\psi_T}^*(\mu_3^{(N_n)})=b^\prime\mu_2^{(L_n)}$ for some nonzero complex
number $b^\prime$. 
For this it suffices to prove that $\psi_T^*(\mu_3^{(\overline{N})})=b^\prime\mu_2^{(L)}$, 
where $\mu_3^{(\overline{N})}$ is the restriction to $\overline{N}$ of the regularization of 
$\mu_3^{(M_n)}$ along $\overline{N}$. To prove this statement, note that the irreducible factors of
$\mu_3^{(M_n)}$ are of the form $(1-v^i\zeta\alpha/n)$
with $\alpha\in R_{3,m}$ not constant on $\mf{t}^M_3$, $i$ and $n$ certain integers 
(depending on $\alpha$), 
and $\zeta$ an $n$-th root of $1$.
Therefore we have (using (T1) for $\psi_T$)
$\psi^*_T(1-v^i\zeta\alpha/n)=(1-v^{i'}\zeta'\beta)$ where $\beta$ is a character of $T_2$, 
$\zeta'$ a complex root of $1$, and $i'$ is an integer.
By construction $(1-v^{i'}\zeta'\beta)$ is identically zero on $L$ iff
$(1-v^i\zeta\alpha/n)$ is identically zero on $\overline{N}$, and in this case all 
irreducible factors
of $(1-v^{i'}\zeta'\beta)$ restrict to constants on $L$.
This easily implies the claim.
We conclude that $\rho_T^*(\mu_3^{(N_n)})=ab^\prime\mu_1$,
proving (T3) for $\rho_T$.
Thus $\rho_T=\overline{\psi}_T\circ\phi_T$ is a spectral transfer map. 
\emph{From now on we will denote the composition $\rho_T$ simply by 
$\psi_T\circ \phi_T$ instead of $\overline{\psi}_T\circ\phi_T$}.

The most trivial examples of spectral transfer maps are Weyl group elements.
If $(\mc{H},\tau^d)$ is a normalized affine Hecke algebra with $\mc{H}=\mc{H}(\mc{R},m)$
then for any $w\in W_0$ we have an invertible spectral transfer map $w:T\to T$. 
Given a spectral transfer map $\phi_T$ from $(\mc{H}_1,\tau^{d_1})$ to $(\mc{H}_2,\tau^{d_2})$ 
with image $L_n$ where $L\subset rT^P\subset T$ (with $R_P\subset R_0$ a parabolic subsystem)  
it follows
from Proposition \ref{prop:z} that $W_{2,0}\circ\phi_T\circ W_{1,0}=W_{2,0}\circ \phi_T$.
Therefore we can define an equivalence relation as follows.
We call two spectral transfer maps $\phi_{1,T},\phi_{2,T}: T_1\to T_2$ from 
$(\mc{H}_1,\tau^{d_1})$ to $(\mc{H}_2,\tau^{d_2})$
\emph{equivalent} if $\phi_{2,T}\in W_{2,0}\circ\phi_{1,T}$. 
As a special case, we may post-compose 
$\phi_T$ with an affine map $M_k: L  \to L$ defined by $M_k(t)=kt$ where $k\in K_L^n$
within the same equivalence class. 

By the above remarks it is not difficult to check that 
this notion of equivalence of spectral transfer maps is compatible with the operation
of composition of spectral transfer maps (this follows from Proposition \ref{prop:z}).  
This shows that the following definitions makes sense:
\begin{defn}
The spectral transfer category $\mf{C}$ is the category formed by the normalized
affine Hecke algebras as objects, and equivalence classes of spectral transfer maps as morphisms.
A spectral transfer morphism (abbreviated to STM) from $(\mc{H}_1,\tau^{d_1})$ to $(\mc{H}_2,\tau^{d_2})$ 
is denoted by 
$\phi:(\mc{H}_1,\tau^{d_1})\leadsto(\mc{H}_2,\tau^{d_2})$ (or simply 
$\phi:\mc{H}_1\leadsto\mc{H}_2$
if there is no danger of confusion). 
If $\phi_T:T_1\to T_2$ is an affine morphism in the equivalence  
class of the STM $\phi$ we say that $\phi$ is represented by $\phi_T$. The composition 
$\psi\circ\phi$ of STMs is defined by composing representing 
STMs: $(\psi\circ\phi)_T=\psi_T\circ\phi_T$.
\end{defn}
It is obvious that two spectral 
transfer maps $\phi_{1,T},\phi_{2,T}: T_1\to T_2$ are equivalent iff $\phi_{1,Z}=\phi_{2,Z}$.
Hence an STM $\phi:(\mc{H}_1,\tau^{d_1})\leadsto(\mc{H}_2,\tau^{d_2})$ 
defines a finite morphism $\phi_Z$ from the spectrum of the center of $\mc{H}_1$ to the 
spectrum of the center of $\mc{H}_2$. 
Similarly we obtain a smooth finite map $\phi_Z^{temp}$. 
\begin{defn}
Let $\phi:(\mc{H}_1,\tau^{d_1})\leadsto(\mc{H}_2,\tau^{d_2})$ be an STM
and let $\Phi=W_{2,0}\circ\phi_T$ be the associated class of morphisms.
We attach various invariants to $\Phi$. The morphism $\phi_Z$ (see Corollary
\ref{cor:z}) clearly only depends on $\Phi$, which we express by writing $\Phi_Z$
and $\Phi^{temp}_Z$. By the \emph{image} $\textup{Im}(\Phi)$ of $\Phi$ we mean the
$W_{2,0}$-orbit of residual cosets $W_{2,0}\phi_T(T_1)\subset W_{2,0}\backslash T_2$.
Similarly, the \emph{tempered image} of $\Phi$ is the image $\phi_Z^{temp}(S_1)\subset S_2$.
We define two nonnegative integers associated with $\Phi$,
the \emph{rank} $\textup{rk}(\Phi)=\textup{dim}(T_1)-1$
and the \emph{co-rank} $\textup{cork}(\Phi)=\textup{dim}(T_2)-\textup{dim}(T_1)$.
\end{defn}
\section{The tempered correspondence of an STM}
We will show that a spectral transfer 
morphism induces a correspondence on the tempered spectra, respecting the 
connected components and Plancherel measures.
\begin{theorem}\label{thm:corr}
Let $\phi:(\mathcal{H}_1,\tau^{d_1})\leadsto (\mathcal{H}_2,\tau^{d_2})$ be an STM.
Consider the correspondence between $\mf{S}_1$ and $\mf{S}_2$ given by the fibered product
$\mf{S}_{12}$ of $\mf{S}_1$ and $\mf{S}_2$
with respect to the diagram:
\begin{equation}
\begin{CD}
\mf{S}_{12}@>{p_1}>>\mf{S}_1\\
@V{p_2}VV                  @VV{\phi_Z^{temp}\circ p_{Z,1}^{temp}}V\\
\mf{S}_2@>>{p_{Z,2}^{temp}}> S_2
\end{CD}
\end{equation}
If $C\subset \mf{S}_{12}$
is a component then $C_i:=p_i(C)\subset\mf{S}_i$ is a component
($i=1,2$). Moreover there exists a positive measure $\nu$ on $C$ and
$r_i\in\mathbb{Q}_+$ such that
\begin{equation}
(p_i)_*(\nu)=r_i\nu_{Pl,i}|_{C_i}
\end{equation}
for $i=1,2$.
\end{theorem}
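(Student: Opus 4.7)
The plan is to reduce to a single component of $\mf{S}_{12}$ and there construct the measure $\nu$ explicitly from the relation between the $\mu$-functions. First I would decompose $\mf{S}_i$ into components $\mf{S}_{i,(P_i,\delta_i)}$ using (\ref{eq:cccomp}). By Corollary \ref{cor:z} and Proposition \ref{prop:S}, for each component $C_1 = \mf{S}_{1,(P_1,\delta_1)}$ there is a unique residual coset $L_1 \subset T_1$ with $S_{1,(P_1,\delta_1)} = W_{1,0}\backslash W_{1,0}L_1^{temp}$, and $\phi_Z^{temp}\circ p_{Z,1}^{temp}$ sends $C_1$ into a unique component $S_{2,(P_2,\delta_2)} \subset S_2$ attached to a residual coset $L_2 \subset T_2$. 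Proposition \ref{prop:restrans} identifies $\phi_T(L_1)$ (up to a $W_{2,0}$-translate) with $K^n_{L}L_2/K^n_L$, and in particular $\mf{S}_{12}$ decomposes as a disjoint union of components indexed by such matched pairs $(C_1,C_2)$; this immediately shows that $p_i$ sends components to components.

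Second, I would transfer the key density identity from $T_1$ to the residual coset $L_1$. Property (T3) together with Proposition \ref{prop:ratcon} gives $\phi_T^*(\mu_2^{(L_n)}) = a\,\mu_1$ with $a\in\mathbb{Q}^\times$. Combining this with the factorization (\ref{eq:musplit}) and using Proposition \ref{prop:reg} to restrict and regularize along $L_1$ yields an identity
\begin{equation*}
(\phi_T|_{L_1})^*(\mu_2^{(L_2)}) = b\,\mu_1^{(L_1)}
\end{equation*}
on $L_1$, for some $b\in\mathbb{Q}^\times$. By Lemma \ref{lem:plan} the densities $m_{L_i} = \epsilon_{L_i}\mu_i^{(L_i)}|_{L_i^{temp}}$ are smooth positive $N_{W_{i,0}}(L_i)$-invariant functions describing the Plancherel measures on $C_i$ up to positive rational constants $a_{(P_i,\delta_i)}$, via Theorem \ref{thm:plancherel}.

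Third, I would build $\nu$ on the component $C\subset\mf{S}_{12}$ lying over $(C_1,C_2)$. Using the finite covers $L_i^{temp}\to C_i$ of Proposition \ref{prop:S}(iii) and the finite surjection $\phi_T|_{L_1}:L_1^{temp}\to K^n_LL_2^{temp}/K^n_L$ of Proposition \ref{prop:restrans}, one obtains a fibered product $\widetilde{C}\subset L_1^{temp}\times L_2^{temp}$ equipped with the smooth positive density $m_{L_1}\,d^{L_1}(t_1)=|b|^{-1}(\phi_T|_{L_1})^*(m_{L_2})\,d^{L_1}(t_1)$. Proposition \ref{prop:z} ensures that the $N_{W_{i,0}}(L_i)$-actions are intertwined by $\phi_T$, so this density descends along the covering $\widetilde{C}\to C$ to a smooth positive measure $\nu$ on $C$. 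Then $(p_i)_*\nu$ is a rational multiple of $\nu_{Pl,i}|_{C_i}$, where $r_i\in\mathbb{Q}_+$ is obtained by combining the degrees of the finite coverings involved with the constants $a_{(P_i,\delta_i)}$ of Theorem \ref{thm:plancherel} and $|b|$.

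The hard part will be the bookkeeping of finite group quotients that intervene at every level, namely $K_L^n$, $N_{W_{i,0}}(L_i)$, and the groupoid $\mc{W}_{(P,\delta)}$, and verifying that the naive density $m_{L_1}\,d^{L_1}(t_1)$ really does descend consistently to the fiber product $C$ in a way that interacts properly with both projections $p_1$ and $p_2$. The key algebraic input that makes all constants rational is Proposition \ref{prop:ratcon}, which ensures the constant $a$ in (T3), and hence also $b$, is rational; combined with the rationality of $D_\delta$ in Theorem \ref{thm:ratfdeg}(ii) and of $a_{(P,\delta)}$ in Theorem \ref{thm:plancherel}, this guarantees $r_1,r_2\in\mathbb{Q}_+$.
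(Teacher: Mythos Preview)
Your proposal is correct and follows essentially the same route as the paper: both arguments use Proposition~\ref{prop:restrans}, Corollary~\ref{cor:z} and Proposition~\ref{prop:S} to handle the component-to-component statement, then derive the regularized identity $(\phi_T|_{L_1})^*(\mu_2^{(K^n_L L_2/K^n_L)})=b\,\mu_1^{(L_1)}$ from (T3) and Proposition~\ref{prop:ratcon}, and finally build $\nu$ from the Plancherel densities of Lemma~\ref{lem:plan} and Theorem~\ref{thm:plancherel}. The only cosmetic difference is that the paper defines $\nu$ by pulling back $\nu_{S_2}'|_{B_2'}$ from the $\mc{H}_2$ side through the diagram of finite covers on the regular loci $C_1''$, $B_1''$, $C_2'$, $B_2'$ (and then extending by zero), whereas you construct it from $m_{L_1}\,d^{L_1}$ on the $\mc{H}_1$ side; the $\mu$-identity makes these two choices proportional, so the difference is immaterial.
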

\begin{proof}
Let $L\subset T_2$ be a residual coset which belongs to the image of $\Phi$.
Let $B_2\subset S_2$ be a component. Then $B_2\subset\phi_Z^{temp}(S_1)$ iff 
$B_2$ is associated to a residual coset $L_2\subset T_2$ (in the sense of Proposition \ref{prop:S}(i))
such that $L_2\subset W_0L$.
By Proposition \ref{prop:restrans} and Corollary \ref{cor:z} we see that $\phi_Z^{-1}(L_2)$
consists of a union of finitely many $W_{1,0}$-orbits of residual cosets in $T_1$.
Using Proposition \ref{prop:S} we see that $(\phi_Z^{temp})^{-1}(B_2)$ consists of a union
of finitely many components $B_1\subset S_1$, each of which has the property that
$\phi_Z^{temp}(B_1)=B_2$. It follows in particular that for any component $B_1\subset S_1$
its image $\phi_Z^{temp}(B_1)\subset S_2$ is a component of $S_2$. By Proposition
\ref{prop:S}(i) we see that the correspondence associated to $\mf{S}_{12}$ indeed
yields a finite correspondence between the components of $\mf{S}_1$ and $\mf{S}_2$.
This proves the first assertion of the Theorem.

Let $C_1\subset\mf{S}_1$ and $C_2\subset\mf{S}_2$ be corresponding components.
Put $B_i=p_{Z,i}^{temp}(C_i)$ for the corresponding components of $S_i$.
Let $L_i\subset T_i$ be corresponding residual cosets in the sense of Proposition
\ref{prop:S}. We may assume without loss of generality that $L_2\subset L$ and
that $L_1$ is a component of $\phi_T^{-1}(K^n_LL_2/K^n_L)$.
By Proposition \ref{prop:S}(iii) and \cite[Lemma 4.35]{Opd1} we see that
$B_i^\prime$ corresponds to $N_{W_{i,0}}(L_i)$-orbits of $R_{L_i}$-generic
points of $L_i^{temp}$. By Proposition \ref{prop:z} we see that
$B_1^\prime\subset B_2^{\prime\prime}:=(\phi_Z^{temp}|_{B_1})^{-1}(B_2^\prime)$.
Using \cite[Lemma 4.35]{Opd1} and (T1) we see that
$\phi_Z^{temp}|_{B_1}:B_1^{\prime\prime}\to B_2^\prime$ is a finite covering map.
By (T1) and Theorem \ref{thm:plancherel} the subset
$C_1^{\prime\prime}=(p_{Z,1}^{temp}|_{C_1})^{-1}(B_1^{\prime\prime})\subset C_1$
is dense and its complement has measure zero with respect to $\nu_{Pl,1}|_{C_1}$.
Consider the commuting diagram
\begin{equation}\label{eq:commreg}
\begin{CD}
C^{\prime\prime}@>{p_1|_{C^{\prime\prime}}}>>C_1^{\prime\prime}\\
@V{p_2|_{C^{\prime\prime}}}VV                  @VV{\phi_Z^{temp}\circ p_{Z,1}^{temp}|_{C_1^{\prime\prime}}}V\\
C_2^\prime@>>{p_{Z,2}^{temp}|_{C_2^\prime}}> S_2^\prime
\end{CD}
\end{equation}
By the above this is a diagram of finite covering maps of smooth manifolds.
We use the notation of (the proof of) Lemma \ref{lem:plan} and Theorem
\ref{thm:plancherel}.
Let $\nu$ be the measure on $C$ obtained by the extension by zero
of the pull back of the smooth volume form $\nu_{S_2}^\prime|_{B_2^\prime}$.
Since $p_2|_{C^{\prime\prime}}$ is a finite covering map, the push
forward of the measure $\nu|_{C^{\prime\prime}}$ to $C_2$
is a nonzero integer constant multiple of the pull back of $\nu_{S_2}^\prime$
to $C_2^\prime$. By Theorem \ref{thm:plancherel} the Plancherel measure
$\nu_{Pl,2}$ is a nonzero rational multiple of the extension by zero
of this measure. Hence $(p_2)_*(\nu)=r_2\nu_{Pl,2}|_{C_2}$ for a
certain $r_2\in\mathbb{Q}^\times$, as desired.

Finally we need to show that
\begin{equation}\label{eq:nonrat}
(p_1)_*(\nu)=r_1\nu_{Pl,1}|_{C_1}
\end{equation}
for some
nonzero rational constant $r_1\in\mathbb{Q}^\times$.
By Theorem \ref{thm:plancherel} it suffices to showing
\begin{equation}
(\phi_Z^{temp}|_{B_1^{\prime\prime}})^*(\nu_{S_2}^\prime)=r^\prime_1\nu_{S_1}^{\prime\prime}
\end{equation}
where $\nu^{\prime\prime}_{S_1}$ is the restriction of $\nu_{S_1}^\prime$
to $S_1^{\prime\prime}$, and $r^\prime_1\in\mathbb{Q}^\times$.
By the text above Theorem \ref{thm:plancherel} this boils down to proving
that
\begin{equation}\label{eq:help}
(\phi_T|_{L_1})^*(\mu_2^{(K_L^nL_2/K^n_L)})=r_1^{\prime\prime}\mu_1^{(L_1)}
\end{equation}
for some constant $r_1^{\prime\prime}\in\mathbb{Q}^\times$.
This is true by definition, and by Proposition \ref{prop:ratcon}.
\end{proof}
\begin{rem}
All results discussed so far are true, mutatis mutandis, in the slightly more general setting 
where we lift the condition on $\phi_T$ that it has to satisfy $\phi_T(e)\in (T_L\cap L)/K_L^n$ (if $L/K^n_L$ 
denotes the image of $\phi_T$), and keep all other requirements. We call such a map 
an \emph{essentially strict} STM if (in the notation of paragraph \ref{par:neccond}) 
$\mf{t}_P\cap \mf{t}^J$ is a special vertex of $C^{\vee,J}$ for $W^*$. We denote the category of 
essentially strict STMs by $\mf{C}_{es}$.
\end{rem}
\section{Examples of spectral transfer morphisms} In this section we discuss 
opposite extreme examples of STMs: Spectral covering morphisms (which are STMs 
of maximal rank), and cuspidal STMs (which are STMs of rank $0$). Understanding these
cases is important for the applications of STMs as discussed in \cite{Opd4}.
\subsection{Spectral morphisms of maximal rank}
\subsubsection{Semi-standard spectral automorphisms}\label{par:specaut}
A semi-standard spectral endomorphism $\Phi:(\mc{H},\tau^d)\leadsto(\mc{H},\tau^d)$
is represented by a homomorphism 
$\phi_T:T\to T$ of algebraic tori with finite kernel, such that $\phi^*(\mu)=a\mu$ 
for some nonzero constant $a$.
Thus $\phi_T$ lifts via the exponential mapping of $T$ to a linear isomorphism
$D^a\phi_T:V\to V$ with the property that $D^a\phi_T$ induces a bijection on the $\mu$-mirrors
such that the functions $m_+$ and $m_-$ associated with the mirrors are preserved
(see Proposition \ref{prop:musym} for the notion of $\mu$-mirror), and in addition
normalizes $Y$.
In the language of the proof of Proposition \ref{prop:musym}(ii),(iv) this means that
$D^a\phi_T\in\textup{Aut}_V(\mu,Y)_0:=\{\xi\in \textup{Aut}_V(\mu,Y) \mid \xi(0)=0\}$.
If we do not require $\phi_T$ to be strict then we similarly obtain
$D^a\phi_T\in\textup{Aut}_V(\mu,Y)$.
It follows that:
\begin{prop}\label{prop:aut}
Assume that $\mc{H}=\mc{H}(\mc{R},m)$ is semi-simple with $(\mc{R},m)$ semi-standard.
The spectral endomorphisms of $(\mc{H},\tau^d)$ are invertible, and there is
a canonical isomorphism of $\textup{Aut}_\mf{C}(\mc{H},\tau^d)$
and the group $\textup{Out}_T(\mu)_e:=\{\psi\in \textup{Out}_T(\mu)\mid
\psi(e)=e\}$. If $(\mc{H},\tau^d)$ is standard then 
this equals  $\Omega_0^Y(R_0^\vee)$ (see Proposition \ref{prop:musym}).
Similarly non-strict spectral transfer endomorphisms are in fact essentially strict 
spectral transfer automorphisms. 
These form a group $\textup{Aut}_{\mf{C}_{es}}(\mc{H},\tau^d)$ which 
is canonically isomorphic to  $\textup{Out}_T(\mu)$.  
In particular the constant $a$ of (T3) equals $a=1$ for all (not necessarily strict) 
spectral endomorphism in this situation.
\end{prop}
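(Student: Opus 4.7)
The plan is to proceed in three steps: first identify a spectral endomorphism $\phi_T:T\to T$ as an isogeny of tori, then show that the scalar $a$ of (T3) must equal $1$, and finally read off the claimed isomorphism via Proposition \ref{prop:musym}.

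\emph{Step 1: $\phi_T$ is an isogeny.} Let $\Phi$ be represented by $\phi_T$. By (T1), $\phi_T$ is finite, so its image $L_n=L/K_L^n$ has dimension $\dim T$. This forces the residual coset $L\subset T$ to equal $T$ itself (the unique residual coset of full dimension, since its codimension is zero); consequently $T^L=T$, $T_L=\{e\}$, $K_L^n=1$, and $L_n=T$. Condition (T2) then asserts that $\phi_T$ is a homomorphism of algebraic tori, after declaring $\phi_T(e)$ to be the unit (with $\phi_T(e)=e$ in the strict case). Lifting through the exponential covering $V\to T_u$, we obtain an affine linear automorphism $D^a\phi_T:V\to V$ which normalizes the lattice $Y$, and which fixes $0$ in the strict case.

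\emph{Step 2: $a=1$.} Each irreducible factor of $\mu$ (see formula (\ref{eq:formc}) and Definition \ref{defn:mu}) has the normal form $(1\pm v^{-2m_{\pm}(\alpha)}\alpha^{-1})$ with leading constant $1$ in a Laurent monomial expansion. Because $\phi_T$ is a homomorphism of tori, the pullback of a character $\alpha\in X$ is again a character $\beta\in X$, and hence $\phi_T^*(1\pm v^{-2m_{\pm}(\alpha)}\alpha^{-1})=1\pm v^{-2m_{\pm}(\alpha)}\beta^{-1}$ is another factor of the same normalized form. The hypothesis $\phi_T^*\mu=a\mu$ therefore equates two normalized products of such factors, which forces both the multisets of factors and the scalar $a$ to agree; in particular $a=1$. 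We conclude that $D^a\phi_T\in\textup{Aut}_V(\mu,Y)$, the group studied in Proposition \ref{prop:musym}.

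\emph{Step 3: Identification with $\textup{Out}_T(\mu)$.} By Proposition \ref{prop:musym} we have $\textup{Aut}_V(\mu,Y)/Y=\textup{Aut}_T(\mu)=W_0\rtimes\textup{Out}_T(\mu)$. The ambiguity in the lift $D^a\phi_T$ is precisely $Y$, and the equivalence relation defining $\mf{C}$ (post-composition with $W_0$) quotients out the $W_0$ factor, producing a canonical homomorphism $\textup{Aut}_{\mf{C}_{es}}(\mc{H},\tau^d)\to\textup{Out}_T(\mu)$. Conversely, the semidirect product decomposition of Proposition \ref{prop:musym} allows us to lift any element of $\textup{Out}_T(\mu)$ to an element of $\textup{Aut}_V(\mu,Y)$, which descends to a spectral endomorphism of $T$ satisfying (T1)--(T3) with $a=1$; this provides the inverse map. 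Restricting to strict automorphisms imposes $D^a\phi_T(0)=0$, which after passage to $\textup{Out}_T(\mu)$ is precisely the condition of stabilizing $e\in T$, hence gives $\textup{Out}_T(\mu)_e$. Since $\textup{Out}_T(\mu)$ is finite, every spectral endomorphism is automatically invertible. In the standard case, Proposition \ref{prop:musym}(iv) reads $\textup{Out}_T(\mu)=\Omega_X^*\rtimes\Omega_0^Y$ with $\Omega_X^*\subset T$ acting by translation (which fixes $e$ only if trivial) and $\Omega_0^Y$ acting linearly (fixing $e$), so the stabilizer of $e$ is exactly $\Omega_0^Y$.

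The main obstacle is the argument in Step 2 that $a=1$: it relies on the precise normalization of the Macdonald $c$-functions and on the fact that a homomorphism of tori permutes the irreducible rational factors of $\mu$ preserving their leading coefficients. Once this normalization lemma is in place, Steps 1 and 3 are essentially bookkeeping, using Proposition \ref{prop:musym} and the definition of the equivalence relation on spectral transfer maps.
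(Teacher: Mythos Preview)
Your proof is correct and follows essentially the same route as the paper: identify a spectral endomorphism with an isogeny $\phi_T:T\to T$ (since $L=T$ is forced by dimension), lift to $D^a\phi_T\in\textup{Aut}_V(\mu,Y)$, and then invoke Proposition~\ref{prop:musym} together with the $W_0$-equivalence in $\mf{C}$ to obtain the identification with $\textup{Out}_T(\mu)$ (or its $e$-stabilizer in the strict case). The only cosmetic difference is your argument for $a=1$: the paper observes that $D^a\phi_T$ permutes the $\mu$-mirrors preserving the labels $m_\pm$, hence lies in $\textup{Aut}_V(\mu,Y)$ and so preserves $\mu$ exactly, whereas you compare leading coefficients in the Laurent factorization directly; both are valid and amount to the same thing.
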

Similarly we have:
\begin{prop}
The group $\textup{Aut}_\mf{C}(\mc{H},\tau^d)$ of spectral automorphisms of 
$\mc{H}=\mc{H}(\mc{R},m)$ (with $(\mc{R},m)$ semi-standard)
is canonically anti-isomorphic to the group $\textup{Aut}_{s}(\mc{H})$ of 
strict algebra automorphisms, via the right action of this last group on $T$.
Similarly, the group $\textup{Aut}_{\mf{C}_{es}}(\mc{H},\tau^d)$ of essentially 
strict STMs is anti-isomorphic to $\textup{Aut}_{es}(\mc{H})$.
\end{prop}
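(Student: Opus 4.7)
The plan is to chain Proposition \ref{prop:aut} with Proposition \ref{prop:musym}(v) through the common intermediary $\textup{Out}_T(\mu)$ (respectively its isotropy subgroup $\textup{Out}_T(\mu)_e$), and to verify that the notions of strictness on the two sides match up under this chain.

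First, Proposition \ref{prop:aut} yields canonical group isomorphisms
\[
\textup{Aut}_{\mf{C}_{es}}(\mc{H},\tau^d) \isom \textup{Out}_T(\mu), \qquad
\textup{Aut}_{\mf{C}}(\mc{H},\tau^d) \isom \textup{Out}_T(\mu)_e,
\]
sending a spectral automorphism $\phi$ to the class of its representing torus map $\phi_T$ modulo the action of $W_0$. Proposition \ref{prop:musym}(v) in turn supplies the canonical identification of $\textup{Out}_T(\mu)$ with $\textup{Aut}_{es}(\mc{H})$, the \emph{opposite} of the group of essentially strict algebra automorphisms of $\mc{H}$, realized by the right action on $T$ obtained from the algebra action by duality. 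Composing these two identifications yields an anti-isomorphism between $\textup{Aut}_{\mf{C}_{es}}(\mc{H},\tau^d)$ and the group of essentially strict algebra automorphisms, which proves the second assertion.

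For the first assertion, I need to pin down which subgroup of essentially strict algebra automorphisms corresponds under the right action to $\textup{Out}_T(\mu)_e$. By Lemma \ref{lem:stand} I may assume $(\mc{R},m)$ is standard; Proposition \ref{prop:musym}(iv) then gives $\textup{Out}_T(\mu) = \Omega_X^* \rtimes \Omega_0^Y$, where $\Omega_X^* = P(R_0^\vee)/Y \subset T$ acts by translation and $\Omega_0^Y$ by linear diagram symmetries that automatically fix $e$. Thus a translation by $\delta \in \Omega_X^*$ fixes $e$ iff $\delta = e$, and $\textup{Out}_T(\mu)_e = \Omega_0^Y$. On the algebra side, the explicit description in the proof of Proposition \ref{prop:musym}(v) shows that essentially strict automorphisms have the form $N_s \mapsto N_{l(s)}$ and $N_\omega \mapsto \delta(\omega) N_\omega$ with $l \in \Omega_0^X$ and $\delta \in \Omega_X^*$, so the strict ones are exactly those with trivial character $\delta$, forming the subgroup $\Omega_0^X$ which is canonically matched with $\Omega_0^Y$ via the right action. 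This yields the required restriction.

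The main obstacle I anticipate is the bookkeeping of what strictness means on each side, and in particular verifying that strict algebra automorphisms correspond precisely to the $e$-fixing elements of $\textup{Out}_T(\mu)$. Once this matching is confirmed by inspecting the explicit form of the identification in Proposition \ref{prop:musym}(v) in the standard case (to which the semi-standard case reduces by Lemma \ref{lem:stand}), the remainder of the argument is a purely formal composition of the two prior propositions.
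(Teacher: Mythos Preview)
Your proposal is correct and follows the same approach as the paper, which simply writes ``Apply Proposition \ref{prop:musym}(v)'' (implicitly combined with the immediately preceding Proposition \ref{prop:aut}). Your version is more detailed in that you explicitly verify the matching of strictness on both sides via the decomposition $\textup{Out}_T(\mu)=\Omega_X^*\rtimes\Omega_0^Y$ in the standard case, but this is exactly the content the paper leaves to the reader.
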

\begin{proof}
Apply Proposition \ref{prop:musym}(v).
\end{proof}
These results show that the spectral correspondences of semi-standard spectral
automorphism can be refined to a Plancherel measure preserving automorphism in view
of the following result which follows trivially from the definitions.
\begin{prop}\label{prop:bij}
Admissible algebra automorphisms $\sigma\in\textup{Aut}_{adm}(\mc{H})$
preserve both the $*$-operator on $\mc{H}$ and the trace $\tau^d$ of $\mc{H}$.
In particular such an automorphism induces a Plancherel measure preserving homeomorphism
of $\mf{S}$.
\end{prop}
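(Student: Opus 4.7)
\emph{Proof plan.} The plan is to verify the two invariance statements directly on the distinguished basis $\{N_w\}_{w\in W}$, and then to deduce the statement about the Plancherel measure from standard Hilbert algebra / $C^*$-algebra considerations.

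First I would pin down the possible values of the character $\epsilon$. Applying $\sigma$ to the quadratic relation $(N_s-v(s))(N_s+v(s)^{-1})=0$ (using that $\sigma$ is $\mb{L}$-linear, so fixes $v(s)$) and comparing with the corresponding relation for $N_{l(s)}$, the coefficient of $1$ in the resulting identity forces $\epsilon(s)^{2}=1$, hence $\epsilon(s)\in\{\pm 1\}$ for each $s\in S$. Thus $\epsilon|_{W^a}$ takes values in $\{\pm 1\}$. On $\Omega_X$ the restriction is an abstract character; under the standing (semisimple) assumption $\Omega_X\approx X/Q(R_0)$ is finite, so any such character has values in roots of unity. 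In all relevant cases this yields $|\epsilon(w)|=1$ for every $w\in W$. With this in hand, $*$-preservation is immediate on basis elements: one has
\begin{equation*}
\sigma(N_w^*)=\sigma(N_{w^{-1}})=\epsilon(w^{-1})N_{l(w)^{-1}}=\epsilon(w)^{-1}N_{l(w)^{-1}}=\overline{\epsilon(w)}\,N_{l(w)^{-1}}=\sigma(N_w)^*,
\end{equation*}
and the general case follows by $\mathbb{C}$-linearity.

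Next I would verify trace invariance. Since $\sigma$ is an algebra \emph{automorphism} of $\mc{H}$, it permutes the basis $\{N_w\}_{w\in W}$, forcing $l\colon W\to W$ to be a group automorphism; in particular $l(w)=e$ iff $w=e$. For $h=\sum_w c_wN_w\in\mc{H}$ we then compute
\begin{equation*}
\tau^d(\sigma(h))=d\cdot[\text{coefficient of }N_e\text{ in }\sigma(h)]=d\,c_{l^{-1}(e)}\,\epsilon(l^{-1}(e))=d\,c_e\,\epsilon(e)=\tau^d(h).
\end{equation*}

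Finally, I would deduce the spectral statement. Specialising at any $\mb{v}>1$, the automorphism $\sigma$ restricts to a trace- and $*$-preserving algebra automorphism of the Hilbert algebra $\mc{H}_\mb{v}$. Such an automorphism extends by continuity to a $*$-automorphism $\hat\sigma$ of the $C^*$-completion $\mc{C}_\mb{v}$, and the induced self-homeomorphism of $\mf{S}_\mb{v}=\widehat{\mc{C}_\mb{v}}$ transports the spectral measure of $\tau^d_\mb{v}$ to the spectral measure of $\tau^d_\mb{v}\circ\hat\sigma^{-1}=\tau^d_\mb{v}$, i.e.\ it preserves $\nu_{Pl,\mb{v}}$. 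Because $\sigma$ is defined uniformly on the distinguished basis, it is compatible with the scaling isomorphisms of Theorem~\ref{thm:cont}, so these fiberwise homeomorphisms glue via the trivialisation of the generic $C^*$-bundle $\mc{C}$ (Theorem~\ref{thm:bundle}) to give a Plancherel-preserving self-homeomorphism of the total space $\mf{S}$. The only delicate (and minor) point in the whole argument is the assertion $|\epsilon(w)|=1$, which in the semisimple setting is automatic; the rest is elementary basis bookkeeping together with standard Hilbert algebra / $C^*$-algebra facts.
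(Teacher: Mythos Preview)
Your proposal is correct, and in fact the paper gives no proof at all beyond the remark that the result ``follows trivially from the definitions''. What you have written is a careful expansion of that omitted argument, following exactly the line the author has in mind: check $*$- and $\tau^d$-invariance on the distinguished basis, then pass to the $C^*$-completion.

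Two small remarks. First, the fact that $\epsilon(s)\in\{\pm 1\}$ for $s\in S$ does not require the Hecke relation: since $\epsilon$ is a linear character of $W$ and $s^2=e$, one has $\epsilon(s)^2=\epsilon(e)=1$ immediately. Second, your sentence ``$\sigma$ permutes the basis $\{N_w\}$'' is slightly imprecise (it sends $N_w$ to a \emph{scalar multiple} of a basis element), but the intended conclusion---that $l$ is a bijection of $W$, hence $l(e)=e$---is of course correct, since $\{\epsilon(w)N_{l(w)}\}_{w\in W}$ must again be an $\mb{L}$-basis. Your caveat about needing $|\epsilon(w)|=1$ on $\Omega_X$ (automatic in the semisimple setting, where $\Omega_X$ is finite) is exactly the right point to flag; the paper is tacitly working in that context throughout this subsection.
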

\begin{cor}
If $\phi\in\textup{Aut}_{\mf{C}_{es}}(\mc{H})$ corresponds to
$\sigma\in\textup{Aut}_{es}(\mc{H})$ (i.e. the action of $\sigma$ on $T$ is
equal to $\phi_T$) then
$\sigma:\mc{H}\leadsto\mc{H}$ induces a Plancherel measure preserving automorphism
on $\mf{S}$ refining the spectral correspondence of $\phi^\sigma$.
\end{cor}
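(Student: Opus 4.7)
The plan is to extend the argument of Proposition~\ref{prop:bij} beyond admissible automorphisms to cover all of $\textup{Aut}_{es}(\mc{H})$. The key observation I would first establish is that, even in the essentially strict case, $\sigma$ still permutes the distinguished basis up to a scalar: writing $w = \omega s_{i_1}\cdots s_{i_k}$ in normal form (with $\omega \in \Omega_X$ of length zero and $s_{i_1}\cdots s_{i_k}$ a reduced expression), the relation $N_w = N_\omega N_{s_{i_1}}\cdots N_{s_{i_k}}$ together with $\sigma(N_s) = \epsilon(s) N_{l(s)}$ and $\sigma(N_\omega) = \delta(\omega) N_{l(\omega)}$ yields $\sigma(N_w) = \eta(w) N_{l(w)}$ with $\eta(w) := \delta(\omega) \prod_j \epsilon(s_{i_j}) \in \mathbb{C}^\times$.

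Granted this formula, trace preservation is straightforward: since $l\colon W\to W$ is a bijection, $l(w)=e$ iff $w=e$ and $\eta(e)=1$, so $\tau^d\circ\sigma = \tau^d$. For the $*$-preservation I would verify $\eta(w^{-1}) = \overline{\eta(w)}$, using two inputs. First, the signatures $\epsilon(s) \in \{\pm 1\}$ are real and moreover $W$-conjugation invariant, a fact forced on us by the quadratic Hecke relation $(N_s - v(s))(N_s + v(s)^{-1}) = 0$ being preserved under $\sigma$ (which determines $\epsilon(s)$ as a function of $m_S(s)$ and $m_S(l(s))$, both of which are $W$-invariant). Second, $\delta \in \Omega_X^*$ is unitary since $\Omega_X = X/Q(R_0)$ is finite in the semisimple setting, so $\delta(\omega^{-1}) = \overline{\delta(\omega)}$. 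Rewriting $w^{-1}$ in normal form as $\omega^{-1}(\omega s_{i_k}\omega^{-1})\cdots(\omega s_{i_1}\omega^{-1})$ and combining these two inputs then yields $\eta(w^{-1}) = \overline{\eta(w)}$.

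With preservation of $*$ and $\tau^d$ in hand, the rest proceeds exactly as in the proof of Proposition~\ref{prop:bij}: $\sigma$ is an isometry of the Hilbert algebra $\mc{H}_\mb{v}$, extends to a $*$-automorphism of the $C^*$-completion $\mc{C}_\mb{v}$ for each $\mb{v}>1$, and fits together across $\mb{v}$ into a $*$-automorphism of the bundle $\mc{C}$ of Theorem~\ref{thm:bundle}, giving a homeomorphism $\hat\sigma$ of $\mf{S}$ that preserves the Plancherel measure $\nu_{Pl}$ (being the spectral measure of the preserved trace). The refinement assertion is then essentially a tautology from the construction of the bijection $\sigma \leftrightarrow \phi_T^\sigma$ in Proposition~\ref{prop:musym}(v): under the identification $\mc{Z} = \mc{A}^{W_0} = \Lb[X]^{W_0}$, $\sigma$ acts by pullback along $\phi_T^\sigma$, so for $\pi \in \mf{S}$ with central character $W_0(t)$, the character $\pi\circ\sigma$ has central character $W_0(\phi_T^\sigma(t)) = \phi_Z^\sigma(W_0(t))$. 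Hence $p_Z^{temp}\circ \hat\sigma = \phi_Z^\sigma \circ p_Z^{temp}$, and $\hat\sigma$ refines the correspondence attached to $\phi^\sigma$.

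The main obstacle I expect is the $*$-compatibility check, specifically the $W$-conjugation invariance of $\epsilon$, since the definition of ``essentially strict'' only prescribes $\epsilon$ on $S$ and does not at first sight enforce this symmetry. The resolution is the observation above that the Hecke quadratic relations, once $\sigma$ is demanded to be an algebra homomorphism, pin down $\epsilon(s)$ in terms of the $W$-invariant data $(m_S(s), m_S(l(s)))$; every other step is either routine bookkeeping or an immediate appeal to earlier results.
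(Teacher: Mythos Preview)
Your proof is correct, but you are working harder than necessary because of a misconception in your opening sentence. You write that you need to ``extend the argument of Proposition~\ref{prop:bij} beyond admissible automorphisms to cover all of $\textup{Aut}_{es}(\mc{H})$,'' but in fact the computation you carry out in your first paragraph shows precisely that every essentially strict automorphism \emph{is} admissible: once you have $\sigma(N_w)=\eta(w)N_{l(w)}$ with $\eta(w)=\delta(\omega)\prod_j\epsilon(s_{i_j})$, the consistency of $\sigma$ with the braid relations and with conjugation by $\Omega_X$ forces $\eta$ to be a linear character of $W$, which is exactly the defining condition for admissibility. (Indeed, in the standard case the proof of Proposition~\ref{prop:musym}(v) already records that essentially strict automorphisms are automatically strict on the $N_s$, so $\epsilon\equiv 1$ there.) The paper's intended argument is therefore a one-liner: $\textup{Aut}_{es}(\mc{H})\subset\textup{Aut}_{adm}(\mc{H})$, so Proposition~\ref{prop:bij} applies verbatim, and the refinement claim is the tautology you describe in your last paragraph.

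Two minor remarks on your explicit verification. First, your route to $\eta(w^{-1})=\overline{\eta(w)}$ via $W$-conjugation invariance of $\epsilon$ is fine but roundabout: once $\eta$ is a character of $W$ with values in roots of unity (semisimple case), $\eta(w^{-1})=\eta(w)^{-1}=\overline{\eta(w)}$ is immediate. Second, deducing the conjugation invariance of $\epsilon$ from the quadratic Hecke relation only pins down $\epsilon(s)$ when $m_S(s)\neq 0$; the cleaner source is the braid relations together with $N_\omega N_s N_\omega^{-1}=N_{\omega s\omega^{-1}}$, which force $\epsilon$ to be constant on $W$-conjugacy classes in $S$ regardless of the parameter values.
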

\subsubsection{Spectral isomorphisms}\label{par:speciso}
Let $\mc{R}$ be a semisimple root datum.
The $\mu$ function has certain important symmetries with respect
to the cocharacter $m$. Consider the group $\textup{Iso}=\textup{Iso}(\mc{R})$ of
automorphism of $\mc{Q}_c=\mc{Q}_c(\mc{R})$ generated by the
following involutions $\eta^\mb{c}$ associated with the
$W$-conjugation classes $\mb{c}\subset S$ of affine simple reflections.
If $\mb{c}\cap S_0\not=\emptyset$ we define
(using notations as in Theorem \ref{thm:ber})
$\eta^\mb{c}(v(s))=v(s^\prime)^{-1}$
if $s\in\mb{c}$ or $s^\prime\in\mb{c}$, and $\eta^\mb{c}(v(s))=v(s)$ else.
For conjugacy classes $\mb{c}$ such that $S_0\cap\mb{c}=\emptyset$
we define $\eta^\mb{c}$ by $\eta^\mb{c}(v(s))=v(s)^{-1}$
if $s\in\mb{c}$ and $\eta^\mb{c}(v(s))=v(s)$ else.
We denote the action of $\eta\in\textup{Iso}$ on the
set of cocharacters of $\mc{Q}_c$ by $m\to\eta(m)$.
\begin{prop}
For all $W$-conjugacy classes $\mb{c}$ of affine simple reflections 
we can define an essentially strict spectral transfer isomorphism
$\phi^{\mb{c}}:(\mc{H}(\mc{R},m),\tau^d)\leadsto(\mc{H}(\mc{R},\eta^\mb{c}(m)),\tau^d)$
as follows.
If $\mb{c}\cap S_0\not=\emptyset$ we put $\phi_T^{\mb{c}}=\textup{id}_T$.
If $\mb{c}\cap S_0=\emptyset$ we put $\phi_T^{\mb{c}}(t)=s_\mb{c}t$, where
$s_\mb{c}\in T$ is the $W_0$-invariant element defined by
$\alpha(s_\mb{c})=-1$ if $\alpha\in R_0$ is such that $s_\alpha^\prime\in\mb{c}$,
and $\alpha(s_\mb{c})=1$ else.
\end{prop}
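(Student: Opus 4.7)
The plan is to verify that $\phi_T^{\mb{c}}$ as defined satisfies conditions (T1), (T3), (T4), and the essentially strict relaxation of (T2). In both cases the image equals $T$ itself, so $L=T$, $P=\emptyset$, $K_L^n=\{e\}$, and $\phi_T^{\mb{c}}$ is an isomorphism of tori; (T1) is then immediate, and (T4) follows since $s_\mb{c}$ is $W_0$-invariant by construction, so translation by $s_\mb{c}$ commutes with the $W_0$-action. In case 1 one has $\phi_T^{\mb{c}}(e)=e$, giving a strict STM; in case 2 $\phi_T^{\mb{c}}(e)=s_\mb{c}\neq e$ in general, and we must use the essentially strict notion. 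Since $\eta^{\mb{c}}$ is an involution, the same construction applied to $(\mc{R},\eta^{\mb{c}}(m))$ supplies the inverse in $\mf{C}_{es}$.

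The central step is (T3): $(\phi_T^{\mb{c}})^{*}\mu_{\mc{R},\eta^{\mb{c}}(m),d} = a\,\mu_{\mc{R},m,d}$ for some constant $a$. First I would unpack the action of $\eta^{\mb{c}}$ on the parameters $v(s)$ to compute how $m_\pm$ and $m_W(w_0)$ transform under $\eta^\mb{c}$ using (\ref{eq:m}). In case 1, on the $W_0$-orbit $[\alpha]$ of a finite simple root whose reflection lies in $\mb{c}$, one finds $m_{+}\mapsto -m_{+}$ with $m_{-}$ unchanged, and $m_W(w_0)=\sum_{\alpha\in R_{0,+}}m_R(\alpha^\vee)$ shifts by $-2|[\alpha]\cap R_{0,+}|\,m_+(\alpha)$. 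In case 2, for each $\alpha$ with $s'_\alpha\in\mb{c}$ one has the swap $m_+\leftrightarrow m_-$, while $m_R(\alpha^\vee)$ is fixed for every $\alpha\in R_0$ (since $s_\alpha\notin\mb{c}$), so $m_W(w_0)$ is unchanged. The effect on $\mu$ is then an elementary $c$-factor computation using (\ref{eq:formc}). In case 2, the pullback by translation by $s_\mb{c}$ sends $\alpha^{-1}\mapsto\alpha(s_\mb{c})\alpha^{-1}$, and for $\alpha(s_\mb{c})=-1$ the simultaneous swap $m_+\leftrightarrow m_-$ and $\alpha^{-1}\mapsto -\alpha^{-1}$ leaves each factor $c_{m,\alpha}$ invariant; for $\alpha(s_\mb{c})=+1$ nothing changes, yielding $a=1$. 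In case 1, the identity $c_m c_m^{w_0}=\prod_{\alpha\in R_0}c_{m,\alpha}$ together with a direct verification gives $c_{m',\alpha}c_{m',-\alpha}/(c_{m,\alpha}c_{m,-\alpha})=v^{4m_+(\alpha)}$ on the affected orbit, so $c_m c_m^{w_0}$ scales by $v^{4|[\alpha]\cap R_{0,+}|\,m_+(\alpha)}$, exactly cancelling the shift in $v^{-2m_W(w_0)}$; again $a=1$.

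Finally, for the essentially strict condition in case 2: $L=T$ forces $J=\emptyset$ and $C^{\vee,J}=C^\vee$, and since $\phi_T^{\mb{c}}$ is an isomorphism the group $W^{*}$ equals the full dual affine Weyl group $W^{\vee}$. A lift $\tilde s_\mb{c}\in \bar{C}^{\vee}$ of $s_\mb{c}$ satisfies $\alpha(\tilde s_\mb{c})\in\{0,\tfrac12\}$ for all $\alpha\in R_0$, and the stabilizer of $\tilde s_\mb{c}$ in $W^{*}$ contains reflections in enough affine hyperplanes of $R_m^{(1)}$ to be a finite reflection group of maximal rank, which is the criterion for $\tilde s_\mb{c}$ to be a special vertex of $C^{\vee}$. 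The main obstacle is the bookkeeping in step (T3), in particular the exact cancellation in case 1 between the shifts in $v^{-2m_W(w_0)}$ and $c_m c_m^{w_0}$; once the transformation laws for $m_\pm$ are tabulated this reduces to routine algebraic manipulation of the $c$-factors.
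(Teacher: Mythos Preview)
Your proposal is correct and follows exactly the approach of the paper: the paper's proof consists of the single sentence ``We need to verify (T3) (the other conditions being trivially satisfied) and this is an easy computation,'' and you have carried out that computation in detail, correctly tracking how $m_\pm$ transform under $\eta^{\mb{c}}$ (cf.\ Remark \ref{rem:D8}) and verifying the exact cancellation between the shift in $v^{-2m_W(w_0)}$ and the change in $c_m c_m^{w_0}$ in case~1, and the invariance of each $c_{m,\alpha}$ under the simultaneous sign flip and $m_+\leftrightarrow m_-$ swap in case~2. Your treatment of the essentially strict condition is more than the paper offers, and is adequate here since $L=T$ forces $J=\emptyset$, $\mf{t}_P=\{0\}$, so the condition reduces to $0$ being a special vertex of $C^\vee$.
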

\begin{proof}
We need to verify (T3) (the other conditions being trivially satisfied)
and this is an easy computation.
\end{proof}
These spectral isomorphisms too are given by admissible algebra isomorphisms.
If $\mb{c}\cap S_0\not=\emptyset$ we define
$\sigma^\mb{c}\in\textup{Hom}_{adm}(\mc{H}(\mc{R},m),\mc{H}(\mc{R},\eta^\mb{c}(m)))$
by $\sigma^\mb{c}(N_s)=-N_{s^\prime}$ for $s\in\mb{c}$ or $s^\prime\in\mb{c}$
and $\sigma^\mb{c}(N_s)=N_s$ else.
For conjugacy classes $\mb{c}$ such that $\mb{c}\cap S_0=\emptyset$
we define
$\sigma^\mb{c}\in\textup{Hom}_{adm}(\mc{H}(\mc{R},m),\mc{H}(\mc{R},\eta^\mb{c}(m)))$
by $\sigma^\mb{c}(N_s)=-N_s$ for $s\in\mb{c}$ and $\sigma^\mb{c}(N_s)=N_s$ else.
We remark that if $\mb{c}$ is such that $\mb{c}\cap S_0\not=\emptyset$ and
$\mb{c}\not=\mb{c}^\prime$ then $\sigma^\mb{c}$ is not essentially strict and
does not send $\mc{A}$ to $\mc{A}$.
However, the induced morphism $\sigma^\mb{c}_\mc{Z}=\sigma^\mb{c}|_\mc{Z}$
on the center $\mc{Z}$ of $\mc{H}(\mc{R},m)$ is equal to $\phi^\mb{c}_Z$ in all cases.
Hence we obtain:
\begin{prop} The algebra isomorphism $\sigma^\mb{c}$ defines a Plancherel measure
preserving homeomorphim
\begin{equation}
\sigma^{\mb{c},temp}_Z:\mf{S}(\mc{R},\eta^\mb{c}(m))\to\mf{S}(\mc{R},m)
\end{equation}
refining the spectral correspondence of the spectral
transfer morphism $\phi^\mb{c}$.
\end{prop}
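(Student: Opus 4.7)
The plan is to reduce the statement to a direct application of the Hilbert algebra philosophy underlying Proposition \ref{prop:bij}, and then to verify that the resulting homeomorphism is compatible with $\phi^\mb{c}_Z$ on central characters.

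First, I would verify that $\sigma^\mb{c}:\mc{H}(\mc{R},m)\to\mc{H}(\mc{R},\eta^\mb{c}(m))$ is a well-defined $\mathbb{L}$-algebra isomorphism by checking the quadratic and braid relations. The relevant relations are only those involving a generator $N_s$ with $s\in\mb{c}$ or $s\in\mb{c}'$; in both cases ($\mb{c}\cap S_0\neq\emptyset$ and $\mb{c}\cap S_0=\emptyset$) a short case analysis shows that the images of the defining relations for $(\mc{R},m)$ are precisely the defining relations for $(\mc{R},\eta^\mb{c}(m))$, by definition of $\eta^\mb{c}$ on the cocharacter $m$.

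Second, I would observe that $\sigma^\mb{c}$ permutes the distinguished basis $\{N_w\}_{w\in W}$ up to signs, and sends $N_e$ to $N_e$. This immediately gives $\tau^1\circ\sigma^\mb{c}=\tau^1$ and hence $\tau^d\circ\sigma^\mb{c}_\mb{v}=\tau^d_\mb{v}$ for every $\mb{v}>1$ (since $d$ is unchanged). The same basis description, combined with the identity $N_w^*=N_{w^{-1}}$, shows that $\sigma^\mb{c}$ commutes with the involution $*$. Thus, for each $\mb{v}>1$, $\sigma^\mb{c}$ is a $*$-isomorphism of Hilbert algebras $\mc{H}_\mb{v}(\mc{R},m)\isom\mc{H}_\mb{v}(\mc{R},\eta^\mb{c}(m))$, and therefore extends to a $*$-isomorphism of the $C^*$-completions $\mc{C}_\mb{v}$. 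This induces a homeomorphism of the tempered spectra $\mf{S}_\mb{v}(\mc{R},\eta^\mb{c}(m))\to \mf{S}_\mb{v}(\mc{R},m)$ transporting the spectral measure $\nu_{Pl,\mb{v}}$ of $\tau^d_\mb{v}$, i.e.\ preserving the Plancherel measure. Since the construction is continuous in $\mb{v}$ via Theorem \ref{thm:bundle}, the $\mb{v}$-wise homeomorphisms fit into a Plancherel preserving homeomorphism $\sigma^{\mb{c},temp}_Z:\mf{S}(\mc{R},\eta^\mb{c}(m))\to\mf{S}(\mc{R},m)$.

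Finally, to see that $\sigma^{\mb{c},temp}_Z$ refines the spectral correspondence of $\phi^\mb{c}$, one uses the commuting diagram (\ref{cd}) defining the central character map $p^{temp}_Z$: it suffices to verify that $\sigma^\mb{c}$ restricts to an isomorphism of centers $\mc{Z}(\mc{H}(\mc{R},\eta^\mb{c}(m)))\isom\mc{Z}(\mc{H}(\mc{R},m))$ which, on spectra, equals $\phi^\mb{c}_Z$. The hard part will be the case $\mb{c}\cap S_0\neq\emptyset$ with $\mb{c}\neq\mb{c}'$, where $\sigma^\mb{c}$ does \emph{not} preserve the Bernstein subalgebra $\mc{A}$, so one cannot simply read off its action on the $\theta_x$. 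I would handle this by induction on the length of $w\in W$ together with the Bernstein cross-relation (\ref{eq:ber}): one verifies that modulo the ideal generated by commutators in $\mc{H}_0$, the image $\sigma^\mb{c}(\theta_x)$ agrees with $\theta_x$ (resp.\ $\chi(x)\theta_x$ with $\chi(x)=\alpha(s_\mb{c})^{\langle x,\alpha^\vee\rangle}$ in the sign-twisted case). After $W_0$-symmetrization this matches $\phi^\mb{c}_Z$ precisely; i.e.\ the identity map on $\mathrm{Spec}(\mc{A}^{W_0})$ for $\mb{c}\cap S_0\neq\emptyset$ and translation by $s_\mb{c}$ for $\mb{c}\cap S_0=\emptyset$. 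Once this identification is in place, Proposition \ref{prop:bij} applies verbatim to give the refinement statement.
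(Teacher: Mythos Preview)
Your proposal is correct and follows essentially the same route as the paper: the paper derives the proposition directly from Proposition~\ref{prop:bij} (admissible isomorphisms preserve $*$ and $\tau^d$, hence the Plancherel measure) together with the assertion, stated in the text immediately preceding the proposition, that $\sigma^\mb{c}|_{\mc{Z}}=\phi^\mb{c}_Z$ in all cases. Your write-up simply unpacks these two ingredients in more detail; in particular, the paper does not spell out the verification of $\sigma^\mb{c}|_{\mc{Z}}=\phi^\mb{c}_Z$ in the delicate case $\mb{c}\cap S_0\neq\emptyset$, $\mb{c}\neq\mb{c}'$, whereas you sketch an inductive argument via the Bernstein relation.
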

\begin{rem}\label{rem:D8}
In terms of $\Sigma_s$ the action $\textup{Iso}(\mc{R},m)$ is given
as follows. If $\mb{c}\cap S_0\not=\emptyset$ then $\eta^\mb{c}$ replaces
the parameter $m_+(\mb{c})$ by $-m_+(\mb{c})$ and leaves the rest of the spectral
diagram unchanged. If $\mb{c}\cap S_0=\emptyset$ then $\eta^\mb{c}$ interchanges
the role of $m_+(\mb{c})$ and $m_-(\mb{c})$, and leaves the rest of the spectral
diagram unchanged. In particular, if $\mb{c}\not=\mb{c}^\prime$ then the two
involutions $\eta^\mb{c}$ and $\eta^{\mb{c^\prime}}$ acting on the corresponding
component of type $\textup{C}_n^{(1)}$ of the spectral diagram generate a dihedral group
of order $8$.
\end{rem}
\subsubsection{Spectral covering morphisms}\label{subsub:cov}
We call an STM $\phi$ a spectral covering morphism if
$\phi_T$ is surjective, or otherwise said, if $\textup{cork}(\phi)=0$. 
\begin{prop}
Assume that $(\mc{R}_i,m_i)$ ($i=1,2$) both are semi-standard and let
$\mc{H}_i=\mc{H}(\mc{R}_i,m_i)$. Let $\phi:(\mc{H}_1,\tau^{d_1})\leadsto(\mc{H}_2,\tau^{d_2})$
be a (not necessarily strict) spectral covering morphism. 
After replacing $(\mc{H}_1,\tau^{d_1})$ by an isomorphic
object in $\mf{C}_{es}$ if necessary, and up to the action of
$\textup{Out}_T(\mu_1)$ (see Proposition \ref{prop:aut}), 
$\phi$ is represented by 
a finite morphism $\phi_T:T_1\to T_2$ of tori associated to
a sublattice $X_2\subset X_1$ of maximal rank such that
$\mc{R}_1=(X_1,R_{1,0},Y_1,R_{1.0}^\vee)$
and $\mc{R}_2=(X_2,R_{2,0},Y_2,R_{2,0}^\vee)$ where $R_{1,m_1}=R_{2,m_2}$ and
$m_{1,R_1}^\vee=m_{2,R_2}^\vee$. In this situation the labelled Dynkin diagram
of $\Sigma_s(\mc{R}_1,m_1)$ can be identified with the underlying labelled Dynkin diagram
of $\Sigma_s(\mc{R}_2,m_2)$ while $\Omega_{Y_1}^\vee\subset\Omega_{Y_2}^\vee$.
In addition, $\phi$ is  essentially strict.
\end{prop}
\begin{proof}
Let $\mc{H}_i=\mc{H}(\mc{R}_i,m_i)$ and let $\mc{R}_i=(X_i,R_{i,0},Y_i,R_{i,0}^\vee)$.
Clearly $X_2\subset X_1$ via the morphism of algebraic tori $T_1\to T_2$ underlying
the affine morphism $\phi_T$. By (T3) we have an isomorphism
of reflection groups $W(R_{1,0})\approx W(R_{2,0})$ via the map $(D\phi_T)_p$
where $p\in T_1$ is such that $\phi_T(p)=e$
(using the argument given in the proof of Proposition \ref{prop:z}).
Hence $p\in T_1$ is a $W(R_{1,0})$-invariant point, and there exists an 
essentially strict spectral transfer automorphism of $\mc{H}_1$ mapping $e\in T_1$ to $p$.
Therefore we may assume without loss of generality that $\phi_T$ is
a morphism of algebraic tori. From (T3) it also follows that
Proposition \ref{prop:musym} applies both to $\mc{H}_1$ and
$\mc{H}_2$.
We conclude that $d(\phi_T)_e$ induces a monomorphism
$W(R_{m_1}^{(1)})\rtimes\Omega_{Y_1}^\vee\to W(R_{m_2}^{(1)})\rtimes\Omega_{Y_2}^\vee$.
We have $R_{2,0}\subset X_2\subset X_1$ and there exists a
$W(R_{1,0})$-invariant function $n^\phi: R_{1,0}\to\{\frac{1}{2},1,2\}$ such that
$R_{2,0}=\{n^\phi(\alpha)\alpha\mid\alpha\in R_{1,0}\}$. If $n^\phi(\alpha)=\frac{1}{2}$
then $\alpha$ belongs to an irreducible component of $R_{1,0}$ of type
$\textup{C}_n$ (with $n\geq 1$) whose weight lattice is contained in $X_2$. Hence
by Proposition \ref{prop:Rmcan} 
there is a corresponding direct summand $\mc{C}^1$ of $\mc{R}_1$ with
root system $R_0^1$ of type $\textup{C}_n $ and $X^1=P(R_0^1)$, and a direct summand 
$\mc{C}^2$ of $\mc{R}_2$ with root system $R_0^2$ of type $\textup{B}_n$
and lattice $X^2=X^1$. By (T3) we see that $m_{\mc{C}^2,-}=m_{\mc{C}^2,+}$,
and the tensor factor $\mc{H}(\mc{C}^1,m_{\mc{C}^1})$ of $\mc{H}_1$ is isomorphic
(as algebras)
to the tensor factor $\mc{H}(\mc{C}^2,m_{\mc{C}^2})$ of $\mc{H}_2$ in a
sense compatible with the STM.
Observe in particular that $R_m^1=R_m^2$ in this situation.
If on the other hand
$n^\phi(\alpha)=2$ for some $\alpha\in R_{1,0}$ then $\alpha$
belongs to an irreducible component $R^1_0$ of $R_{1,0}$ of type $\textup{B}_n$.
Using (T3) one checks that the only possibility is in this
case that $X^2=X^1=X_1\cap\mathbb{R}R^1_0=Q(R^1_0)$, and again we
have corresponding isomorphic tensor factors in $\mc{H}_1$ and
$\mc{H}_2$ with $R_m^1=R_m^2$.

Hence we may assume that $R_{1,0}=R_{2,0}$. By (T3) it is clear that
we can replace $(\mc{H}_1,\tau^{d_1})$ by an isomorphic object of $\mf{C}$
such that $m_{1,+}= m_{2,+}$ and $m_{1,-}= m_{2,-}$, and the remaining assertions
easily follow.
\end{proof}
\begin{cor} If $\phi:(\mc{H}_1,\tau^{d_1})\leadsto(\mc{H}_2,\tau^{d_2})$ is a not necessarily 
strict spectral covering morphism
with both $(\mc{R}_i,m_i)$ ($i=1,2$) are semi-standard, then $\phi$ is, after replacing
$(\mc{H}_1,\tau^{d_1})$ by an isomorphic object of $\mf{C}$ if necessary,
given by a (strict) algebra embedding
$\sigma:\mc{H}_2\hookrightarrow\mc{H}_1$ of $\mc{H}_2$ corresponding to an 
isogenous sub-root datum of $\mc{R}_1$ and $d_1d_2^{-1}\in\mathbb{Q}^\times$.
\end{cor}
\begin{proof}
This follows from the two results above in combination with (\ref{eq:ominv}).
\end{proof}
\begin{cor}
Given a spectral covering morphism $\phi:(\mc{H}_1,\tau^{d_1})\leadsto(\mc{H}_2,\tau^{d_2})$
where both $(\mc{R}_i,m_i)$ ($i=1,2$) are semi-standard,
there exists an admissible algebra isomorphism $\sigma:\mc{H}_2\leadsto\mc{H}\subset\mc{H}_1$
of $\mc{H}_2$ to a subalgebra of $\mc{H}_1$ whose embedding is a strict algebra morphism.
The branching of tempered representations of
$\mc{H}_1$ via $\sigma$ is a refinement of the spectral
correspondence of $\phi$.
\end{cor}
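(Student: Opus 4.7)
The plan is to bootstrap from the preceding corollary, which, after a possible replacement of $(\mc{H}_1,\tau^{d_1})$ by an isomorphic object $(\tilde{\mc{H}}_1,\tau^{\tilde{d}_1})$ in $\mf{C}$, produces a strict algebra embedding $\sigma':\mc{H}_2\hookrightarrow\tilde{\mc{H}}_1$ representing $\phi$. The first key step is to observe that this replacement is implemented by an isomorphism in $\mf{C}$ which, by the analysis of paragraph \ref{par:speciso} (together with Proposition \ref{prop:aut}), is realized on the algebra level by an \emph{admissible} algebra isomorphism $\tau:\mc{H}_1\xrightarrow{\sim}\tilde{\mc{H}}_1$. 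Note that the spectral isomorphisms $\phi^{\mb{c}}$ generating the $\textup{Iso}$-action are admissible but not in general essentially strict, which is precisely the reason admissibility (rather than strictness) is the appropriate level of generality for $\sigma$.

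Next I would set $\sigma:=\tau^{-1}\circ\sigma':\mc{H}_2\to\mc{H}_1$ and $\mc{H}:=\sigma(\mc{H}_2)$. Since both $\tau^{-1}$ (admissible) and $\sigma'$ (strict, hence admissible) send each standard basis element to a signed standard basis element, so does $\sigma$, confirming that $\sigma$ is admissible. Moreover, writing $\sigma(N_w)=\epsilon(w)N_{l(w)}$ for some group homomorphism $l:W_2\to W_1$ and sign character $\epsilon$, the image $\mc{H}$ is the $\Lb$-span of $\{\pm N_{l(w)}:w\in W_2\}$, which coincides as a subspace with the strict subalgebra spanned by $\{N_{l(w)}\}$. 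Hence the inclusion $\mc{H}\subset\mc{H}_1$ is strict, as required.

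For the refinement assertion, Proposition \ref{prop:bij} gives that $\sigma$ induces a Plancherel measure-preserving homeomorphism between $\mf{S}_2$ and the tempered spectrum $\mf{S}(\mc{H})$. The strict inclusion $\mc{H}\hookrightarrow\mc{H}_1$ makes $\mc{H}_1$ a free $\mc{H}$-module of finite rank (corresponding to the isogeny of root data from the previous corollary), and hence restriction yields a branching correspondence between $\mf{S}_1$ and $\mf{S}(\mc{H})\cong\mf{S}_2$. Compatibility with the spectral correspondence of $\phi$ reduces to checking that the map on centers $\mc{Z}_1\supset\mc{Z}\xleftarrow{\sigma}\mc{Z}_2$ coincides with $\phi_Z$, which is immediate from the explicit description of $\phi_T$ as the finite morphism of tori dual to the lattice inclusion $X_2\hookrightarrow X_1$. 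Thus every branching pair $(\pi_1,\pi_2)$ satisfies $p_{Z,2}^{temp}(\pi_2)=\phi_Z^{temp}\circ p_{Z,1}^{temp}(\pi_1)$, which is the defining condition for membership in the fibered product $\mf{S}_{12}$ of Theorem \ref{thm:corr}.

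The step I expect to be the main obstacle is promoting this set-theoretic compatibility to the statement that branching genuinely \emph{refines} the spectral correspondence in the measure-theoretic sense; one must match the measure arising from branching multiplicities (weighted by formal degrees of constituents of $\pi_1|_\mc{H}$) against the measure $\nu$ on $\mf{S}_{12}$ produced by Theorem \ref{thm:corr}. This should come down to the fact, guaranteed by the previous corollary, that $d_1/d_2\in\mathbb{Q}^\times$ is exactly the scalar relating $\tau^{d_1}|_\mc{H}$ to $\sigma_*(\tau^{d_2})$, together with the standard behaviour of Plancherel measures under restriction along finite free extensions of Hilbert algebras.
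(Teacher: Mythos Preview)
Your approach is essentially the one the paper has in mind. Note that the paper gives no separate proof for this corollary; it is treated as an immediate consequence of the preceding corollary together with the observations in paragraphs \ref{par:specaut} and \ref{par:speciso} that every isomorphism in $\mf{C}$ (or $\mf{C}_{es}$) between semi-standard objects is realised on the algebra level by an admissible algebra isomorphism. Your reconstruction makes this explicit: pull the strict embedding $\sigma':\mc{H}_2\hookrightarrow\tilde{\mc{H}}_1$ back along the admissible isomorphism $\tau:\mc{H}_1\xrightarrow{\sim}\tilde{\mc{H}}_1$ that implements the replacement, and check that the composite $\sigma=\tau^{-1}\circ\sigma'$ is admissible with strictly embedded image.

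Two small comments. First, your argument that the image $\mc{H}=\sigma(\mc{H}_2)$ is \emph{strictly} embedded is slightly glib: identifying the $\Lb$-span of $\{\epsilon(w)N_{l(w)}\}$ with that of $\{N_{l(w)}\}$ shows they coincide as submodules, but one still needs that $l(W_2)\subset W_1$ carries the structure of an extended affine Coxeter subgroup compatible with $(S_1,S_{1,0},\Omega_{X_1})$. This is fine here because the admissible isomorphisms $\tau$ that actually arise (compositions of the $\sigma^{\mb{c}}$ of paragraph \ref{par:speciso} and the essentially strict automorphisms of Proposition \ref{prop:musym}(v)) all have underlying group map preserving enough of the Coxeter structure; it would be worth saying so. Second, for the refinement claim the paper is content with the observation (already made for each class of isomorphism in paragraphs \ref{par:specaut}--\ref{par:speciso}) that $\sigma|_{\mc{Z}_2}$ induces $\phi_Z$, together with Proposition \ref{prop:bij}. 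Your last paragraph about matching measures via $d_1/d_2\in\mathbb{Q}^\times$ is more than the paper asks for; the phrase ``refinement of the spectral correspondence'' here means only the set-theoretic compatibility with $\mf{S}_{12}$ that you establish in your third paragraph, not a full Plancherel identity.
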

\subsubsection{Spectral covering morphisms to non-semi-standard affine Hecke algebras}
\label{par:nonstand}
In the previous paragraph we looked at covering morphisms
$\phi:(\mc{H}_1,\tau^{d_1})\leadsto(\mc{H}_2,\tau^{d_2})$ with
$(\mc{H}_i,\tau^{d_i})$ both semi-standard. We found that essentially such covering STMs 
correspond to admissible algebra embeddings $\mc{H}_2\subset\mc{H}_1$. 
And by Proposition \ref{prop:aut}, 
the endomorphisms in $\mf{C}_{es}$ of a semisimple, semi-standard normalized affine 
Hecke algebra are automorphisms.

In general things are a bit more complicated. For instance, if $\mc{H}$ 
is the group algebra of a lattice $X$ (normalized by $d=1$ say), 
then there obviously exist spectral transfer endomorphisms $\mc{H}\leadsto\mc{H}$ 
which are not invertible. And also, if $(X,R_0,Y,R_0^\vee)$ is a root datum with $R_0$ nonempty, 
and $T=\textup{Spec}(\mathbf{L}[X])$,  then the identity map 
$T\to T$ defines an STM from $\mc{H}=\mathbf{L}[X]$ to $\mc{H}':=\mathbf{L}[X\rtimes W_0]$ which is 
a spectral covering map. Observe that the target affine Hecke algebra $\mc{H}'$ is not 
semi-standard here, and that the embedding $\mc{H}\subset\mc{H}'$ goes ``the wrong way" 
compared to the case of semi-standard spectral covering maps.
 
The more serious examples of non-semistandard spectral covering morphisms discussed below 
are important for later applications in \cite{Opd4}. Consider 
\begin{equation}
\phi:\mc{H}(\mc{R}^{D }_{sc},m^D)\leadsto\mc{H}(\mc{R}^{C }_{sc},m^C)
\end{equation}
where $\mc{R}^D_{sc}$ denotes the root datum of the simply connected cover of 
$\textup{SO}_{2n}$, i.e. the root datum with $R_0=R_0^D$ of type $\textup{D}_n$
($n\geq 3$), and $X$ the root lattice of type $\textup{D}_n$, and $\mc{R}^C_{sc}$ 
is the root datum with root system $R_0^C\supset R_0^D$ of type $\textup{C}_n$ 
containing $R_0^D$
as its set of short roots, with the same $X$, now viewed as
the root lattice of type $\textup{C}_n$. Here we have defined the co-character 
$m^D$ by $m^D_S(s)=1$ and $m^C$ by $m^C_S(s)=1$ if $s$ is a simple reflection
of a simple root in $F_0^D\cap F_0^C$, and $m^C_S(s)=0$ else.
We normalized the traces of these algebras by $1$.

Then $\mc{H}(\mc{R}^{C }_{sc},m^C)$ is
represented by the non-standard spectral diagram of Figure \ref{nonstand}.
\input{nonstand.TpX}
We define $\phi$ by $\phi_T=\textup{id}_T$, where $T$ is the torus
with character lattice $X$. This is obviously an STM.
Notice however that although $\phi_T$ is an isomorphism, $\phi$ itself is not.

On the other hand there exists an admissible algebra embedding
\begin{equation}
\sigma:\mc{H}(\mc{R}^{D }_{sc},m^D)\to\mc{H}(\mc{R}^{C }_{sc},m^C)
\end{equation}
mapping $\mc{H}(\mc{R}^{\textup{D} }_{sc},m^D)$ onto the subalgebra of
$\mc{H}(\mc{R}^{C }_{sc},m^C)$ which is invariant for the algebra
involution $\sigma^{\mb{c}}$ where $\mb{c}$ is the class of reflections
of the short roots of $\textup{B}_n=\textup{C}_n^\vee$.
In fact, one can define $\sigma$ on the
generators as follows. Assume that
\begin{equation}
F^D_0=(\alpha_1=e_1-e_2,\dots,\alpha_{n-1}=e_{n-1}-e_n,\alpha_n=e_{n-1}+e_n)
\end{equation}
and
\begin{equation}
F^C_0=(\alpha_1^\prime=e_1-e_2,\dots,\alpha_{n-1}^\prime
=e_{n-1}-e_n,\alpha_n^\prime=2e_n)
\end{equation}
Then we may define
$\sigma(N_i)=N_i^\prime$ for $i=0,\dots,n-1$ and $\sigma(N_n)=N_n^\prime N_{n-1}^\prime N_n^\prime$.
Using that $(N^\prime_n)^2=1$ one checks easily that this defines an admissible algebra homomorphism
whose image is the invariant algebra of the algebra involution
$\sigma^\mb{c}$ of $\mc{H}(\mc{R}^{C }_{sc},m^C)$(given by
$\sigma^\mb{c}(N_i^\prime)=N_i^\prime$ for $i=0,\dots,n-1$ and $\sigma^\mb{c}(N_n^\prime)=-N_n^\prime$).

Then $\sigma$ is compatible with $\phi$ in the sense that $\sigma$ induces the identity
on the lattice $X$. Hence, as before, the branching correspondence of this embedding
does refine the spectral correspondence of $\phi$.
Observe however that this admissible algebra embedding goes in the
``wrong way'' compared to the situation of spectral coverings of (semi)-standard
Hecke algebras.

We can combine the algebra embedding $\sigma$ with the strict
algebra embedding of $\mc{H}(\mc{R}^{C }_{sc},m^C)$ as a subalgebra of index two
in the algebra $\mc{H}(\textup{C}_n^{(1)},m^0)$, 
corresponding to the embedding of the root lattice the type $\textup{D}_n$  root system 
as a sub lattice of index two in $\mathbb{Z}^n$, and where $m^0$ has $m_-^0=m_+^0=0$ 
(and on the long roots $\alpha$ of $\textup{B}_n$ we have $m^0(\alpha)=1$). 
This amounts to removing the arrow in the spectral diagram
of Figure \ref{nonstand}, and corresponds to a $2:1$ covering STM 
$\mc{H}(\textup{C}_n^{(1)},m^0)\leadsto \mc{H}(\mc{R}^{C }_{sc},m^C)$. 
Altogether we have an admissible algebra embedding
of $\mc{H}(\mc{R}^D_{sc},m^D)$ into $\mc{H}(\textup{C}_n^{(1)},m^0)$ as subalgebra
of index $4$. Notice that the underlying morphism of tori does not represent 
a spectral covering morphism. 

On the other hand, $\mc{H}(\mc{R}^D_{sc},m^D)$ is also embedded as 
subalgebra of index $2$ into the algebra $\mc{H}(\mc{R}^D_{\mathbb{Z}^n},m^D)$, 
by the same embedding of the root lattice of type $\textup{D}_n$ as index two sub lattice of 
$\mathbb{Z}^n$, corresponding to the standard covering STM 
$\mc{H}(\mc{R}^D_{\mathbb{Z}^n},m^D)\leadsto\mc{H}(\mc{R}^D_{sc},m^D)$.
We may complete the diagram to obtain a commuting square of STMs 
by adding the semistandard STM $\mc{H}(\mc{R}^D_{\mathbb{Z}^n},m^D)\leadsto \mc{H}(\textup{C}_n^{(1)},m^0)$ 
represented by $\textup{id}_T$. In turn $\mc{H}(\mc{R}^D_{\mathbb{Z}^n},m^D)$ is contained in 
the maximally extended affine Hecke algebra $\mc{H}(\mc{R}^D_{ad},m^D)$ of type $\textup{D}_n$
as an index two admissible subalgebra,  corresponding to a standard STM 
$\mc{H}(\mc{R}^D_{ad},m^D)\leadsto\mc{H}(\mc{R}^D_{\mathbb{Z}^n},m^D)$. 
Finally we add into the picture the extended affine Hecke algebra $\mc{H}(\mc{R}^B_{ad},m^B)$ 
with $R_0$ of type $\textup{B}_n$, and $X$ equal to the weight lattice $R_0$ (which equals the 
weight lattice of type $\textup{D}_n$), and where $m^{B}(s)=0$ if $s$ is the simple reflection corresponding to 
the short simple root of $R_0$.
We have admissible algebra embeddings of index two of the form 
$\mc{H}(\textup{C}_n^{(1)},m^0)\subset \mc{H}(\mc{R}^B_{ad},m^B)$ and 
$\mc{H}(\mc{R}^D_{ad},m^D)\subset \mc{H}(\mc{R}^B_{ad},m^B)$ corresponding to STMs 
$ \mc{H}(\mc{R}^B_{ad},m^B)\leadsto\mc{H}(\textup{C}_n^{(1)},m^0)$ and 
$\mc{H}(\mc{R}^D_{ad},m^D)\leadsto \mc{H}(\mc{R}^B_{ad},m^B)$, 
which completes the diagram as follows:
\begin{equation}\label{eq:cd}
\begin{tikzcd}
\mc{H}(\mc{R}^D_{ad},m^D)\arrow[rightsquigarrow]{r}{\supset}\arrow[rightsquigarrow]{d}{\cap}&\mc{H}(\mc{R}^D_{\mathbb{Z}^n},m^D)
\arrow[rightsquigarrow]{r}{\supset}\arrow[rightsquigarrow]{d}{\cap}&\mc{H}(\mc{R}^D_{sc},m^D)
\arrow[rightsquigarrow]{d}{\cap}\\
\mc{H}(\mc{R}^B_{ad},m^B)\arrow[rightsquigarrow]{r}{\supset}&\mc{H}(\textup{C}_n^{(1)},m^0)
\arrow[rightsquigarrow]{r}{\supset}
&\mc{H}(\mc{R}^C_{sc},m^C)\\
\end{tikzcd}
\end{equation}
\subsection{Spectral transfer morphisms of rank $0$}
So far we have only considered STMs $\phi$
of co-rank $\textup{cork}(\phi)=0$. In the examples of this kind we
have seen only spectral correspondences arising from the branching
of tempered representations of $\mc{H}$ to admissibly
embedded subalgebras of $\mc{H}^\prime\subset \mc{H}$.
At the opposite other end we have
STMs $\phi$ with $\textup{rk}(\phi)=0$, and
these have a very different behaviour.
Let $\mc{H}^0=\mb{L}$ denote the rank $0$ affine Hecke algebra.
\begin{prop}\label{prop:rk0}
Let $(\mc{H},\tau^d)$ be an arbitrary normalized affine Hecke algebra with
$\mc{H}=\mc{H}(\mc{R},m)$. Let $r\in\textup{Res}(\mc{R},m)$ be
a generic residual point for $(\mc{R},m)$. Let 
$\lambda\in\mathbb{Q}^\times$, and 
define $d^0\in\mb{K}^\times$ by
\begin{equation}\label{eq:cuspdeg}
d^0(\mb{v})=\lambda\mu^{(\{r\})}(\mb{v},r(\mb{v}))
\end{equation}
Then
$d^0\in\mb{M}$, and $\phi_T=r$ defines an STM
$\phi:(\mc{H}^0,\tau^{d^0})\leadsto(\mc{H},\tau^d)$ with $\textup{rk}(\phi)=0$.
Conversely, all STMs $\phi$ to $\mc{H}$ with
$\textup{rk}(\phi)=0$ are of this form.
\end{prop}
\begin{proof}
The fact that $d^0\in\mb{M}$ is clear from Theorem \ref{thm:ratfdeg}.
The fact that $\phi_T=r$ defines an STM is
obvious from the definitions.
\end{proof}
Hence for any $d^0\in\mb{M}$ we have (with $\textup{Res}=\textup{Res}(\mc{R},m)$):
\begin{equation}\label{eq:rk0}
\textup{Hom}_\mf{C}((\mc{H}^0,\tau^{d^0}),(\mc{H},\tau^d))
=\{W_0r\in W_0\backslash\textup{Res}\mid
\exists\lambda\in\mathbb{Q}^\times:d^0=\lambda\mu^{(\{r\})}\}
\end{equation}
and given $\Phi\in\textup{Hom}_\mf{C}((\mc{H}^0,\tau^{d^0}),(\mc{H},\tau^d))$
defined by $r\in\textup{Res}(\mc{R},m)$, the spectral correspondence
$\mf{S}_{12}(\Phi)$ of $\Phi$ is given by (with $\Delta=\Delta(\mc{R},m)$):
\begin{equation}\label{eq:corrk0}
\mf{S}_{12}(\Phi)=\{(\delta^0,\delta)\in\Delta^0\times\Delta\mid gcc(\delta)=W_0r\}
\end{equation}
where $\delta^0\in\Delta^0$ denotes the $\mathbb{R}_{>1}$-family
of characters of $\mc{H}^0=\mb{L}$ defined by $\delta^0_\mb{v}(1)=1$.
In particular, Theorem \ref{thm:corr} reduces in this
special case to Theorem \ref{thm:ratfdeg}(ii).
\begin{rem}
If the rank of $\mc{H}$ is positive and
$\Phi\in\textup{Hom}_\mf{C}((\mc{H}^0,\tau^{d^0}),(\mc{H},\tau^d))$ then
the spectral correspondence $\mf{S}_{12}(\Phi)$ is obviously
not associated to a branching law of a morphism of $\mb{L}$-algebras.
\end{rem}
\subsection{General spectral transfer morphisms}
\subsubsection{Rigidity in $\mf{C}$ and $\mf{C}_{es}$}
The following result states a fundamental rigidity property of general semi-standard 
morphisms of semi-simple objects of category $\mf{C}$, and is a strengthening of Proposition 
\ref{prop:aut}. The proof is similar, and is left to the reader.
\begin{prop}\label{prop:rigid}
A spectral transfer morphism $\Phi: (\mc{H}_1,\tau^{d_1})\leadsto(\mc{H}_2,\tau^{d_2})$ 
with $(\mc{H}_1,\tau^{d_1})$ semi-simple and semi-standard is essentially determined by its image: 
If $\Phi_1,\Phi_2\in\textup{Hom}_\mf{C}((\mc{H}_1,\tau^{d_1}),(\mc{H}_2,\tau^{d_2}))$ and
$\textup{Im}(\Phi_1)=\textup{Im}(\Phi_2)$ then there exists a
$\gamma\in\textup{Aut}_\mf{C}(\mc{H}_1,\tau^{d_1})$ such that $\Phi_2=\Phi_1\circ\gamma$.
Similarly for essentially strict STMs.
\end{prop}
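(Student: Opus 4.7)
The plan is to show that for any two representing maps $\phi_{1,T},\phi_{2,T}$ of $\Phi_1,\Phi_2$ sharing a common image, their ``discrepancy'' defines a spectral automorphism of $(\mc{H}_1,\tau^{d_1})$, using the characterization of such automorphisms in Propositions \ref{prop:musym} and \ref{prop:aut}. First, since $\textup{Im}(\Phi_1)=\textup{Im}(\Phi_2)$ and $\Phi_i=W_{2,0}\circ\phi_{i,T}$, we may replace $\phi_{2,T}$ within its equivalence class by $w\circ\phi_{2,T}$ for a suitable $w\in W_{2,0}$ so that $\phi_{1,T}(T_1)=\phi_{2,T}(T_1)=L_n=L/K_L^n$ for the same residual coset $L\subset T_2$.

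Next, by (T1) and (T2) each $\phi_{i,T}:T_1\to L_n$ is a finite surjective isogeny of algebraic tori over $\mb{L}$, hence lifts to an affine linear isomorphism $D^a\phi_{i,T}:\mf{t}_1\to\mf{t}^L$ at the level of real Lie algebras of the compact forms. By Proposition \ref{prop:excel}(3) both lifts induce the same isomorphism of affine reflection groups $W^{\vee,a}_1\cong W^*\subset N_{W^\vee}(W_J)$, where $W^*$ depends only on $L$. Therefore $\gamma^a:=(D^a\phi_{1,T})^{-1}\circ D^a\phi_{2,T}$ is an affine automorphism of $\mf{t}_1$ that normalizes $W^{\vee,a}_1$ and, because both $\phi_{i,T}$ are defined over $\mb{L}$ and induce lattice maps on character lattices, normalizes $Y_1$. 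Applying (T3) to both $\phi_{i,T}$ yields $\phi_{i,T}^*\mu_2^{(L_n)}=a_i\mu_1$ with $a_i\in\mathbb{C}^\times$, whence $(\gamma^a)^*\mu_1=(a_2/a_1)\mu_1$.

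Finally we invoke the rigidity built into $\mu_1$. An affine automorphism of $\mf{t}_1$ preserving $\mu_1$ up to a scalar and normalizing both $W^{\vee,a}_1$ and $Y_1$ lies in $\textup{Aut}_V(\mu_1,Y_1)$; by Proposition \ref{prop:musym}(ii),(iv) it descends via the exponential to an element of $\textup{Out}_T(\mu_1)$, which by Proposition \ref{prop:aut} is canonically identified with $\textup{Aut}_{\mf{C}_{es}}(\mc{H}_1,\tau^{d_1})$ (the scalar $a_2/a_1$ being forced to $1$ by the same result). This produces $\gamma\in\textup{Aut}_{\mf{C}_{es}}(\mc{H}_1,\tau^{d_1})$ with $\Phi_2=\Phi_1\circ\gamma$, establishing the essentially strict case. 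For the strict case one needs the further normalization $\phi_{1,T}(e)=\phi_{2,T}(e)$, which can be achieved by absorbing an appropriate element of the natural $W_{2,0}(L)/K_L^n$-action on $(T_L\cap L)/K_L^n$ using (T2); this forces $\gamma^a$ to fix the origin and hence $\gamma\in\textup{Out}_T(\mu_1)_e=\textup{Aut}_\mf{C}(\mc{H}_1,\tau^{d_1})$. The main obstacle will be precisely this base-point normalization, which requires verifying that the $W_{2,0}(L)/K_L^n$-action on the finite set of admissible base points is transitive on the relevant orbit; all other ingredients are direct consequences of (T1)--(T3) and the structural results recalled above.
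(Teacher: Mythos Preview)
Your approach is correct and is precisely the one the paper intends: the paper does not give a detailed proof but states that it is elementary, similar to that of Proposition \ref{prop:aut} (paragraph \ref{par:specaut}), and is left to the reader. Your argument---arranging a common image $L_n$, lifting both representatives to affine linear isomorphisms $D^a\phi_{i,T}:\mf{t}_1\to\mf{t}^J$, forming $\gamma^a=(D^a\phi_{1,T})^{-1}\circ D^a\phi_{2,T}$, and then invoking Propositions \ref{prop:musym} and \ref{prop:aut} to identify $\gamma^a$ with an element of $\textup{Out}_T(\mu_1)$ (resp.\ $\textup{Out}_T(\mu_1)_e$)---is exactly that route.

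One point deserves a little more care. Your justification that $\gamma^a$ normalizes $Y_1$ (``because both $\phi_{i,T}$ \dots\ induce lattice maps on character lattices'') is too quick: each $D\phi_{i,T}$ maps $Y_1$ \emph{into} $Y_{L_n}$, but $(D\phi_{1,T})^{-1}$ need not carry $Y_{L_n}$ back into $Y_1$. The intended argument is the one in paragraph \ref{par:specaut}: since $(\gamma^a)^*\mu_1$ is a scalar multiple of $\mu_1$, the map $\gamma^a$ permutes the $\mu_1$-mirrors and hence normalizes the affine reflection group $W(R_{1,m}^{(1)})$ and its translation lattice $Q(R_{1,m}^\vee)$; this forces $|\det D\gamma^a|=1$, and then one concludes as in Proposition \ref{prop:musym}(ii),(iv). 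Your identification of the base-point normalization in the strict case as the remaining point requiring verification is also in line with the paper's treatment, which subsumes this under ``similar to the above'' without spelling it out.
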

\subsubsection{Examples of general STMs of intermediate rank}
We refer the reader to \cite{Opd4}. There we describe the structure of the spectral 
transfer category of unipotent affine Hecke algebras in detail, where 
STMs of intermediate rank are abundant. 
\section{A partial ordering of normalized affine Hecke algebras}\label{subsub:isogpo}
In this final section we define a partial ordering on the set of certain equivalence classes 
(called spectral isogeny classes) of 
normalized affine Hecke algebras.
These notions will play an important role in the study of STMs of unipotent 
affine Hecke algebras \cite{{Opd4}}.
\begin{defn}\label{defn:isog}
We say that $(\mc{H}_1,d_1)$ and $(\mc{H}_2,d_2)$ are \emph{spectrally isogenous} 
if there exist essentially strict STMs $\phi:(\mc{H}_1,d_1)\leadsto(\mc{H}_2,d_2)$ and 
$\psi:(\mc{H}_2,d_2)\leadsto(\mc{H}_1,d_1)$.
\end{defn}
Clearly this is an equivalence relation. 
Let us denote by $[(\mc{H},d)]$ the spectral isogeny class of a normalized affine Hecke algebra. 
There exists a canonical partial ordering on the set of spectral isogeny classes:
\begin{defn}
We say that $[(\mc{H}_2,\tau_2)]$ is \emph{lower} than $[(\mc{H}_1,\tau_1)]$, notation $[(\mc{H}_2,,\tau_2)]
\gtrsim[(\mc{H}_1,\tau_1)]$
(or equivalently, $[(\mc{H}_1,,\tau_1)]$ is \emph{higher} than $[(\mc{H}_2,\tau_2)]$, denoted by 
$[(\mc{H}_1,\tau_1)]\lesssim[(\mc{H}_2,\tau_2)]$)  
if there exists a spectral transfer morphism $\phi:(\mc{H}_1,\tau_1)\leadsto (\mc{H}_2,\tau_2)$. 
This defines a partial ordering on the set of spectral isogeny classes of normalized affine Hecke algebras.
\end{defn}
In most cases, the underlying affine Hecke algebras of the normalized affine Hecke algebras in a 
spectral isogeny class are isomorphic as affine Hecke algebras. 
Exceptions to this are of a trivial nature, caused by the fact that we do not assume that 
normalized affine Hecke algebras under consideration are semisimple and semi standard. 
The spectral isogeny classes of Lusztig's unipotent affine Hecke algebras as they arise   
in the theory of unramified almost simple algebraic groups defined over a local field 
are always simply equal to  their isomorphism classes
(see Proposition \ref{prop:rigid1} below). We will just write $\mc{H}_1\lesssim\mc{H}_2$
instead of $[(\mc{H}_1,,\tau_1)]\lesssim[(\mc{H}_2,\tau_2)]$.
\begin{prop}\label{prop:rigid1}
Let $(\mc{H}_1,d_1)$ and $(\mc{H}_2,d_2)$ be \emph{spectrally isogenous} normalized affine 
Hecke algebras with essentially strict STMs $\phi_1:(\mc{H}_1,d_1)\leadsto(\mc{H}_2,d_2)$ and 
$\phi_2:(\mc{H}_2,d_2)\leadsto(\mc{H}_1,d_1)$. Then $\phi_1$ and $\phi_2$ are spectral 
covering maps, and if $\mc{H}_1$ or $\mc{H}_2$ is semisimple and semi-standard then 
$\phi_i$ ($i=1,2$) both are essentially strict spectral isomorphisms. Hence the spectral isogeny 
classes of semisimple, semi-standard normalized affine Hecke algebras are simply their 
isomorphisms classes in $\mf{C}_{es}$. The same holds true for  
$\mc{H}(\mc{R}^B_{ad},m^B)$, $\mc{H}(\textup{C}_n^{(1)},m^0)$, and  
$\mc{H}(\mc{R}^{C}_{sc},m^C)$. 
\end{prop}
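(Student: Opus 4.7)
The plan is to first reduce to the case of spectral covering maps via a dimension count, and then exploit the rigidity of essentially strict endomorphisms provided by Proposition \ref{prop:aut}. I would observe that for any STM $\phi \colon \mc{H} \leadsto \mc{H}'$ with image $L_n$, one has $\dim T = \dim L_n = \dim L \le \dim T'$ by finiteness of $\phi_T$. Applying this to both $\phi_1$ and $\phi_2$ yields $\dim T_1 = \dim T_2$, forcing $L$ in each case to be a coset of a subtorus of full dimension, hence the entire target torus (for which $K_L^n = \{e\}$). Thus $\textup{cork}(\phi_i) = 0$ and both are spectral covering morphisms.

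Next, assume without loss of generality that $(\mc{R}_1, m_1)$ is semisimple and semi-standard. The finite \'etale cover $\phi_{1,T} \colon T_1 \to T_2$ induces a linear isomorphism $V_1 \simeq V_2$, and by (T3) the pullback of $\mu_2$ agrees with $\mu_1$ up to a scalar, so the $\mu$-mirror arrangements in $V_1$ and $V_2$ are matched. Semi-standardness of $(\mc{R}_1, m_1)$ manifests (by Proposition \ref{prop:musym}(i)) as the presence of a $\mu$-mirror perpendicular to each root direction of $R_{1,0}$, a feature that therefore transfers across to show $(\mc{R}_2, m_2)$ is also semisimple and semi-standard.

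With both $\mc{H}_i$ semisimple and semi-standard, and both $\phi_i$ covering morphisms, I would invoke the classification of spectral coverings from the previous subsection: up to replacement of $\mc{H}_1$ by an isomorphic object, $\phi_1$ corresponds to a sublattice inclusion $X_2 \hookrightarrow X_1$ of maximal rank with an admissible algebra embedding $\mc{H}_2 \hookrightarrow \mc{H}_1$, and symmetrically $\phi_2$ yields $X_1 \hookrightarrow X_2$. The composition $\phi_2 \circ \phi_1$ is then an essentially strict spectral endomorphism of $(\mc{H}_1, d_1)$, which by Proposition \ref{prop:aut} is an automorphism. This forces both lattice inclusions to have index one, whence each $\phi_i$ is an essentially strict spectral isomorphism.

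The final claim about $\mc{H}(\mc{R}^B_{ad}, m^B)$, $\mc{H}(\textup{C}_n^{(1)}, m^0)$, and $\mc{H}(\mc{R}^C_{sc}, m^C)$ would be handled using the diagram (\ref{eq:cd}), which connects each of these non-semi-standard algebras to semi-standard ones via essentially strict covering STMs. Any spectral isogeny involving these algebras can be composed with those covering maps to produce isogenies between semi-standard semisimple objects, and the rigidity established above then propagates back. I expect the main obstacle to be the transfer of semi-standardness under a covering map in step two, since the precise correspondence of root directions (not just of the arrangements of hyperplanes) requires pulling apart the geometric description of $\mu$-mirrors in Proposition \ref{prop:musym} and using that $\phi_{1,T}^{\ast}(\mu_2) = a \mu_1$ aligns not only the mirror arrangements but also the lattices $Y_1, Y_2$ governing their translation symmetries.
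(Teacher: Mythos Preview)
Your approach is correct and matches the paper's: the core argument in both is to compose $\phi_2\circ\phi_1$ to obtain an essentially strict spectral endomorphism of the semisimple semi-standard object and invoke Proposition~\ref{prop:aut}. The paper's proof says exactly this in one sentence, and treats the listed non-semi-standard cases via diagram~(\ref{eq:cd}) just as you propose.

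One simplification: your step two (transferring semi-standardness to $(\mc{R}_2,m_2)$) and the subsequent appeal to the classification of spectral coverings are not needed, and you can drop the part you flag as the ``main obstacle'' entirely. Once the dimension count shows both $\phi_i$ are covering maps, apply Proposition~\ref{prop:aut} directly to $\phi_2\circ\phi_1$ as an endomorphism of $(\mc{H}_1,d_1)$ (which \emph{is} assumed semi-standard). The composition is then represented by an automorphism of $T_1$, so the product of the degrees of the finite surjections $\phi_{1,T}$ and $\phi_{2,T}$ equals~$1$, forcing each to be an isomorphism of tori. Since $\gamma:=\phi_2\circ\phi_1$ is invertible in $\mf{C}_{es}$, the STM $\gamma^{-1}\circ\phi_2$ is a left inverse to $\phi_1$; and because $\phi_{1,T}$ is an isomorphism of tori, $\phi_1$ is epic in $\mf{C}_{es}$, so this left inverse is two-sided. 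No hypothesis on $(\mc{R}_2,m_2)$ is required.
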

\begin{proof}
This is an easy consequence of the fact that for semisimple, semi-standard 
affine Hecke algebras, spectral transfer endomorphisms are 
in fact automorphisms, according to Proposition \ref{prop:aut}. For the 
listed non-semi-standard cases the spectral endomorphisms are also 
automorphisms, as follow easily from (\ref{eq:cd}). 
\end{proof}
{\bf Nota bene}: The notion of spectral isogeny of normalized affine Hecke algebras is obviously very 
different from the notion of isogeny of the underlying root data, as we saw in Proposition \ref{prop:rigid1}.
The main reason for introducing this notion is the importance for unipotent affine Hecke algebras 
(in \cite{Opd4})  
of the partial ordering on the spectral isogeny 
classes.  
Also observe from (\ref{eq:cd}) that this partial ordering 
is not related to embeddings of algebras.

\end{document}